\documentclass[11pt]{article}

\usepackage[a4paper,margin=1in]{geometry}
\usepackage{amsmath,amssymb,amsthm,mathtools}
\usepackage{mathrsfs}
\usepackage{aliascnt}
\usepackage{microtype}
\usepackage{hyperref}
\usepackage[nameinlink,capitalize,noabbrev]{cleveref}

\hypersetup{
  colorlinks=true,
  linkcolor=blue,
  citecolor=red,
  urlcolor=blue,
  pdftitle={A Coulomb-Corrected Labeled Energy and Growth Estimates for the Vlasov–Poisson System},
  pdfauthor={Zhaopeng Wang}
}

\numberwithin{equation}{section}
\allowdisplaybreaks[2]

\theoremstyle{plain}
\newtheorem{theorem}{Theorem}[section]

\newaliascnt{proposition}{theorem}
\newtheorem{proposition}[proposition]{Proposition}
\aliascntresetthe{proposition}

\newaliascnt{lemma}{theorem}
\newtheorem{lemma}[lemma]{Lemma}
\aliascntresetthe{lemma}

\newaliascnt{corollary}{theorem}

\aliascntresetthe{corollary}

\theoremstyle{definition}
\newaliascnt{definition}{theorem}

\aliascntresetthe{definition}

\theoremstyle{remark}
\newaliascnt{remark}{theorem}
\newtheorem{remark}[remark]{Remark}
\aliascntresetthe{remark}

\crefname{theorem}{Theorem}{Theorems}
\crefname{proposition}{Proposition}{Propositions}
\crefname{lemma}{Lemma}{Lemmas}
\crefname{corollary}{Corollary}{Corollaries}
\crefname{definition}{Definition}{Definitions}
\crefname{remark}{Remark}{Remarks}
\Crefname{theorem}{Theorem}{Theorems}
\Crefname{proposition}{Proposition}{Propositions}
\Crefname{lemma}{Lemma}{Lemmas}
\Crefname{corollary}{Corollary}{Corollaries}
\Crefname{definition}{Definition}{Definitions}
\Crefname{remark}{Remark}{Remarks}

\newcommand{\R}{\mathbb{R}}
\newcommand{\dd}{\,\mathrm d}
\newcommand{\supp}{\operatorname{supp}}
\newcommand{\one}{\mathbf 1}
\newcommand{\abs}[1]{\left|#1\right|}
\newcommand{\norm}[1]{\left\lVert#1\right\rVert}
\newcommand{\cH}{\mathcal H}
\newcommand{\cJ}{\mathcal J}
\newcommand{\cD}{\Delta}
\newcommand{\LE}{\mathscr E}
\newcommand{\cG}{\mathcal G}
\newcommand{\cB}{\mathcal B}
\newcommand{\cU}{\mathcal U}
\newcommand{\tw}{\widetilde w}
\newcommand{\ta}{\widetilde a}
\newcommand{\eps}{\varepsilon}
\newcommand{\tscale}[1]{(1+#1)}
\newcommand{\tlog}[1]{\log(2+#1)}
\newcommand{\Tlog}{\log(2+T)}

\title{A Coulomb-Corrected Labeled Energy and Growth Estimates for the Vlasov–Poisson System}

\author{
Zhaopeng Wang\\[0.4em]
\small School of Mathematics and Statistics\\
\small Beijing Institute of Technology\\
\small Beijing 100081, China\\[0.3em]
\small\texttt{zpwang@bit.edu.cn}
}
\date{}

\begin{document}
\maketitle

\begin{abstract}
This paper introduces a Coulomb-corrected labeled energy for smooth compactly
supported solutions of the three-dimensional repulsive Vlasov--Poisson
system. The resulting identity replaces the direct electric-field term along
a characteristic by a weighted singular integral, which can be estimated
more effectively. Thus this structure yields improved bounds on the velocity support and the decay
of the electric field.
\end{abstract}

\noindent\textbf{Keywords.} Vlasov--Poisson system; velocity support;
growth estimates.

\medskip
\noindent\textbf{2020 Mathematics Subject Classification.} 35Q83, 35B40,
82C40.

\section{Introduction}\label{sec:introduction}

\subsection{Background and previous results}\label{subsec:background}

The Vlasov--Poisson system is a basic kinetic model for a collisionless ensemble
of particles interacting through a self-consistent field. We study the
three-dimensional plasma, or repulsive, case in the whole space: particles of
the same charge repel one another, and the resulting cloud tends to disperse.
At the mathematical level, the phase-space density is transported along
characteristics, while its spatial density generates a nonlocal Coulomb field
with an inverse-square singularity. More precisely, \(f=f(t,x,v)\ge0\)  denotes the phase-space density of particles at time
\(t\in\R^+\), position \(x\in\R^3\), velocity \(v\in\R^3\) and solves
\begin{equation}\label{eq:VP}
\begin{aligned}
 &\partial_t f+v\cdot\nabla_x f+E\cdot\nabla_v f=0,
 \qquad f|_{t=0}=f_0,\\
 &\rho(t,x)=\int_{\R^3}f(t,x,v)\dd v,\\
 &E(t,x)=\frac1{4\pi}\int_{\R^3}
 \frac{x-y}{\abs{x-y}^3}\rho(t,y)\dd y.
\end{aligned}
\end{equation}
We denote the positive Coulomb potential by
\[
 U(t,x):=\frac1{4\pi}\int_{\R^3}
 \frac{\rho(t,y)}{\abs{x-y}}\dd y,
 \qquad -\Delta_xU=\rho,
 \qquad E=-\nabla_xU.
\]
Thus \(U\geq0\), while the relation \(E=-\nabla_xU\) gives the repulsive Coulomb force. The repulsive structure provides a positive interaction energy and strong dispersive estimates, but these estimates are essentially averaged and do not directly control the fastest characteristics in the support. Since the Coulomb force is both long ranged and singular, obtaining a uniform bound on the velocity support from such dispersive information remains a delicate problem.

Throughout the paper, the initial datum satisfies
\begin{equation}
 0\le f_0\in C_c^1(\R_x^3\times\R_v^3). \label{eq:data}
\end{equation}
Let \(f\) denote the corresponding global classical solution. Global
classical solvability in three dimensions for general smooth, compactly
supported data follows from the foundational works of Lions and Perthame,
Pfaffelmoser, and Schaeffer; see
\cite{LionsPerthame,Pfaffelmoser,SchaefferGlobal} and the survey
\cite{ReinSurvey}.

For \(t\ge0\), define the velocity-support radius
\[
 R_f(t):=\sup\bigl\{\abs{v}:(x,v)\in\supp f(t)\bigr\}
\]
and the self-similar velocity
\[
 w(t,x,v):=v-\frac{x}{1+t},
 \qquad
 \theta(t):=\sup_{(x,v)\in\supp f(t)}\abs{w(t,x,v)}.
\]
For the trivial solution, we set both suprema equal to zero; the theorem is
then immediate. Henceforth, assume \(f_0\not\equiv0\). The large-time growth
of \(R_f\) has been studied through a sequence of refinements of the
characteristic method; see, among others,
\cite{ChenLi,ChenZhang,Horst,PallardNote,PallardKRM,PallardJDE,ReinGrowth}.
A central difficulty is that a pointwise field estimate does not capture the
geometry of source characteristics that remain close to a fast target
characteristic. By combining dispersive information with an adaptive
phase-space crossing argument, Chen and Li \cite{ChenLi} obtained
\begin{equation}
 R_f(t)+\theta(t)
 \le C(1+t)^{2/15}\log^{8/15}(2+t). \label{eq:chen-li-support}
\end{equation}
More precisely, their theorem gives the stated bound for \(R_f\); the same
bound for \(\theta\) follows from the characteristic equation and the compactness of the initial support.

Self-similar dispersive identities and global Lyapunov functionals for the
repulsive system have a substantial history; see, for example,
\cite{ChaeHa,IllnerRein,Perthame}. These global structures provide important information on the averaged dispersive behavior of the solution. However, the velocity-support problem is essentially pointwise: it requires control of the fastest characteristics in the support, and such control does not follow directly from global energy or moment estimates.

The purpose of the present paper is to develop a characteristic-level refinement of these dispersive structures and use it to improve the estimate \eqref{eq:chen-li-support}. More precisely, we introduce a Coulomb-corrected labeled energy associated with an individual characteristic. This quantity is designed to organize the repulsive interaction in a form better suited to the control of extreme velocities. It allows the dispersive information of the solution to be combined more effectively with the geometry of nearby characteristics, leading to a sharper growth estimate for the velocity support and, consequently, to an improved decay estimate for the electric field.

\subsection{Main result}\label{subsec:main-result}

The main theorem gives simultaneous control of the velocity-support radius
and the electric field.

\begin{theorem}[Velocity support and field decay]\label{thm:main}
Let \(f\) be the classical solution of \eqref{eq:VP} corresponding to an
initial datum satisfying \eqref{eq:data}. Then there exists a constant
\(C>0\), depending only on \(f_0\), such that for every \(t\ge0\),
\begin{equation}
 R_f(t)\le C(1+t)^{2/21}\log^{4/21}(2+t). \label{eq:main-bound}
\end{equation}
Moreover,
\begin{equation}
 \norm{E(t)}_{L^\infty(\R^3)}
 \le C(1+t)^{-13/63}\log^{16/63}(2+t). \label{eq:field-decay}
\end{equation}
\end{theorem}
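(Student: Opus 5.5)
The plan is to run a bootstrap on the quantity $\theta(t)$, with the labeled energy of a single characteristic doing the main work. Fix a characteristic $(X(s),V(s))$ in the support and set $W(s)=V(s)-X(s)/(1+s)$. I will work with the self-similar labeled energy
\[
\LE(s)=\tfrac12(1+s)^2\abs{W(s)}^2+(1+s)\,U(s,X(s)).
\]
The direct field term $(1+s)^2 W\cdot E$ that appears in $\frac{d}{ds}\tfrac12(1+s)^2\abs{W}^2$ is exactly cancelled by the $-(1+s)V\cdot E$ piece of $\frac{d}{ds}\bigl[(1+s)U(s,X)\bigr]$. What is left is
\[
U(s,X)+(1+s)\,\partial_tU(s,X)-X\cdot E(s,X).
\]
Computing $\partial_tU$ through the continuity equation $\partial_t\rho=-\nabla\cdot j$ and rewriting in self-similar variables, the remainder becomes a weighted singular integral
\[
\frac{1+s}{4\pi}\int\!\!\int \frac{(X-y)\cdot\bigl(w(s,y,u)\bigr)}{\abs{X-y}^3}f(s,y,u)\dd u\dd y,
\]
plus harmless terms. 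This is the promised identity: the field is now tested against the relative self-similar velocity of the source particles. Since $U\ge0$, integrating in $s$ gives
\[
(1+t)^2\abs{W(t)}^2\lesssim 1+\int_0^t (1+s)\,\abs{\mathcal I(s)}\dd s,
\]
where $\mathcal I$ is the weighted integral.

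To estimate $\mathcal I(s)$ I split the $y$-integral at a radius $\delta$.
\begin{itemize}
\item \emph{Far part.} Use the dispersive identity of Illner–Rein/Perthame, $\int(1+s)^2\abs{w}^2f\lesssim 1$, and $\norm{f}_\infty$. Interpolating $\int\abs{w}f\dd u$ against $\rho$ gives $\norm{\rho}_{5/3}\lesssim(1+s)^{-6/5}$ and $\norm{\int\abs{w}f\dd u}_{L^{5/4}}\lesssim (1+s)^{-1-\ldots}$. The far kernel $\abs{X-y}^{-2}\one_{\abs{X-y}>\delta}$ then gives a $\delta$-dependent power.
\item \emph{Near part.} Do not bound it pointwise. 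Integrate it in time along the characteristic and use the Pfaffelmoser/Schaeffer/Chen–Li phase-space crossing decomposition. Source characteristics with relative velocity $\abs{V-u}$ large spend time at most $\sim\abs{X-y}/\abs{V-u}$ near the target. Sources with small relative velocity lie in a set of $u$-volume $\lesssim\min(\theta,\text{small})^3$ and are controlled by $\norm{f}_\infty$. Here the extra factor $\abs{w}\le\theta(s)$ in the integrand, replacing the bare Coulomb kernel, is the gain over Chen–Li.
\end{itemize}

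Optimizing $\delta$ and the slow/fast velocity thresholds on a time interval $[t-\Delta,t]$ should give a closed inequality of the form
\[
\theta(t)^2\lesssim 1+(1+t)^{-a}\Tlog^{b}\,\theta(t)^{c}
\]
on any bootstrap interval where $\theta$ is increasing. Solving it yields
\[
\theta(t)\lesssim(1+t)^{2/21}\Tlog^{4/21}.
\]
Since $\abs{X(t)}\le C(1+t)$ (because $\abs{V}\le R_f$ and $R_f$ grows sublinearly), we have $\abs{V}\le\abs{W}+\abs{X}/(1+t)$. That only gives $R_f\le\theta+C$ if $\abs{X}/(1+t)$ is bounded, and it is, because the same energy bounds $\abs{X(t)}\lesssim 1+t$. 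This gives \eqref{eq:main-bound}.

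For \eqref{eq:field-decay}, the support of $f(t,x,\cdot)$ lies in a ball of radius $\theta$ around $x/(1+t)$. Hence $\rho\lesssim\theta^3$, and also $\rho\lesssim\theta^{3/5}\cdot$(the local self-similar energy)$^{\ldots}$. Interpolating $\norm{E}_\infty\lesssim\norm{\rho}_\infty^{1/3}\norm{\rho}_{p}^{\ldots}$ with $\norm{\rho}_{5/3}\lesssim(1+t)^{-6/5}$ and inserting the $\theta$ bound produces the exponent $-13/63$ and the power $16/63$ of the logarithm. I will check that arithmetic with a standard $\norm{E}_\infty\lesssim\norm{\rho}_\infty^{1-\alpha}\norm{\rho}_{p}^{\alpha}$ split.

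The main obstacle is the near-field estimate for the singular integral. I need to show that the $\abs{w}$ weight together with the crossing-time argument genuinely gains over the bare field, uniformly in how long slow source particles stay near the target. I would first attempt the Chen–Li adaptive crossing argument with their parameters re-optimized for the extra $\theta$ weight, and check whether the exponent $2/21$ comes out.
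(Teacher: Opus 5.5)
You have a genuine gap: the estimate that carries the whole theorem, a time-integrated bound for the weighted current $\cJ_*$ with an explicit rate, is never produced. You write that the optimization ``should give'' an inequality with unspecified $a,b,c$, and that you would ``check whether the exponent $2/21$ comes out''. In the paper this step is all of Section~3 plus the optimization, and it needs two ideas your sketch lacks.

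First, the near/crossing split must be built from the source weight. The near radius is $r=A/\bigl(\abs{a}(\tscale{s}^{-1}+\abs{w}^2)d\bigr)$, where $d$ is the five-branch local time taken from the adaptive-impulse lower bound $\Delta(t,\lambda)\gtrsim\min\{t,\lambda^2\tscale{t}/\tlog{t},\lambda\tscale{t}^{5/7}/\tlog{t}^{4/7}\}$; this is where $5/7$ and $4/7$ enter. Geometric crossing plus Schaeffer's one-sided extension then bound each source's crossing integral $\int\one_{\cU}\abs{Z}^{-2}\dd s$ by $\frac CA\int(\tscale{s}^{-1}+\abs{W}^2)\dd s$. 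After bounding the current weight by $\sqrt2Q$, the moment bound $\iint\abs{w}^2f\lesssim\tscale{s}^{-1}$ sums this to $C\delta Q/(A\tscale{T})$. The near region contributes at leading order $CA\delta\Tlog^{4/7}\tscale{T}^{-5/7}$, with $\abs{w}$ kept inside the radial integral. Balancing at $A=Q^{1/2}\tscale{T}^{-1/7}\Tlog^{-2/7}$ gives the rate $Q^{1/2}\tscale{T}^{-6/7}\Tlog^{2/7}$. Accumulating it over disjoint record intervals gives $\cH^{3/4}\lesssim\tscale{t}^{1/7}\tlog{t}^{2/7}$, hence $2/21$.

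The gain is also not ``$\abs{w}\le\theta$''. Bounding the weight by $\theta$ everywhere just reproduces the bare-field bound $\abs{w_*}\abs{E}$. The gain is that source weights are small compared with $Q$ on average, and it becomes visible only through this weighted radius and the second moment.

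Second, the near-region integrals close only because of the separation $\abs{a}+\abs{w}\ge cQ$ there, which requires $\abs{w_*(s)}\asymp Q$ on the whole interval. The paper enforces this by taking the target at a record time $T$ of the running envelope $\cH$ and capping the interval at $\delta=\min\{T/5,\Delta(T,\kappa Q)\}$, so that the impulse bound keeps $\abs{w_*(s)}\asymp Q$. Your scheme integrates along one fixed characteristic over $[0,t]$ and never says how the target and intervals are chosen. With no lower bound on $\abs{w_*(s)}$, the quadratic $w$-branch, for instance, degrades from $A\Tlog^2/(Q\tscale{T})$ to $A\Tlog\,\tscale{T}^{-1/2}$. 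Optimizing then gives an exponent worse than Chen--Li's $2/15$.

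Several inputs are also wrong as stated.
\begin{itemize}
\item The field terms cancel only if the potential carries the weight $\tscale{s}^2$. With $\frac12\tscale{s}^2\abs{W}^2+\tscale{s}U$, a term $s\tscale{s}V\cdot E(s,X)$ survives. By contrast, $\frac{\dd}{\dd s}\bigl[\tscale{s}^2(\frac12\abs{W}^2+U)\bigr]=\tscale{s}U+\tscale{s}^2\cJ_*$, which is $\tscale{s}^2$ times the paper's identity. The extra term integrates to $O(\tscale{t}^{3/2})$ rather than $O(1)$, which is harmless.
\item Your dispersive rates are not available. The Illner--Rein/Perthame functional $\iint\abs{x-tv}^2f+t^2\norm{E}_2^2$ is only $O(t)$, so one has $\iint\abs{w}^2f\lesssim\tscale{s}^{-1}$ and $\norm{\rho}_{5/3}\lesssim\tscale{s}^{-3/5}$, not $\tscale{s}^{-2}$ and $\tscale{s}^{-6/5}$. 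With your rate, $\norm{E}_\infty\lesssim\norm{\rho}_{5/3}^{5/9}\norm{\rho}_\infty^{4/9}$ would give the exponent $-34/63$. With the correct rate it gives $\tscale{t}^{-1/3}R_f^{4/3}$, hence exactly $-13/63$ and $16/63$.
\item The bound $\abs{X(t)}\le C(1+t)$ does not follow from sublinear growth of $R_f$, since $\int_0^tR_f$ is superlinear once $R_f\to\infty$. Nothing in your argument bounds $\abs{X}/(1+t)$. Instead, integrate $\frac{\dd}{\dd t}\frac{X}{1+t}=\frac{W}{1+t}$ to get $\abs{X}/(1+t)\lesssim\tscale{t}^{2/21}\tlog{t}^{4/21}$, which suffices for the bound on $R_f$.
\end{itemize}
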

\begin{remark}
The exponent \(2/21\) improves the exponent \(2/15\) in
\eqref{eq:chen-li-support}. Once the support bound is known, the field decay
follows from the standard Coulomb interpolation estimate. Thus the main
analysis is concentrated in the first conclusion of \cref{thm:main}. To the best of our knowledge, \eqref{eq:main-bound} gives the best currently known growth estimate for the velocity support for the three-dimensional repulsive
Vlasov--Poisson system with general smooth, compactly supported initial data.
\end{remark}

\subsection{The labeled-energy mechanism and proof strategy}
\label{subsec:strategy}
For a characteristic \((X_*(t),V_*(t))\), write
\(w_*(t):=V_*(t)-X_*(t)/(1+t)\), the restriction of \(w\) to that
characteristic. Define the labeled energy at a phase point by
\begin{equation}
 \LE(t,x,v):=\frac12\abs{w(t,x,v)}^2+U(t,x).
 \label{eq:labeled-energy-def}
\end{equation}
Its value along the selected characteristic will be denoted by
\[
 \LE_*(t):=\LE(t,X_*(t),V_*(t))
 =\frac12\abs{w_*(t)}^2+U(t,X_*(t)).
\]
The point of this definition is the exact identity
\begin{equation}
 \frac{\dd}{\dd t}\LE_*(t)
 +\frac{\abs{w_*(t)}^2+U(t,X_*(t))}{1+t}
 =\cJ_*(t), \label{eq:intro-identity}
\end{equation}
where
\begin{equation}
 \cJ_*(t):=\frac1{4\pi}\iint_{\R^3\times\R^3}
 w(t,x,v)\cdot
 \frac{X_*(t)-x}{\abs{X_*(t)-x}^3}
 f(t,x,v)\dd x\dd v. \label{eq:J-def}
\end{equation}
Indeed, the derivative of the self-similar kinetic energy produces
\(w_*(t)\cdot E(t,X_*(t))\), whereas the derivative of the Coulomb correction
produces the same term with the opposite sign. The bare force is therefore
replaced by the weighted current \(\cJ_*\). This cancellation is the
structural starting point of the proof.

The analytic part of the argument keeps the factor \(w\) inside the singular
integral until the final radial integration. In the quadratic \(w\)-branch
this removes an inverse-cutoff loss; in the linear \(w\)-branch it removes an
unnecessary logarithmic loss. To select the target characteristic
consistently, only one additional global quantity is needed: the running
maximum of the full labeled energy,
\begin{equation}
 \cH(t):=
 \max_{\substack{0\le s\le t\\(x,v)\in\supp f(s)}}
 \left\{\frac12\abs{w(s,x,v)}^2+U(s,x)\right\}. \label{eq:envelope-def}
\end{equation}
At a record time \(T\), a target characteristic realizes the maximum in
\eqref{eq:envelope-def}; set \(Q=\cH(T)^{1/2}\). On an adaptive record interval
\([T-\delta,T]\), the weighted
\(\cG\)--\(\cB\)--\(\cU\) decomposition developed in
\cref{sec:current}, followed by the one-parameter optimization in
\cref{sec:conclusion}, gives
\[
 \int_{T-\delta}^{T}\abs{\cJ_*(s)}\dd s
 \le C\delta\cH(T)^{1/4}
 (1+T)^{-6/7}\log^{2/7}(2+T).
\]
Here the construction of \(\delta\) uses the adaptive-time lower bound
recorded by Chen and Li \cite{ChenLi}. Combining this estimate with
\eqref{eq:intro-identity} and iterating over
those record intervals gives
\[
 \cH(t)\le C(1+t)^{4/21}\log^{8/21}(2+t),
\]
which implies \eqref{eq:main-bound}.

\paragraph{Notation and organization.}
The letter \(C\) denotes a positive constant that may change from line to
line and depends only on \(f_0\) and fixed numerical choices. Constants with
subscripts remain fixed within the corresponding arguments. For two positive
quantities, \(A\asymp B\) means that \(A/B\) is bounded above
and below by positive constants of this kind. The symbol
\(\one_{\mathcal A}\) denotes the indicator of a set \(\mathcal A\).

\Cref{sec:preliminaries} develops the full framework needed later: the exact
labeled-energy identity, the standard dispersive and potential estimates, the
adaptive impulse scale, the crossing tools, and the running record envelope.
Its role is therefore preparatory rather than the proof of a single
proposition. By contrast, \cref{sec:current,sec:conclusion} state their main
outputs at the beginning. \Cref{sec:current} proves the weighted-current
estimate \cref{prop:local-current} by the
\(\cG\)--\(\cB\)--\(\cU\) decomposition, while
\cref{sec:conclusion} closes the labeled-energy envelope by optimization and
record-time iteration and then deduces \cref{thm:main}.

\section{The labeled-energy framework}\label{sec:preliminaries}

This section develops all ingredients needed to formulate and iterate the
labeled energy on a record interval. We first prove the exact identity that
replaces the electric field along a target characteristic by the weighted
singular current \(\cJ_*\). We then collect the dispersive and potential
bounds, recall the adaptive impulse estimate, establish the uniform reduction
of the source-dependent time scale, record the geometric crossing tools, and
construct the running labeled-energy envelope. Later sections use these
ingredients together.

Let
\[
 (X(s;t,x,v),V(s;t,x,v))
\]
be the characteristic ending at \((x,v)\) at time \(t\):
\begin{equation*}
 \dot X(s)=V(s),
 \qquad
 \dot V(s)=E(s,X(s)),
 \qquad
 (X(t),V(t))=(x,v).
\end{equation*}

When a characteristic is selected as the target, we write it as
\((X_*,V_*)\) and use the notation introduced in
\eqref{eq:labeled-energy-def}--\eqref{eq:J-def}.

\subsection{The exact labeled-energy identity}

\begin{proposition}[Exact labeled-energy identity]\label{prop:labeled-identity}
Along every characteristic \((X_*,V_*)\),
\begin{equation}
 \frac{\dd}{\dd t}\LE_*(t)
 +\frac{\abs{w_*(t)}^2+U(t,X_*(t))}{\tscale{t}}
 =\cJ_*(t). \label{eq:labeled-identity}
\end{equation}
In particular,
\begin{equation}
 \frac{\dd}{\dd t}\LE_*(t)
 +\frac{\LE_*(t)}{\tscale{t}}
 \le\abs{\cJ_*(t)}. \label{eq:labeled-inequality}
\end{equation}
\end{proposition}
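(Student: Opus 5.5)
The plan is a direct computation along the characteristic \((X_*,V_*)\), done separately for the two pieces of \(\LE_*\). The electric-field contributions from the two pieces will cancel. Afterwards the inequality \eqref{eq:labeled-inequality} follows from \(U\ge0\).

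\emph{Kinetic part.} Since \(w_*=V_*-X_*/\tscale{t}\), the characteristic equations give
\[
 \dot w_*=E(t,X_*)-\frac{V_*}{\tscale{t}}+\frac{X_*}{\tscale{t}^2}
 =E(t,X_*)-\frac{w_*}{\tscale{t}}.
\]
Hence \(\frac{\dd}{\dd t}\tfrac12\abs{w_*}^2=w_*\cdot E(t,X_*)-\abs{w_*}^2/\tscale{t}\).

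\emph{Potential part.} By the chain rule,
\(\frac{\dd}{\dd t}U(t,X_*)=\partial_tU(t,X_*)+V_*\cdot\nabla_xU(t,X_*)=\partial_tU(t,X_*)-V_*\cdot E(t,X_*)\).

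To compute \(\partial_tU\), integrate the Vlasov equation in \(v\). This gives the continuity equation \(\partial_t\rho+\nabla_x\cdot j=0\) with \(j=\int vf\dd v\). Differentiate \(U\) under the integral and integrate by parts in \(y\):
\[
 \partial_tU(t,x)=\frac1{4\pi}\int j(t,y)\cdot\frac{x-y}{\abs{x-y}^3}\dd y .
\]
Now split \(v=w+y/\tscale{t}\) inside \(j\). The \(w\)-part is exactly \(\cJ_*(t)\) when \(x=X_*\). For the other part, use \(y=X_*-(X_*-y)\), so that
\[
 y\cdot\frac{X_*-y}{\abs{X_*-y}^3}=X_*\cdot\frac{X_*-y}{\abs{X_*-y}^3}-\frac1{\abs{X_*-y}} .
\]
Integrating against \(\rho/(4\pi)\), this part contributes \(\bigl(X_*\cdot E(t,X_*)-U(t,X_*)\bigr)/\tscale{t}\).

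\emph{Summing.} The field terms collect as
\[
 \bigl(w_*-V_*+X_*/\tscale{t}\bigr)\cdot E(t,X_*)=0 .
\]
What remains is
\[
 \frac{\dd}{\dd t}\LE_*=-\frac{\abs{w_*}^2+U(t,X_*)}{\tscale{t}}+\cJ_*,
\]
which is \eqref{eq:labeled-identity}. For \eqref{eq:labeled-inequality}, note that \(\abs{w_*}^2+U\ge\tfrac12\abs{w_*}^2+U=\LE_*\) because \(U\ge0\), and that \(\cJ_*\le\abs{\cJ_*}\).

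\emph{Main obstacle.} The only delicate point is justifying the formula for \(\partial_tU\). This requires differentiating under the singular integral and performing the integration by parts. I would handle it as follows. Since \(f_0\in C^1_c\), the classical solution has \(f(t)\) compactly supported uniformly on bounded time intervals. Hence \(\rho\) and \(j\) are \(C^1\) in \((t,x)\) with compact support locally uniformly in \(t\). Writing \(U=\frac1{4\pi}\int\rho(t,x-z)\abs{z}^{-1}\dd z\) moves the time derivative onto \(\rho\) with an integrable kernel, so dominated convergence applies. The integration by parts against \(\abs{x-y}^{-1}\) is then justified by excising a small ball around \(x\): the boundary term is \(O(\eps)\), and \(\nabla_y\abs{x-y}^{-1}=O(\abs{x-y}^{-2})\) is locally integrable. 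The same integrability also shows that \(\cJ_*\) is well defined and continuous in \(t\).
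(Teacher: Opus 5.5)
Your proposal is correct and is essentially the paper's proof: it uses the same kinetic computation \(\dot w_*=E(t,X_*)-w_*/\tscale{t}\), the continuity equation to turn \(\partial_tU\) into a current integral, and the same decomposition of the velocity into \(w\) plus a position term, in which the radial piece produces \(-U(t,X_*)/\tscale{t}\). The differences are cosmetic. The paper first merges \(\partial_tU+V_*\cdot\nabla_xU\) into \(\iint(v-V_*)\cdot K(X_*-x)f\dd x\dd v\), with \(K(z)=(4\pi)^{-1}z\abs{z}^{-3}\), and substitutes \(V_*-v=w_*-w+(X_*-x)/\tscale{t}\), whereas you cancel the field terms only at the end; your justification of differentiating under the singular integral is also more detailed than the paper's one-line appeal to classical regularity.
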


\begin{proof}
All differentiations below are justified by the classical regularity and
compact support of \(f\).
Since
\[
 w_*'=E(t,X_*)-\frac{w_*}{\tscale{t}},
\]
we have
\begin{equation}
 \frac12\frac{\dd}{\dd t}\abs{w_*}^2
 =w_*\cdot E(t,X_*)-\frac{\abs{w_*}^2}{\tscale{t}}. \label{eq:q-energy}
\end{equation}
Let \(K(z)=(4\pi)^{-1}z\abs{z}^{-3}\). The continuity equation
\(\partial_t\rho+\nabla_x\cdot j=0\), with \(j=\int vf\dd v\), gives
\begin{align*}
 \frac{\dd}{\dd t}U(t,X_*(t))
 &=\partial_tU(t,X_*)+V_*\cdot\nabla_xU(t,X_*)\\
 &=\iint_{\R^3\times\R^3}
 (v-V_*)\cdot K(X_*-x)f(t,x,v)\dd x\dd v.
\end{align*}
Using
\[
 V_*-v=w_*-w+\frac{X_*-x}{\tscale{t}},
\]
we obtain
\begin{equation}
 \frac{\dd}{\dd t}U(t,X_*)
 =-w_*\cdot E(t,X_*)+\cJ_*(t)
 -\frac{U(t,X_*)}{\tscale{t}}. \label{eq:U-energy}
\end{equation}
Adding \eqref{eq:q-energy} and \eqref{eq:U-energy} proves
\eqref{eq:labeled-identity}. Since
\[
 \abs{w_*(t)}^2+U(t,X_*(t))
 \ge\frac12\abs{w_*(t)}^2+U(t,X_*(t))=\LE_*(t),
\]
\eqref{eq:labeled-inequality} follows.
\end{proof}

\subsection{Known dispersive estimates and basic potential bounds}

The characteristic flow preserves the Lebesgue measure and
\begin{equation}
 f(s,X(s;t,x,v),V(s;t,x,v))=f(t,x,v). \label{eq:f-characteristics}
\end{equation}
We use the following estimates from the repulsive theory.

\begin{proposition}[Known dispersive and velocity-support estimates]\label{prop:known-estimates}
There exists \(C>0\) such that for every \(t\ge0\),
\begin{align}
 \norm{f(t)}_{L^1_{x,v}}+\norm{f(t)}_{L^\infty_{x,v}}&\le C, \notag\\
 \iint_{\R^3\times\R^3}\abs{w(t,x,v)}^2f(t,x,v)\dd x\dd v
 &\le C\tscale{t}^{-1}, \label{eq:w2}\\
 \norm{\rho(t)}_{L^{5/3}_x}&\le C\tscale{t}^{-3/5}, \label{eq:rho53}
\end{align}
and the velocity-support estimate \eqref{eq:chen-li-support} holds.
\end{proposition}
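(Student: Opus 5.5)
The plan is to derive the first three bounds from the pseudo-conformal (self-similar) energy identity of Illner--Rein and Perthame, and to take the velocity-support bound directly from Chen and Li \cite{ChenLi}, as recorded in \eqref{eq:chen-li-support}.

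\textbf{Conservation bounds.} The first line follows from \eqref{eq:f-characteristics} and measure preservation of the characteristic flow. These give \(\norm{f(t)}_{L^1}=\norm{f_0}_{L^1}\) and \(\norm{f(t)}_{L^\infty}=\norm{f_0}_{L^\infty}\).

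\textbf{The weighted energy.} For \eqref{eq:w2} I introduce
\[
 \Phi(t):=\frac12\iint\abs{x-(1+t)v}^2f\dd x\dd v+\frac{(1+t)^2}{2}\int\abs{\nabla_xU}^2\dd x .
\]
Differentiating \(\Phi\) with the Vlasov equation and the continuity equation, the kinetic part produces a term that cancels against part of the potential part. One is left with
\[
 \Phi'(t)=(1+t)\int\abs{E}^2\dd x\cdot(1-1)\;-\;\text{(a nonpositive remainder)},
\]
more precisely \(\Phi'(t)=-\tfrac{(1+t)}{2}\int\abs{E}^2\dd x\le0\). This uses the three-dimensional scaling identity
\[
 \int\rho\,x\cdot E\dd x=\tfrac12\int\abs{E}^2\dd x ,
\]
which I verify by symmetrizing the Coulomb kernel. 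Hence \(\Phi(t)\le\Phi(0)\), and \(\Phi(0)\) is finite because \(f_0\) is compactly supported. Since
\[
 \abs{x-(1+t)v}^2=(1+t)^2\abs{w}^2,
\]
this yields \((1+t)^2\iint\abs{w}^2f\le C\), which is even stronger than \eqref{eq:w2}. I will recheck this exponent carefully. The classical statement gives only \(\int\abs{E}^2\lesssim(1+t)^{-1}\) and \(\iint\abs{w}^2f\lesssim(1+t)^{-1}\), because the weight in front of the potential term is \(t\) rather than \(t^2\) in the version whose derivative has a sign. If the sign computation above fails, I fall back to that version: \(\Phi'\le\Phi/(1+t)\), which gives \(\Phi\lesssim 1+t\) and hence exactly \eqref{eq:w2}.

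\textbf{The density bound.} For \eqref{eq:rho53} I use the standard interpolation: split the \(v\)-integral at radius \(r\) around \(x/(1+t)\). This gives
\[
 \rho(t,x)\le C\norm f_\infty r^3+r^{-2}\int\abs{w}^2f\dd v .
\]
Optimizing in \(r\) yields
\[
 \rho\le C\Bigl(\int\abs{w}^2f\dd v\Bigr)^{3/5},
\]
and therefore
\[
 \norm\rho_{5/3}^{5/3}\le C\iint\abs{w}^2f\le C(1+t)^{-1},
\]
which gives the exponent \(-3/5\).

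\textbf{Main obstacle.} The only delicate point is getting the exact algebra and sign of \(\Phi'\), in particular matching the time weights so that the field term has a sign. These are routine but easy to get wrong, so I will follow the computation in \cite{IllnerRein,Perthame}.
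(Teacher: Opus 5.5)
Your route is the paper's in substance, and your conservation and interpolation steps are correct. The paper cites \eqref{eq:w2} as the shifted ($\alpha=1$) moment bound of Chen--Zhang, which descends from the Illner--Rein/Perthame pseudo-conformal identity. It derives \eqref{eq:rho53} by the same kinetic interpolation you wrote, and quotes Chen--Li for the support bound.

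\textbf{The sign error.} The computation you present as your main line has the wrong sign. Set $\tau=1+t$ and $j=\int vf\dd v$. Use the Vlasov equation, your (correct) identity $\int\rho\,x\cdot E\dd x=\frac12\int\abs{E}^2\dd x$, and $\int E\cdot j\dd x=-\frac12\frac{\dd}{\dd t}\int\abs{E}^2\dd x$ from the continuity equation. These give
\[
 \frac{\dd}{\dd t}\,\frac12\iint\abs{x-\tau v}^2f\dd x\dd v
 =-\tau\int\rho\,x\cdot E\dd x+\tau^2\int E\cdot j\dd x
 =-\frac{\tau}{2}\int\abs{E}^2\dd x-\frac{\tau^2}{2}\frac{\dd}{\dd t}\int\abs{E}^2\dd x .
\]
On the other hand, $\frac{\dd}{\dd t}\frac{\tau^2}{2}\int\abs{E}^2\dd x=\tau\int\abs{E}^2\dd x+\frac{\tau^2}{2}\frac{\dd}{\dd t}\int\abs{E}^2\dd x$. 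Adding the two,
\[
 \Phi'(t)=+\frac{1+t}{2}\int\abs{E}^2\dd x\ \ge 0 .
\]
For every nontrivial solution $\int\abs{E}^2\dd x=\int\rho U\dd x>0$, so $\Phi$ is strictly increasing. Hence the claim $\Phi(t)\le\Phi(0)$ is false, and the ``stronger'' bound $(1+t)^2\iint\abs{w}^2f\le C$ cannot be obtained this way.

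\textbf{The fallback is the proof.} Your fallback is exactly the right argument and should replace the main line. Since the kinetic part of $\Phi$ is nonnegative, $\Phi'=\frac{1}{1+t}\cdot\frac{(1+t)^2}{2}\int\abs{E}^2\le\Phi/(1+t)$. Equivalently,
\[
 \frac{\dd}{\dd t}\,\frac{\Phi(t)}{1+t}
 =-\frac{1}{2(1+t)^2}\iint\abs{x-(1+t)v}^2f\dd x\dd v\le 0 .
\]
This is the precise form of your remark that the monotone functional carries weight $1+t$, not $(1+t)^2$, on the field energy. Thus $\Phi(t)\le(1+t)\Phi(0)$, with $\Phi(0)$ finite by compact support. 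Then $\frac{(1+t)^2}{2}\iint\abs{w}^2f\le\Phi(t)$ gives \eqref{eq:w2}, and as a by-product $\int\abs{E}^2\le C(1+t)^{-1}$. With this correction your argument is complete and self-contained, whereas the paper outsources the step to the literature.

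\textbf{A small omission.} Chen--Li's theorem bounds only $R_f$. For the $\theta$ half of \eqref{eq:chen-li-support} you still need the one-line observation that $\abs{x}/(1+t)\le C+\sup_{s\le t}R_f(s)$ on $\supp f(t)$, as the paper remarks.
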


\begin{proof}
The shifted second-moment estimate \eqref{eq:w2} is the \(\alpha=1\) case of
the \(\alpha>0\) estimate proved in \cite{ChenZhangMoments}; see also
\cite[Lemma~2.1]{ChenLi}. The corresponding \(\alpha=0\) dispersive
identities go back to Illner--Rein and Perthame
\cite{IllnerRein,Perthame}, and \eqref{eq:rho53} follows from
\eqref{eq:w2} by the standard kinetic interpolation. The bound for \(R_f\)
in \eqref{eq:chen-li-support} is \cite[Theorem~1.1]{ChenLi}; as noted in the
introduction, its companion bound for \(\theta\) follows directly from the
characteristic equation.
\end{proof}

We shall repeatedly use a Coulomb interpolation estimate.

\begin{lemma}[Coulomb interpolation]\label{lem:coulomb-interpolation}
Let \(1\le p<3\) and let \(g\ge0\) belong to
\(L^p(\R^3)\cap L^\infty(\R^3)\). Then
\begin{equation}
 \sup_{y\in\R^3}\int_{\R^3}
 \frac{g(x)}{\abs{x-y}^2}\dd x
 \le C_p\norm{g}_{L^p}^{p/3}\norm{g}_{L^\infty}^{1-p/3}. \label{eq:coulomb-interpolation}
\end{equation}
\end{lemma}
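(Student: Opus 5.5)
The plan is the usual near/far splitting of the Coulomb kernel at a radius \(r>0\), followed by optimization in \(r\). Fix \(y\in\R^3\). We write
\[
 \int_{\R^3}\frac{g(x)}{\abs{x-y}^2}\dd x
 =\int_{\abs{x-y}<r}\frac{g(x)}{\abs{x-y}^2}\dd x
 +\int_{\abs{x-y}\ge r}\frac{g(x)}{\abs{x-y}^2}\dd x
 =:I_1+I_2 .
\]
All bounds below will be uniform in \(y\), so at the end we may take the supremum.

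\textbf{Near part.} We pull out \(\norm{g}_{L^\infty}\) and use that \(\abs{z}^{-2}\) is locally integrable in three dimensions, which gives
\[
 I_1\le\norm{g}_{L^\infty}\int_{\abs{z}<r}\abs{z}^{-2}\dd z=4\pi r\norm{g}_{L^\infty}.
\]

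\textbf{Far part.} We apply Hölder with exponents \(p\) and \(p'\). For \(p>1\),
\[
 I_2\le\norm{g}_{L^p}\Bigl(\int_{\abs{z}\ge r}\abs{z}^{-2p'}\dd z\Bigr)^{1/p'} .
\]
The remaining integral is finite exactly when \(2p'>3\), which is equivalent to \(p<3\). In that case it equals \(C\,r^{3-2p'}\), and so
\[
 I_2\le C_p\norm{g}_{L^p}\,r^{3/p'-2}=C_p\norm{g}_{L^p}\,r^{1-3/p}.
\]
The endpoint \(p=1\) needs no Hölder at all: \(I_2\le r^{-2}\norm{g}_{L^1}\), which is the same formula with \(p=1\).

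\textbf{Optimization.} Combining the two parts,
\[
 \int_{\R^3}\frac{g(x)}{\abs{x-y}^2}\dd x\le C_p\bigl(r\norm{g}_{L^\infty}+r^{1-3/p}\norm{g}_{L^p}\bigr).
\]
We balance the two terms by choosing \(r=(\norm{g}_{L^p}/\norm{g}_{L^\infty})^{p/3}\). With this choice each term equals
\[
 \norm{g}_{L^p}^{p/3}\norm{g}_{L^\infty}^{1-p/3},
\]
which is exactly \eqref{eq:coulomb-interpolation}.

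If \(g\equiv0\) there is nothing to prove. Otherwise both norms are positive (they are finite by assumption), so this choice of \(r\) is admissible.

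There is no real obstacle here. The only point needing care is the integrability condition \(2p'>3\) in the far part, which is precisely where the restriction \(p<3\) in the statement comes from, together with the separate check of the endpoint \(p=1\).
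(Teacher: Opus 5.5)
Your proposal is correct and follows the paper's proof essentially step for step. You split at a radius \(r\), bound the near field by \(4\pi r\norm{g}_{L^\infty}\), bound the far field by H\"older (or directly by the \(L^1\) norm when \(p=1\)) as \(C_p r^{1-3/p}\norm{g}_{L^p}\), and then optimize in \(r\). Your explicit choice \(r=(\norm{g}_{L^p}/\norm{g}_{L^\infty})^{p/3}\) and the separate treatment of \(g\equiv0\) simply spell out the optimization step that the paper leaves implicit.
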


\begin{proof}
Split the integral at a radius \(R>0\). The near field is controlled by the \(L^\infty\) norm:
\[
 \int_{\abs{x-y}\le R}\frac{g(x)}{\abs{x-y}^2}\dd x
 \le \norm{g}_{L^\infty}
 \int_{\abs{z}\le R}\abs{z}^{-2}\dd z
 \le C\norm{g}_{L^\infty}R.
\]
For \(1<p<3\), H\"older's inequality in the far field gives
\[
 \int_{\abs{x-y}>R}\frac{g(x)}{\abs{x-y}^2}\dd x
 \le \norm{g}_{L^p}
 \left(\int_{\abs{z}>R}\abs{z}^{-2p'}\dd z\right)^{1/p'}
 \le C_p\norm{g}_{L^p}R^{1-3/p}.
\]
For \(p=1\), the same estimate reads
\(R^{-2}\norm{g}_{L^1}\). Optimizing
\[
 C\norm{g}_{L^\infty}R
 +C_p\norm{g}_{L^p}R^{1-3/p}
\]
in \(R\) yields \eqref{eq:coulomb-interpolation}. This is the standard estimate used in \cite[Lemma~3.2]{ChenLi}.
\end{proof}

\begin{remark}
In every application below we use \eqref{eq:coulomb-interpolation} with \(p=5/3\), in which case the split estimate is
\[
 \int\frac{g(x)}{\abs{x-y}^2}\dd x
 \le C\norm{g}_{L^\infty}R
 +C\norm{g}_{L^{5/3}}R^{-4/5}.
\]
\end{remark}

The potential correction is lower order on large record scales.

\begin{lemma}[Decay of the Coulomb potential]\label{lem:U-decay}
For every \(t\ge0\),
\begin{equation}
 \norm{U(t)}_{L^\infty(\R^3)}
 \le C\tscale{t}^{-1/2}. \label{eq:U-decay}
\end{equation}
\end{lemma}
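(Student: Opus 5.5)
We bound \(U(t,x)\) by a Coulomb interpolation of the same type as \cref{lem:coulomb-interpolation}, now for the kernel \(\abs{x-y}^{-1}\), using only the mass bound \(\norm{\rho(t)}_{L^1}\le C\) and the dispersive bound \eqref{eq:rho53}. Fix \(x\in\R^3\) and a radius \(R>0\), and split
\[
 4\pi U(t,x)=\int_{\abs{x-y}\le R}\frac{\rho(t,y)}{\abs{x-y}}\dd y
 +\int_{\abs{x-y}>R}\frac{\rho(t,y)}{\abs{x-y}}\dd y .
\]

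\emph{Near field.} We apply H\"older's inequality with exponents \(5/3\) and \(5/2\). The kernel \(\abs{z}^{-1}\) lies in \(L^{5/2}(\{\abs{z}\le R\})\), since \(5/2<3\). Its norm there is
\[
 \Bigl(\int_{\abs z\le R}\abs z^{-5/2}\dd z\Bigr)^{2/5}=CR^{(1/2)\cdot(2/5)}=CR^{1/5}.
\]
Hence the near-field term is at most \(C\norm{\rho(t)}_{L^{5/3}}R^{1/5}\le C(1+t)^{-3/5}R^{1/5}\), by \eqref{eq:rho53}.

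\emph{Far field.} The kernel is at most \(R^{-1}\) there, and \(\norm{\rho(t)}_{L^1}=\norm{f(t)}_{L^1_{x,v}}\le C\) by \cref{prop:known-estimates}. So this term is at most \(CR^{-1}\).

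\emph{Optimization.} We balance the two contributions, \((1+t)^{-3/5}R^{1/5}=R^{-1}\). This gives \(R^{6/5}=(1+t)^{3/5}\), that is, \(R=(1+t)^{1/2}\). Both terms then equal \(C(1+t)^{-1/2}\), uniformly in \(x\), which proves \eqref{eq:U-decay}.

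The only point that needs care is the choice of Lebesgue exponent in the near field. The \(L^\infty\) norm of \(\rho\) is not available with decay, since \(R_f\) grows, so we must use the \(L^{5/3}\) bound. We also need the dual kernel to be locally integrable, which holds because \(5/2<3\). No other step is delicate.

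As a sanity check on the rate, the interpolation \(\norm{\rho}_{L^{6/5}}\lesssim\norm{\rho}_{L^1}^{7/12}\norm{\rho}_{L^{5/3}}^{5/12}\sim(1+t)^{-1/4}\) combined with Hardy–Littlewood–Sobolev would give only an \(L^6\) bound on \(U\), not a pointwise one. So the direct split above is the right route.
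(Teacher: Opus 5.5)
Your proposal is correct and follows the paper's proof: the paper also splits at a radius $r$, bounds the near field by H\"older as $C\norm{\rho(t)}_{L^{5/3}}r^{1/5}$ and the far field by mass conservation as $Cr^{-1}$, then optimizes to get $\norm{U(t)}_{L^\infty}\le C\norm{\rho(t)}_{L^{5/3}}^{5/6}\le C(1+t)^{-1/2}$. Your explicit computation of the $L^{5/2}$ norm of the kernel and of the optimal radius $R=(1+t)^{1/2}$ supplies the arithmetic that the paper leaves implicit.
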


\begin{proof}
For \(r>0\), H\"older's inequality in the near field and mass conservation in the far field give
\[
 U(t,x)
 \le C\norm{\rho(t)}_{L^{5/3}}r^{1/5}+Cr^{-1}.
\]
Optimizing in \(r\) yields
\[
 \norm{U(t)}_{L^\infty}
 \le C\norm{\rho(t)}_{L^{5/3}}^{5/6}.
\]
The conclusion follows from \eqref{eq:rho53}.
\end{proof}

\subsection{The adaptive impulse scale}

For \(t>0\) and \(0\le\delta\le t\), set
\[
 F_t(\delta):=
 \max_{(x,v)\in\supp f(t)}
 \int_{t-\delta}^{t}
 \abs{E(s,X(s;t,x,v))}\dd s.
\]
For \(\lambda\ge0\), define
\begin{equation}
 \cD(t,\lambda):=
 \sup\left\{\delta\in[0,t]:F_t(\delta)\le\lambda\right\}.
 \label{eq:Delta-def}
\end{equation}

The following elementary observation makes the use of \(\cD\) at its endpoint rigorous.

\begin{lemma}[Closedness of the impulse scale]\label{lem:Delta-closed}
For each fixed \(t>0\), the function \(F_t\) is continuous and
nondecreasing. Consequently, for every \(\lambda\ge0\),
\begin{equation*}
 0\le\delta\le\cD(t,\lambda)
 \quad\Longrightarrow\quad
 F_t(\delta)\le\lambda.
\end{equation*}
In particular, the supremum in \eqref{eq:Delta-def} is a maximum.
\end{lemma}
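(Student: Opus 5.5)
Monotonicity is immediate: for \(0\le\delta_1\le\delta_2\le t\) and each fixed \((x,v)\in\supp f(t)\), the integrand \(\abs{E(s,X(s;t,x,v))}\) is nonnegative. Hence
\[
 \int_{t-\delta_1}^{t}\abs{E(s,X(s;t,x,v))}\dd s\le\int_{t-\delta_2}^{t}\abs{E(s,X(s;t,x,v))}\dd s ,
\]
and taking the maximum over the same set gives \(F_t(\delta_1)\le F_t(\delta_2)\). The maximum in the definition of \(F_t\) is attained because \(\supp f(t)\) is compact and the integral depends continuously on \((x,v)\). Indeed, the characteristic flow of the classical solution is continuous in \((s,x,v)\), and \(E\) is continuous and bounded on \([0,t]\times\R^3\). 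For the boundedness of \(E\) we may invoke \cref{lem:coulomb-interpolation} with \(\norm{\rho(s)}_{L^\infty}\) bounded on \([0,t]\) by the compact support of \(f\) on that interval.

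For continuity in \(\delta\), set \(M:=\sup_{s\in[0,t],\,y\in\R^3}\abs{E(s,y)}<\infty\). Then for each \((x,v)\),
\[
 \abs{\int_{t-\delta_1}^{t}\abs{E(s,X(s;t,x,v))}\dd s-\int_{t-\delta_2}^{t}\abs{E(s,X(s;t,x,v))}\dd s}\le M\abs{\delta_1-\delta_2}.
\]
A maximum over a family of \(M\)-Lipschitz functions is again \(M\)-Lipschitz, so \(F_t\) is Lipschitz and in particular continuous. The only point needing care is the finiteness of \(M\), which follows from the a priori estimates above on a finite time interval.

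Now fix \(\lambda\ge0\) and note that \(F_t(0)=0\le\lambda\). So the set \(A:=\{\delta\in[0,t]:F_t(\delta)\le\lambda\}\) is nonempty, and \(\cD(t,\lambda)=\sup A\) is well defined. Since \(F_t\) is continuous, \(A=F_t^{-1}([0,\lambda])\cap[0,t]\) is closed, so \(\sup A\in A\) and the supremum is a maximum. Since \(F_t\) is nondecreasing, for any \(0\le\delta\le\cD(t,\lambda)\) we get \(F_t(\delta)\le F_t(\cD(t,\lambda))\le\lambda\), which is the claimed implication.
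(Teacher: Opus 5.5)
Your proof is correct and follows the same route as the paper. Monotonicity comes from the nonnegative integrand and continuity of \(F_t\) makes \(\{\delta\in[0,t]:F_t(\delta)\le\lambda\}\) a closed interval containing \(0\); the only cosmetic difference is that you get continuity from the explicit Lipschitz bound \(\abs{F_t(\delta_1)-F_t(\delta_2)}\le M\abs{\delta_1-\delta_2}\) with \(M=\sup_{[0,t]\times\R^3}\abs{E}\), whereas the paper uses uniform continuity of \((\delta,x,v)\mapsto\int_{t-\delta}^{t}\abs{E(s,X(s;t,x,v))}\dd s\) on the compact set \([0,t]\times\supp f(t)\).
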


\begin{proof}
The terminal support \(\supp f(t)\) is compact. The map
\[
 (\delta,x,v)\longmapsto
 \int_{t-\delta}^{t}
 \abs{E(s,X(s;t,x,v))}\dd s
\]
is continuous on the compact set \([0,t]\times\supp f(t)\), because the
classical field and the characteristic flow are continuous. It is therefore
uniformly continuous, and its maximum over the terminal support is continuous
in \(\delta\). Monotonicity follows from the nonnegativity of the integrand.
Hence the set
\(\{\delta:F_t(\delta)\le\lambda\}\) is a closed interval containing zero, which proves the claim.
\end{proof}

We use the following lower bound for the adaptive impulse scale.

\begin{proposition}[Adaptive-time lower bound]\label{prop:Delta}
There exists \(c_0\in(0,1)\), depending only on the initial datum, such that for every \(t>0\) and \(\lambda>0\),
\begin{equation}
 \cD(t,\lambda)\ge c_0\min\left\{
 t,\frac{\lambda^2\tscale{t}}{\tlog{t}},
 \frac{\lambda\tscale{t}^{5/7}}{\tlog{t}^{4/7}}
 \right\}. \label{eq:Delta-lower}
\end{equation}
\end{proposition}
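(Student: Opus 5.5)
The plan is to reduce \eqref{eq:Delta-lower} to an upper bound on the impulse $F_t(\delta)$. Concretely, I would aim for the following estimate, valid for every $t>0$ and every $0\le\delta\le t/2$:
\begin{equation*}
 F_t(\delta)\le C_1\left(\frac{\delta\,\tlog{t}}{\tscale{t}}\right)^{1/2}
 +C_1\,\delta\,\tscale{t}^{-5/7}\tlog{t}^{4/7},
\end{equation*}
with $C_1$ depending only on $f_0$. The restriction $\delta\le t/2$ keeps every time $s\in[t-\delta,t]$ comparable to $t$, that is, $1+s\asymp 1+t$. This is what allows us to apply \cref{prop:known-estimates} and \cref{lem:U-decay} with the weight $\tscale{t}$ uniformly on the interval.

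Given this bound, the lower bound on $\cD$ is elementary. Set
\begin{equation*}
 \delta_0:=c_0\min\left\{t,\frac{\lambda^2\tscale{t}}{\tlog{t}},\frac{\lambda\tscale{t}^{5/7}}{\tlog{t}^{4/7}}\right\},
 \qquad c_0\le\min\{1/2,(4C_1)^{-2}\}.
\end{equation*}
Then $\delta_0\le t/2$. The first term of the bound is at most $C_1c_0^{1/2}\lambda\le\lambda/4$, and the second is at most $C_1c_0\lambda\le\lambda/4$. Hence $F_t(\delta_0)\le\lambda$, so $\delta_0$ lies in the set defining $\cD(t,\lambda)$, and $\cD(t,\lambda)\ge\delta_0$. \Cref{lem:Delta-closed} is not needed for this direction, but it confirms that the set is an interval, so nothing is lost at its endpoint.

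The substance is the impulse bound, which I would prove by a Chen--Li-type decomposition along a fixed terminal characteristic $(X_*,V_*)$ on $[t-\delta,t]$. The second term is simply $\delta$ times the pointwise field bound. I would obtain that bound from \cref{lem:coulomb-interpolation} with $p=5/3$, applied to the $\rho$-contribution with $\norm{\rho}_{L^\infty}\lesssim R_f^3$, or more sharply using the support bound \eqref{eq:chen-li-support} together with a split in $\abs{w}$. The first, time-averaged term should come from sources whose relative velocity to $V_*$ is not small. For these I would use the crossing estimate
\begin{equation*}
 \int_{t-\delta}^{t}\abs{X_*(s)-X(s)}^{-2}\dd s\lesssim\bigl(\text{relative speed}\times\text{minimal distance}\bigr)^{-1},
\end{equation*}
then integrate over phase space, weighting by $\abs{w}^2$ through \eqref{eq:w2}. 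Cauchy--Schwarz in time then produces the $\delta^{1/2}\tscale{t}^{-1/2}$ scaling, and the logarithm arises from the dyadic range of relative speeds and distances.

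I expect the main obstacle to be the treatment of sources that stay close to the target in phase space, where the crossing estimate degenerates. There I would fall back on the pointwise bound, with the cutoff on relative velocity optimized to balance the two pieces; this balancing is what generates the exponents $5/7$ and $4/7$. Since the paper attributes this bound to Chen and Li \cite{ChenLi}, I would in practice invoke their adaptive-time estimate for this step. I would then check only that their constants depend solely on $f_0$, and carry out the reduction to the $\min$ form described above.
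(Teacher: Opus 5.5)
Your operative argument, which invokes Chen--Li's adaptive-time estimate and converts between the impulse form $F_t(\delta)\lesssim\bigl(\delta\log(2+t)/(1+t)\bigr)^{1/2}+\delta(1+t)^{-5/7}\log^{4/7}(2+t)$ and the $\cD$ form, is essentially the paper's proof, which is just the citation of \cite[Lemma~2.3]{ChenLi}; your choice $c_0\le\min\{1/2,(4C_1)^{-2}\}$ does give $F_t(\delta_0)\le\lambda/2$. Do not, however, treat your sketch as an independent proof. The linear term cannot be $\delta$ times a pointwise field bound: Coulomb interpolation with $\norm{\rho(t)}_{L^{5/3}}\lesssim(1+t)^{-3/5}$ and $\norm{\rho(t)}_{L^\infty}\lesssim R_f(t)^3$ yields only $\norm{E(t)}_{L^\infty}\lesssim(1+t)^{-7/45}\log^{32/45}(2+t)$, and even \cref{thm:main} gives only $(1+t)^{-13/63}\log^{16/63}(2+t)$, so the rate $(1+t)^{-5/7}\log^{4/7}(2+t)$ is a time-averaged bound along characteristics coming from the crossing analysis of \cite{ChenLi,PallardKRM}, not from any available pointwise estimate.
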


This is \cite[Lemma~2.3]{ChenLi}, where the estimate is traced back to Pallard
\cite{PallardKRM}. The case \(\lambda=0\), when
needed below, is immediate because the right-hand side then vanishes.

The source-dependent crossing time below contains both the relative velocity
and the self-similar velocity. Its numerical prefactor must be chosen
uniformly small.

\begin{lemma}[Uniform rescaling of the local time]\label{lem:rescaling}
Fix \(0<\eps<1/20\). There exists \(c_d>0\) with the following property.
Given a target characteristic \((X_*,V_*)\), a terminal time \(t>0\), and a
terminal point \((x,v)\), put
\[
 a=v-V_*(t),
 \qquad
 w=v-\frac{x}{\tscale{t}},
\]
and define
\begin{equation*}
 \begin{aligned}
 d(t,x,v):=c_d\min\Bigg\{&t,
 \frac{\tscale{t}\abs{a}^2}{\tlog{t}},
 \frac{\tscale{t}^{5/7}\abs{a}}{\tlog{t}^{4/7}},\frac{\tscale{t}\abs{w}^2}{\tlog{t}},
 \frac{\tscale{t}^{5/7}\abs{w}}{\tlog{t}^{4/7}}
 \Bigg\}.
 \end{aligned}
\end{equation*}
Then
\begin{equation}
 d(t,x,v)\le
 \min\left\{\frac t5,
 \cD(t,\eps\abs{a}),
 \cD(t,\eps\abs{w})\right\}. \label{eq:d-Delta}
\end{equation}
Moreover, given fixed constants \(\kappa>0\) and \(C_w\ge1\), the constant
\(c_d\) can be decreased, depending only on \(\kappa,C_w,\eps\), so that,
for every \(Q>0\),
\begin{equation}
 \abs{w}\le C_wQ
 \quad\Longrightarrow\quad
 d(t,x,v)\le
 \min\left\{\frac t5,\cD(t,\kappa Q)\right\}. \label{eq:d-global}
\end{equation}
\end{lemma}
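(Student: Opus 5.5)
The plan is to use the monotonicity of \(\cD(t,\cdot)\) in \(\lambda\) together with \cref{prop:Delta}, and to choose \(c_d\) small enough that each term in the minimum defining \(d\) falls below the matching term in \eqref{eq:Delta-lower}. Since \(F_t\) is nondecreasing, \(\cD(t,\lambda)\) is nondecreasing in \(\lambda\). By \cref{lem:Delta-closed}, any \(\delta\le\) the right-hand side of \eqref{eq:Delta-lower} then satisfies \(F_t(\delta)\le\lambda\).

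\emph{First part, \eqref{eq:d-Delta}.} Apply \cref{prop:Delta} with \(\lambda=\eps\abs{a}\). This gives
\[
 \cD(t,\eps\abs a)\ge c_0\min\Bigl\{t,\frac{\eps^2\abs a^2\tscale t}{\tlog t},\frac{\eps\abs a\tscale t^{5/7}}{\tlog t^{4/7}}\Bigr\}
 \ge c_0\eps^2\min\Bigl\{t,\frac{\tscale t\abs a^2}{\tlog t},\frac{\tscale t^{5/7}\abs a}{\tlog t^{4/7}}\Bigr\},
\]
using \(\eps<1\). Because \(d\) is at most \(c_d\) times each of these three quantities, we get \(d\le\cD(t,\eps\abs a)\) once \(c_d\le c_0\eps^2\). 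The same argument with \(w\) in place of \(a\) gives \(d\le\cD(t,\eps\abs w)\). Finally, \(d\le c_d t\le t/5\) if \(c_d\le 1/5\). The degenerate cases \(a=0\) or \(w=0\) are trivial: then \(d=0\), and \(\cD\ge0\) by definition, since \(\delta=0\) is always admissible. So \(c_d=\min\{1/5,c_0\eps^2\}\) suffices.

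\emph{Second part, \eqref{eq:d-global}.} Here \(\abs w\le C_wQ\), and we must compare with \(\cD(t,\kappa Q)\). The idea is to use only the \(w\)-terms of \(d\). Set \(\lambda=\kappa Q\) and note \(\abs w\le C_wQ=(C_w/\kappa)\lambda\). Then
\[
 \frac{\tscale t\abs w^2}{\tlog t}\le\frac{C_w^2}{\kappa^2}\frac{\lambda^2\tscale t}{\tlog t},\qquad
 \frac{\tscale t^{5/7}\abs w}{\tlog t^{4/7}}\le\frac{C_w}{\kappa}\frac{\lambda\tscale t^{5/7}}{\tlog t^{4/7}}.
\]
Hence
\[
 d\le c_d\max\{1,C_w^2/\kappa^2,C_w/\kappa\}\,\min\Bigl\{t,\frac{\lambda^2\tscale t}{\tlog t},\frac{\lambda\tscale t^{5/7}}{\tlog t^{4/7}}\Bigr\}.
\]
If \(c_d\max\{1,C_w^2/\kappa^2,C_w/\kappa\}\le c_0\), \cref{prop:Delta} gives \(d\le\cD(t,\kappa Q)\). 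Decreasing \(c_d\) in this way preserves the first part, because that part only needs \(c_d\) to be small enough.

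There is no real obstacle; the argument is bookkeeping. The only points needing care are these: the min of scaled terms dominates the scaled min, which holds since all prefactors lie in \((0,1]\) after the reductions above; the degenerate zero cases; and keeping \(c_d\) independent of \(t\), \(Q\), and the characteristic, which holds because \(c_0\) depends only on \(f_0\).
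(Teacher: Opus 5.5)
Your proposal is correct and follows the paper's proof essentially step by step. Both arguments apply \cref{prop:Delta} at \(\lambda=\eps\abs{a}\), \(\lambda=\eps\abs{w}\) and \(\lambda=\kappa Q\), treat \(a=0\) or \(w=0\) as trivial, and impose the same smallness conditions: your \(c_d\le\min\{1/5,c_0\eps^2\}\) and \(c_d\max\{1,C_w^2/\kappa^2,C_w/\kappa\}\le c_0\) are equivalent to the paper's \(c_d\le\min\{1/5,c_0,c_0\eps^2,c_0\eps\}\), \(c_dC_w^2\le c_0\kappa^2\), \(c_dC_w\le c_0\kappa\).
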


\begin{proof}
If \(a=0\), then \(d(t,x,v)=0\), and the assertion involving
\(\cD(t,\eps\abs{a})\) is immediate. Otherwise, by \cref{prop:Delta},
\[
 \cD(t,\eps\abs{a})
 \ge c_0\min\left\{t,
 \eps^2\frac{\tscale{t}\abs{a}^2}{\tlog{t}},
 \eps\frac{\tscale{t}^{5/7}\abs{a}}{\tlog{t}^{4/7}}
 \right\}.
\]
The two branches involving \(a\) in the definition of \(d\) therefore give
the corresponding part of \eqref{eq:d-Delta} if
\[
 c_d\le\min\left\{\frac15,c_0,c_0\eps^2,c_0\eps\right\}.
\]
The argument for the two branches involving \(w\) is identical, with the
case \(w=0\) again immediate. If \(\abs{w}\le C_wQ\), then
\[
 d(t,x,v)\le c_d\min\left\{t,
 C_w^2\frac{Q^2\tscale{t}}{\tlog{t}},
 C_w\frac{Q\tscale{t}^{5/7}}{\tlog{t}^{4/7}}
 \right\}.
\]
Comparison with \eqref{eq:Delta-lower} at the threshold \(\kappa Q\) proves
\eqref{eq:d-global} after additionally imposing
\[
 c_dC_w^2\le c_0\kappa^2,
 \qquad
 c_dC_w\le c_0\kappa.
\]
\end{proof}

\subsection{Geometric crossing and time extension}

We first record the elementary geometric estimate behind the
source-characteristic argument.

\begin{lemma}[Geometric crossing]\label{lem:geometric-crossing}
Let \(h>0\) and \(Z\in C^1([t-h,t];\R^3)\). Assume \(Z'(t)\ne0\) and
\[
 \abs{Z'(s)-Z'(t)}\le\eta\abs{Z'(t)}
 \quad\text{for every }s\in[t-h,t],
 \qquad 0<\eta<\frac12.
\]
If \(r_0>0\) and \(\Omega\subset[t-h,t]\) is measurable with
\(\abs{Z(s)}\ge r_0\) on \(\Omega\), then
\begin{equation*}
 \int_\Omega\frac{\dd s}{\abs{Z(s)}^2}
 \le\frac{C_\eta}{\abs{Z'(t)}r_0}.
\end{equation*}
\end{lemma}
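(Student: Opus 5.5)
The plan is to show that \(Z\) is almost a straight line on \([t-h,t]\) with velocity close to \(u:=Z'(t)\), and then to integrate \(\abs{Z}^{-2}\) along that near-line. Write \(u=Z'(t)\) and \(e=u/\abs{u}\), and split
\[
 Z(s)=\alpha(s)e+\beta(s),\qquad \beta(s)\perp e .
\]
The hypothesis gives
\[
 \alpha'(s)=Z'(s)\cdot e\ge \abs{u}-\abs{Z'(s)-u}\ge(1-\eta)\abs{u}\ge\tfrac12\abs{u}
\]
on \([t-h,t]\). So \(\alpha\) is strictly increasing, with derivative comparable to \(\abs{u}\); the upper bound \(\alpha'\le(1+\eta)\abs{u}\) holds as well. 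The hypothesis also gives \(\abs{\beta'(s)}\le\eta\abs{u}\), hence \(\abs{\beta'}\le 2\eta\,\alpha'\). We will not need any control on the size of \(\beta\) itself: we keep only the bound \(\abs{Z(s)}\ge\abs{\alpha(s)}\), together with \(\abs{Z(s)}\ge r_0\) on \(\Omega\).

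The key step is the pointwise bound
\[
 \abs{Z(s)}^2\ge c_\eta\bigl(r_0^2+\alpha(s)^2\bigr)\qquad\text{on }\Omega .
\]
This follows at once from \(\abs{Z}^2\ge\max\{r_0^2,\alpha^2\}\ge\tfrac12(r_0^2+\alpha^2)\). Note that this step does not even use the smallness of \(\beta'\); it uses only the monotonicity of \(\alpha\).

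Next change variables \(\sigma=\alpha(s)\), which is legitimate because \(\alpha\) is \(C^1\) and strictly monotone. Since \(\dd s=\dd\sigma/\alpha'\le 2\,\dd\sigma/\abs{u}\), we get
\[
 \int_\Omega\frac{\dd s}{\abs{Z(s)}^2}
 \le\frac{2}{\abs{u}}\int_{\R}\frac{2\,\dd\sigma}{r_0^2+\sigma^2}
 =\frac{4\pi}{\abs{u}\,r_0}.
\]
This gives the claim with \(C_\eta=4\pi\). Measurability of \(\Omega\) is all that is needed to bound the integral over \(\Omega\) by the integral over the whole interval with the enlarged integrand \(\one_\Omega\cdot 2/(r_0^2+\alpha^2)\).

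The only point requiring care, and thus the main obstacle, is justifying the lower bound \(\alpha'\ge\tfrac12\abs{u}\) uniformly on the interval. This is exactly where \(\eta<1/2\) enters. Everything else is the elementary one-dimensional integral \(\int(r_0^2+\sigma^2)^{-1}\dd\sigma=\pi/r_0\). If the constant is wanted to reflect \(\eta\), one can replace \(\tfrac12\) by \(1-\eta\), giving \(C_\eta=2\pi/(1-\eta)\).
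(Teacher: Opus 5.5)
Your proof is correct and is essentially the paper's argument: project onto $e=Z'(t)/\abs{Z'(t)}$, use $\alpha'\ge(1-\eta)\abs{Z'(t)}$ to change variables, bound $\abs{Z}^2\ge\max\{r_0^2,\alpha^2\}$ on $\Omega$, and integrate over $\R$. The only difference is that you replace the maximum by $\tfrac12(r_0^2+\sigma^2)$ to get an explicit constant, while the paper integrates $\max\{r_0^2,y^2\}^{-1}$ directly. The orthogonal component $\beta$ you introduce is never used and can be dropped.
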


\begin{proof}
Let \(e=Z'(t)/\abs{Z'(t)}\) and set \(\varphi(s)=Z(s)\cdot e\). Then
\[
 \varphi'(s)=Z'(s)\cdot e
 \ge(1-\eta)\abs{Z'(t)},
\]
so \(\varphi\) is strictly increasing. Moreover,
\[
 \abs{Z(s)}^2\ge\max\{r_0^2,\abs{\varphi(s)}^2\}
 \quad\text{on }\Omega.
\]
Changing variables from \(s\) to \(\varphi\) and enlarging the integration range to \(\R\),
\[
 \int_\Omega\frac{\dd s}{\abs{Z(s)}^2}
 \le\frac1{(1-\eta)\abs{Z'(t)}}
 \int_\R\frac{\dd y}{\max\{r_0^2,y^2\}}
 \le\frac{C_\eta}{\abs{Z'(t)}r_0}.
\]
\end{proof}

The next lemma replaces a source-dependent short interval by a prescribed longer interval.

\begin{lemma}[One-sided time extension]\label{lem:time-extension}
Let \(\alpha,\beta\in L^1((0,T))\) be nonnegative, and let
\(d\in C([0,T];[0,\infty))\) satisfy \(d(s)\le s\). Suppose that
\[
 \int_{s-d(s)}^s \alpha(u)\dd u
 \le\int_{s-d(s)}^s \beta(u)\dd u
 \quad\text{for every }s\in(0,T].
\]
Assume also that \(\alpha(s)=0\) for almost every \(s\) such that \(d(s)=0\).
Then, for every \(t\in(0,T]\) and every
\(\delta\in[d(t),t]\),
\[
 \int_{t-\delta}^t \alpha(s)\dd s
 \le2\int_{t-\delta}^t \beta(s)\dd s.
\]
\end{lemma}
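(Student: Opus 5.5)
The plan is a one-dimensional covering argument. Fix \(t\in(0,T]\) and \(\delta\in[d(t),t]\), and set \(g(s):=s-d(s)\). The function \(g\) is continuous, satisfies \(g(s)\le s\), and every admissible interval \(I_s:=[g(s),s]\) has \(s\) as its right endpoint. The difficulty is that an interval \(I_s\) with \(s\in[t-\delta,t]\) may stick out to the left of \(t-\delta\), where \(\beta\) is not counted. I will deal with this by isolating one exact interval that reaches down to \(t-\delta\), and covering the remaining part by intervals that stay inside \([t-\delta,t]\).

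\emph{Step 1: the splitting point.} Define
\[
 \tau:=\sup\{s\in[t-\delta,t]:g(s)\le t-\delta\}.
\]
The set is nonempty, since \(g(t-\delta)\le t-\delta\), and it is closed by continuity, so \(g(\tau)\le t-\delta\). If \(\tau=t\), then \(g(t)=t-d(t)\ge t-\delta\), hence \(g(\tau)=t-\delta\). If \(\tau<t\), then \(g>t-\delta\) on \((\tau,t]\), and continuity again gives \(g(\tau)\ge t-\delta\). In either case \(I_\tau=[t-\delta,\tau]\) exactly, and the hypothesis at \(s=\tau\) (or the trivial case \(\tau=t-\delta\)) gives
\[
 \int_{t-\delta}^{\tau}\alpha\le\int_{t-\delta}^{\tau}\beta .
\]
If \(\tau=t\), this already proves the claim.

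\emph{Step 2: covering \((\tau,t]\).} For every \(s\in(\tau,t]\) we have \(g(s)>t-\delta\), so \(I_s\subset(t-\delta,t]\). Let
\[
 A:=\{s\in(\tau,t]:d(s)>0\}.
\]
By hypothesis \(\alpha=0\) a.e. on \((\tau,t]\setminus A\), so it suffices to bound \(\int_A\alpha\). Each \(s\in A\) lies in its own nondegenerate interval \(I_s\). By inner regularity, it is enough to bound \(\int_K\alpha\) for compact \(K\subset A\), uniformly in \(K\).

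The open-in-\(A\) neighbourhoods do not directly give open covers, so I would use the relatively open sets \((g(s),s]\). After handling the right endpoints by a slight enlargement, or by noting that a point of \(K\) is covered by its own interval, compactness yields a finite subfamily \(I_{s_1},\dots,I_{s_N}\) covering \(K\). The standard one-dimensional reduction applies: discard any interval contained in the union of others, and order the rest by right endpoints. This leaves a subfamily in which every point lies in at most two intervals, since among three intervals containing a common point, the one with middle right endpoint is contained in the union of the other two. Then
\[
 \int_K\alpha\le\sum_j\int_{I_{s_j}}\alpha\le\sum_j\int_{I_{s_j}}\beta\le2\int_{(t-\delta,t]}\beta\cdot\one_{(\tau,t]}\text{-part}.
\]
The last step needs care: the intervals \(I_{s_j}\) may extend left of \(\tau\) into \([t-\delta,\tau]\). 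So the honest bound is \(2\int_{t-\delta}^{t}\beta\), and this already overlaps with Step 1.

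\emph{Step 3: combining.} This overlap is the main obstacle, because adding Step 1 naively would give the constant \(3\) rather than \(2\). To fix it, I will include \(I_\tau=[t-\delta,\tau]\) as a member of the family from the start. Cover the compact set \(K\cup\{\tau\}\), with \(\tau\) covered by \(I_\tau\) and \(I_\tau\) kept in the finite family. Then apply the overlap-two selection to the whole family \(\{I_\tau\}\cup\{I_{s_j}\}\), all of whose members lie in \([t-\delta,t]\). If the reduction discards \(I_\tau\), the covering of \([t-\delta,\tau]\) is still provided by the retained intervals, so the same sum bound applies. This gives
\[
 \int_{[t-\delta,\tau]\cup K}\alpha\le2\int_{t-\delta}^t\beta .
\]
Letting \(K\uparrow A\), and using \(\alpha=0\) a.e. on \((\tau,t]\setminus A\), proves the lemma. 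The delicate points to verify are the compactness and selection step, in particular the half-open intervals and right-endpoint coverage, and the claim that the selection keeps multiplicity at most two.
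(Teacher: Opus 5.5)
Your proof is correct, and it takes a genuinely different route from the paper. The paper does not argue the lemma directly. It invokes Schaeffer's one-sided covering lemma in the form of \cite[Lemma~3.4]{ChenLi}, which is stated only for \(t<T\). It then reaches \(t=T\) by approximation: \(t_n\uparrow T\) with \(\delta_n=\delta-(T-t_n)\), plus separate limiting arguments when \(\delta=d(T)\).

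You instead give a self-contained covering proof in three moves:
\begin{itemize}
\item You isolate the splitting point \(\tau\) with \(\tau-d(\tau)=t-\delta\) exactly; this is where continuity of \(d\) enters.
\item You observe that every \(I_s\) with \(s\in(\tau,t]\) stays inside \((t-\delta,t]\).
\item You run a one-dimensional Besicovitch selection on a family that already contains \(I_\tau=[t-\delta,\tau]\).
\end{itemize}
The constant \(2\) is then exactly the overlap bound of a minimal subcover. Your argument treats \(t=T\) with no limiting procedure. It also makes visible that the case \(\delta=d(t)\) is the hypothesis itself with constant \(1\), so the paper's separate endpoint cases are unnecessary. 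The paper's route buys brevity by outsourcing the covering argument, at the cost of the endpoint patch.

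Three points in your write-up should be tightened.
\begin{itemize}
\item \emph{Compactness.} The sets \((g(s),s]\) are not open, so compactness does not apply to them as written. Instead, take \(x\in(\tau,t)\) with \(d(x)>0\). Continuity of \(g\) gives some \(s\in(x,t]\) with \(g(s)<x\). Hence the open intervals \((g(s),s)\), \(s\in(\tau,t]\), cover the open set \(\{x\in(\tau,t):d(x)>0\}\). You can then approximate the integral of \(\alpha\) over this set by integrals over compact subsets; the point \(t\) is null.
\item \emph{Overlap claim.} You assert that among three intervals through a common point, the one with the middle right endpoint lies in the union of the other two. This is false in general: take \([0,1]\), \([-5,2]\), \([1/2,3]\), all containing \(3/4\). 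It does hold in a minimal family, because no member is nested in another, so left and right endpoints are ordered the same way.
\item \emph{End of Step 2.} The garbled final inequality there can simply be deleted, since Step 3 supersedes it.
\end{itemize}
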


This is Schaeffer's one-sided covering lemma
\cite{SchaefferAsymptotic}, in the precise form recorded in
\cite[Lemma~3.4]{ChenLi}. That lemma is stated for \(t<T\). To obtain the
endpoint when \(\delta>d(T)\), take \(t_n\uparrow T\) and
\(\delta_n=\delta-(T-t_n)\). For large \(n\), continuity of \(d\) gives
\(\delta_n\ge d(t_n)\), and passage to the limit yields the assertion.
The case \(\delta=d(T)<T\) follows by first approximating \(\delta\) from
above, while \(\delta=d(T)=T\) follows by applying the lemma on
\([0,t_n]\) and sending \(t_n\uparrow T\).

\subsection{The running envelope and record characteristics}

We now justify the record-time construction associated with \eqref{eq:envelope-def}.

\begin{lemma}[Continuity and attainment of the running energy]\label{lem:H-continuity}
The function \(\cH\) defined in \eqref{eq:envelope-def} is continuous and
nondecreasing. For every \(t\ge0\), there exist a latest time \(T\in[0,t]\)
and a point \((x_*,v_*)\in\supp f(T)\) such that
\begin{equation}
 \LE(T,x_*,v_*)=\cH(T)=\cH(t). \label{eq:record-attainment}
\end{equation}
\end{lemma}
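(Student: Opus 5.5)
The plan is to write \(\cH(t)=\max_{0\le s\le t} h(s)\), where
\[
 h(s):=\max_{(x,v)\in\supp f(s)}\LE(s,x,v),
\]
and to show that \(h\) is continuous on \([0,\infty)\). Once this is done, all assertions follow quickly.

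Monotonicity of \(\cH\) is immediate, since the maximum is taken over an increasing family of times. The running maximum of a continuous function is continuous, so \(\cH\) is continuous. For attainment, fix \(t\) and consider the set \(\{s\in[0,t]:h(s)=\cH(t)\}\). This set is nonempty because \(h\) is continuous on the compact interval \([0,t]\), and it is closed for the same reason. Let \(T\) be its maximum; this is the latest record time. Then \(\cH(T)=\cH(t)\), since \(\cH(T)\ge h(T)=\cH(t)\ge\cH(T)\). Finally, \(h(T)\) is attained at some \((x_*,v_*)\in\supp f(T)\), because \(\LE(T,\cdot,\cdot)\) is continuous and \(\supp f(T)\) is compact. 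This gives \eqref{eq:record-attainment}.

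To prove continuity of \(h\), transport the support back to time zero. By \eqref{eq:f-characteristics} and the fact that the characteristic flow is a homeomorphism of phase space, we have
\[
 \supp f(s)=\bigl\{(X(s;0,y,u),V(s;0,y,u)):(y,u)\in\supp f_0\bigr\}.
\]
Hence
\[
 h(s)=\max_{(y,u)\in\supp f_0} G(s,y,u),
 \qquad
 G(s,y,u):=\LE\bigl(s,X(s;0,y,u),V(s;0,y,u)\bigr).
\]
The map \(G\) is jointly continuous on \([0,t+1]\times\supp f_0\). Indeed, the classical characteristic flow is continuous in \((s,y,u)\), the factor \(1+s\) in \(w\) is harmless, and \(U\) is jointly continuous in \((s,x)\). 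The continuity of \(U\) follows from \(\rho\in C([0,t+1];L^1\cap L^\infty)\) together with the \(L^{5/3}\)/mass splitting used in \cref{lem:U-decay}. Applied to differences \(\rho(s)-\rho(s')\), that splitting gives uniform-in-\(x\) continuity in time, and continuity in \(x\) comes from the classical regularity. A maximum of a jointly continuous function over a fixed compact parameter set is continuous; this follows from uniform continuity on the compact product. Therefore \(h\) is continuous.

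The only point needing care is the identification of \(\supp f(s)\) with the image of \(\supp f_0\), which is where the closure in the definition of support appears. Because the flow is a homeomorphism and \(f(s)=f_0\circ\Phi_s^{-1}\), where \(\Phi_s\) denotes the time-\(s\) flow map, the set \(\{f(s)>0\}\) is exactly the image of \(\{f_0>0\}\). Taking closures commutes with a homeomorphism, so the identification holds. I expect this step to be the main, though mild, obstacle; everything else is compactness.
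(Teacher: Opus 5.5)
Your proof is correct and follows essentially the same route as the paper. Both arguments pull the support back to \(\supp f_0\) via the characteristic flow, use joint continuity of \(\LE\) along the flow on a compact parameter set to get continuity and attainment, and take the latest element of the closed, nonempty set of maximizing times; your added details on the continuity of \(U\) and on \(\supp f(s)=\Phi_s(\supp f_0)\) fill in steps the paper states in one line.
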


\begin{proof}
Let \(K_0=\supp f_0\). By invariance of the support under the characteristic flow,
\[
 \cH(t)=
 \max_{\substack{0\le s\le t\\(x_0,v_0)\in K_0}}
 \LE\bigl(s,X(s;0,x_0,v_0),V(s;0,x_0,v_0)\bigr).
\]
The function under the maximum is jointly continuous in
\((s,x_0,v_0)\). Compactness gives attainment. Uniform continuity on bounded
time intervals gives the continuity of \(\cH\), while monotonicity is
immediate. The set of maximizing times is compact and nonempty, so it has a
latest element \(T\), which proves \eqref{eq:record-attainment}.
\end{proof}

The estimate \eqref{eq:chen-li-support} gives the a priori bound used to control lower-order terms.

\begin{lemma}[A priori growth of the labeled-energy envelope]\label{lem:H-baseline}
For every \(t\ge0\),
\begin{equation}
 \cH(t)^{1/2}
 \le C\tscale{t}^{2/15}\tlog{t}^{8/15}. \label{eq:H-baseline}
\end{equation}
Moreover,
\begin{equation}
 \abs{w(s,x,v)}\le\sqrt{2\cH(t)}
 \quad\text{whenever }0\le s\le t,
 \ (x,v)\in\supp f(s). \label{eq:w-by-H}
\end{equation}
\end{lemma}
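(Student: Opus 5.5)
The plan is to bound the two pieces of the labeled energy separately, using the a priori support estimate \eqref{eq:chen-li-support} for the kinetic part and \cref{lem:U-decay} for the potential part. By \cref{lem:H-continuity}, for each \(t\ge0\) there are a time \(s\in[0,t]\) and a point \((x,v)\in\supp f(s)\) with \(\cH(t)=\frac12\abs{w(s,x,v)}^2+U(s,x)\). It therefore suffices to bound this single quantity.

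First we handle the kinetic part. \cref{prop:known-estimates} states that \eqref{eq:chen-li-support} holds, and in particular it controls \(\theta(s)\). Hence
\[
 \abs{w(s,x,v)}\le\theta(s)\le C\tscale{s}^{2/15}\tlog{s}^{8/15}\le C\tscale{t}^{2/15}\tlog{t}^{8/15},
\]
where the last step uses that the right-hand side is nondecreasing in \(s\). If one prefers to argue from \(R_f\) alone, the bound for \(\theta\) follows from the characteristic equation: \(\frac{\dd}{\dd s}w_*=E-w_*/\tscale{s}\), so \(\frac{\dd}{\dd s}(\tscale{s}w_*)=\tscale{s}E\). Integrating this and using \(\norm{E(s)}_{L^\infty}\le C\) (via \cref{lem:coulomb-interpolation} with \(p=5/3\) and \eqref{eq:rho53}), together with the compactness of \(\supp f_0\), gives the same kind of bound.

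Next, \cref{lem:U-decay} gives \(U(s,x)\le C\). Consequently
\[
 \cH(t)\le C\tscale{t}^{4/15}\tlog{t}^{16/15}+C,
\]
and taking square roots yields \eqref{eq:H-baseline}, since the power of \(t\) dominates the constant.

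For \eqref{eq:w-by-H}, observe that \(U\ge0\). Hence for \(s\le t\) and \((x,v)\in\supp f(s)\) we have \(\frac12\abs{w}^2\le\LE(s,x,v)\le\cH(s)\le\cH(t)\), using the definition \eqref{eq:envelope-def} and the monotonicity of \(\cH\). No step here is a real obstacle. The only point needing care is that the \(\theta\) bound is genuinely available, either directly from \eqref{eq:chen-li-support} or from \(R_f\) plus the characteristic equation.
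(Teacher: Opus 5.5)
Your main argument is correct and is the paper's proof: bound $\frac12\abs{w}^2$ by $\frac12\theta(s)^2$ using \eqref{eq:chen-li-support}, bound $U$ by \cref{lem:U-decay}, maximize over $s\le t$, and read off \eqref{eq:w-by-H} from $U\ge0$ and the definition of $\cH$. Your optional aside is wrong, however: integrating $(\tscale{s}w_*)'=\tscale{s}E$ with only $\norm{E(s)}_{L^\infty}\le C$ gives $\abs{w_*(t)}\le C\tscale{t}$, not the $2/15$ rate; to get the $\theta$ bound from $R_f$, use the position equation instead, namely $\abs{X(t)}\le\abs{X(0)}+t\max_{s\le t}R_f(s)$, which gives $\abs{w}\le R_f(t)+\abs{X(t)}/\tscale{t}\le C\tscale{t}^{2/15}\tlog{t}^{8/15}$.
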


\begin{proof}
By \eqref{eq:chen-li-support} and \cref{lem:U-decay},
\[
 \max_{(x,v)\in\supp f(s)}\LE(s,x,v)
 \le\frac12\theta(s)^2+\norm{U(s)}_\infty
 \le C\tscale{s}^{4/15}\tlog{s}^{16/15}+C.
\]
Taking the maximum over \(0\le s\le t\) proves
\eqref{eq:H-baseline}. The second assertion follows immediately from the definition of \(\cH\).
\end{proof}

\begin{lemma}[Comparability on a record characteristic]\label{lem:record-comparison}
There exist thresholds \(T_*\ge1\), \(Q_*\ge1\), constants
\(0<c_q<C_q\), and a fixed \(0<\kappa<1\) with the following property.
Let \(T\ge T_*\) be a record time, so that
\(\LE(T,x_*,v_*)=\cH(T)\) for some
\((x_*,v_*)\in\supp f(T)\). Let \((X_*,V_*)\) be the corresponding characteristic and set
\[
 Q:=\cH(T)^{1/2}.
\]
If \(Q\ge Q_*\), define
\begin{equation}
 \delta:=\min\left\{\frac T5,\cD(T,\kappa Q)\right\}. \label{eq:record-delta}
\end{equation}
Then
\begin{equation}
 c_qQ\le\abs{w_*(s)}\le C_qQ
 \quad\text{for every }s\in[T-\delta,T]. \label{eq:q-record-comparison}
\end{equation}
\end{lemma}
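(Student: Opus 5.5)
The plan is to first get two-sided control of \(\abs{w_*(T)}\) at the record time, and then propagate it backwards over \([T-\delta,T]\) using the smallness of the impulse.

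\textbf{Lower bound at \(s=T\).} At the record time,
\[
\tfrac12\abs{w_*(T)}^2+U(T,X_*(T))=Q^2 .
\]
\Cref{lem:U-decay} gives \(U\le C\tscale{T}^{-1/2}\le C\). Choosing \(Q_*\) large enough that \(C\le Q^2/2\), we get \(\abs{w_*(T)}^2\ge Q^2\), that is, \(\abs{w_*(T)}\ge Q\).

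\textbf{Upper bound on the whole interval.} By \eqref{eq:w-by-H} applied with \(t=T\), every \(s\le T\) satisfies \(\abs{w_*(s)}\le\sqrt2\,Q\). So \(C_q=\sqrt2\) works, and no further argument is needed for the upper bound.

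\textbf{Lower bound on \([T-\delta,T]\).} From the equation \(w_*'=E(s,X_*)-w_*/\tscale{s}\), set \(m(s)=\tscale{s}w_*(s)\). Then
\[
m'(s)=\tscale{s}E(s,X_*(s)).
\]
For \(s\in[T-\delta,T]\), integrating gives
\[
\abs{m(T)-m(s)}\le \tscale{T}\int_{T-\delta}^T\abs{E(u,X_*(u))}\dd u\le\tscale{T}F_T(\delta).
\]
Here \((X_*(T),V_*(T))\in\supp f(T)\), so the maximum defining \(F_T\) covers this characteristic. Since \(\delta\le\cD(T,\kappa Q)\), \cref{lem:Delta-closed} gives \(F_T(\delta)\le\kappa Q\). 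Hence
\[
\tscale{s}\abs{w_*(s)}\ge\tscale{T}\bigl(\abs{w_*(T)}-\kappa Q\bigr)\ge\tscale{T}(1-\kappa)Q .
\]
Dividing by \(\tscale{s}\le\tscale{T}\) yields \(\abs{w_*(s)}\ge(1-\kappa)Q\). Fixing \(\kappa=1/2\) gives \(c_q=1/2\).

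\textbf{Role of the remaining hypotheses.} The restriction \(\delta\le T/5\) guarantees \(T-\delta\ge 0\), and more importantly \(\tscale{T}/\tscale{s}\le 5/4\). The latter is not needed for the lower bound, since dividing by the smaller factor \(\tscale{s}\) only helps, but it would be used if one instead argued directly with \(w_*\) and the damping term. The threshold \(T_*\ge1\) plays only a cosmetic role. The case \(\delta=0\) is trivial.

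\textbf{Main obstacle.} The one step that needs care is handling the linear damping term \(-w_*/\tscale{s}\) without losing a factor. The integrating factor \(\tscale{s}\) removes this term exactly, so no Gronwall-type loss arises. All the other constraints come directly from \(Q_*\) absorbing the bounded potential at the record time.
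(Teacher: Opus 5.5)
Your argument is correct and is essentially the paper's proof. The record identity together with \cref{lem:U-decay} controls $\abs{w_*(T)}$, and the integrating factor $(1+s)w_*(s)$ together with \cref{lem:Delta-closed} propagates the bound backwards over $[T-\delta,T]$. Your minor variations are valid simplifications: you take the upper bound directly from \eqref{eq:w-by-H}, and you use $(1+s)\le(1+T)$ in the favorable direction, so the explicit choice $\kappa=1/2$ works without invoking $\delta\le T/5$.
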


\begin{proof}
At the record time,
\[
 \frac12\abs{w_*(T)}^2
 =Q^2-U(T,x_*).
\]
By \eqref{eq:U-decay}, choosing \(Q_*\) sufficiently large gives
\begin{equation}
 cQ\le\abs{w_*(T)}\le CQ. \label{eq:q-at-T}
\end{equation}
Furthermore,
\[
 (\tscale{s}w_*(s))'=\tscale{s}E(s,X_*(s)),
\]
so for \(s\in[T-\delta,T]\),
\begin{equation}
 w_*(s)=\frac{1+T}{\tscale{s}}w_*(T)
 -\frac1{\tscale{s}}\int_s^T\tscale{u}E(u,X_*(u))\dd u. \label{eq:q-backward}
\end{equation}
Since \(\delta\le T/5\), one has \(\tscale{s}\asymp1+T\). By
\cref{lem:Delta-closed} and \(\delta\le\cD(T,\kappa Q)\),
\[
 \int_s^T\abs{E(u,X_*(u))}\dd u\le\kappa Q.
\]
Thus the second term in \eqref{eq:q-backward} is bounded by
\(C\kappa Q\). Fixing \(\kappa\) sufficiently small relative to the lower
constant in \eqref{eq:q-at-T}, and then fixing \(c_q,C_q\), proves the result.
\end{proof}

\section{Weighted estimates for the singular current}\label{sec:current}

The purpose of this section is to prove the weighted-current estimate stated in
\cref{prop:local-current}. Fix throughout this section a record time \(T\) satisfying the hypotheses of
\cref{lem:record-comparison}. Write
\[
 Q=\cH(T)^{1/2},
\]
and let \(\delta\) be defined by \eqref{eq:record-delta}. Since
\(\delta\le T/5\), for \(s\in[T-\delta,T]\) we have
\begin{equation}
 \tscale{s}\asymp\tscale{T},
 \qquad
 \tlog{s}\asymp\Tlog. \label{eq:record-comparable-time}
\end{equation}

\begin{proposition}[Weighted current estimate on a record interval]\label{prop:local-current}
At every sufficiently large record time with \(Q\ge Q_*\), and for every \(A>0\),
\begin{equation}
\begin{aligned}
 \int_{T-\delta}^{T}\abs{\cJ_*(s)}\dd s
 \le C\delta\Bigg[
 &\frac{Q}{\tscale{T}}+A\frac{Q}{\tscale{T}}
 +A\frac{\Tlog^2}{Q\tscale{T}}\\
 &+A\frac{\Tlog^{4/7}}{\tscale{T}^{5/7}}
 +\frac{Q}{A\tscale{T}}
 \Bigg].
\end{aligned}\label{eq:local-current}
\end{equation}
\end{proposition}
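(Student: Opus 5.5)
The estimate will come from splitting the integrand of \(\cJ_*\) in phase space. At each time \(s\in[T-\delta,T]\) and each phase point \((x,v)\) of the support, introduce the relative velocity \(a=v-V_*(s)\) and the source-dependent crossing time \(d(s,x,v)\) of \cref{lem:rescaling}. Choose \(\eps\), \(\kappa\) and \(C_w=\sqrt2\), so that by \eqref{eq:w-by-H} and \eqref{eq:d-global} one has \(d\le\min\{s/5,\cD(s,\kappa Q)\}\). The support is then partitioned into three sets.

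The \emph{good} set \(\cG\) consists of points with \(\abs{a}\ge A^{-1}Q\)-type separation and with distance \(\abs{X_*(s)-x}\) larger than a cutoff \(r\). The \emph{bad} set \(\cB\) consists of points with small relative velocity, \(\abs{a}\lesssim Q/A\). The \emph{unresolved} set \(\cU\) consists of points close to the target, \(\abs{X_*-x}\le r\), but with large relative velocity.

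On each piece we keep the weight \(w\) inside the integral. The bounds are obtained as follows.

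On \(\cB\), we bound \(\abs{w}\le\abs{w_*}+\abs{a}+\abs{X_*-x}/\tscale{s}\). The part carrying \(\abs{w_*}\lesssim Q\) is controlled via \eqref{eq:q-record-comparison} and \cref{lem:coulomb-interpolation} applied to the density of slow sources. Their velocity volume is \(\lesssim (Q/A)^3\), which gives the term \(Q/(A\tscale{T})\) after using the \(L^{5/3}\) decay \eqref{eq:rho53}. The radial piece \(\abs{X_*-x}/\tscale{s}\) cancels one power of the kernel and integrates against the mass, giving \(Q/\tscale{T}\).

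On \(\cG\), we do the following. Pull the integrand back along source characteristics over the local window \([s-d,s]\) using \eqref{eq:f-characteristics} and measure preservation. Then apply \cref{lem:geometric-crossing} to \(Z(u)=X_*(u)-X(u;s,x,v)\). The hypothesis \(\abs{Z'(u)-Z'(s)}\le\eta\abs{a}\) follows from \(d\le\cD(s,\eps\abs a)\) and \cref{lem:Delta-closed} with \(\eps\) small. This converts \(\int\abs{X_*-X}^{-2}\) in time into \((\abs a r)^{-1}\).

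Next, \cref{lem:time-extension} transfers the estimate from the source-dependent windows \(d\) to the fixed interval \([T-\delta,T]\) at the cost of a factor \(2\). The resulting weighted sums are controlled using the moment bound \eqref{eq:w2} in the quadratic \(w\)-branch and mass in the linear branch. The two branches of \(d\) in \cref{lem:rescaling} produce \(A\,\Tlog^2/(Q\tscale{T})\) and \(A\,\Tlog^{4/7}\tscale{T}^{-5/7}\), respectively. The factor \(A\) arises from \(\abs a\gtrsim Q/A\) in the denominators of \(1/d\).

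On \(\cU\), the near-field part with \(\abs{a}\) large is handled by the same crossing argument. With the cutoff \(r\) optimized, this part is absorbed into \(AQ/\tscale{T}\).

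The main obstacle will be the good-set computation. There one must keep \(w\), rather than \(\abs{v}\), as the weight throughout the pullback and the crossing estimate, so that \eqref{eq:w2} can be used without an inverse-cutoff loss. The difficulty is that \(w\) at the pulled-back time differs from \(w\) at time \(s\) by an impulse. This impulse must be bounded by \(\eps\abs{w}\) using \(d\le\cD(s,\eps\abs w)\), which is exactly why the \(w\)-branches were included in the definition of \(d\).

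I would first try to establish the pointwise-in-time inequality \(\int_{s-d}^s\alpha\le\int_{s-d}^s\beta\), with \(\alpha\) the \(\cG\)-part of \(\abs{\cJ_*}\) and \(\beta\) the bracketed quantity. This sets up \cref{lem:time-extension} with \(\delta\ge d(T)\), which holds by \eqref{eq:d-global}. Integrating then gives \eqref{eq:local-current}.
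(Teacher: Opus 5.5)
There is a genuine gap: your decomposition does not produce the stated terms, and two of its key quantitative claims fail.

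\emph{The low-velocity region.} Take the set $\{\abs{a}\lesssim Q/A\}$ and put $\bar\rho=\int_{\abs{a}\le Q/A}f\dd v$. Then $\norm{\bar\rho}_{L^\infty}\lesssim(Q/A)^3$, so \cref{lem:coulomb-interpolation} with \eqref{eq:rho53} gives $Q\,\tscale{T}^{-1/3}(Q/A)^{4/3}$, not $Q/(A\tscale{T})$. The ratio of the two is $Q^{4/3}A^{-1/3}\tscale{T}^{2/3}$. This is large unless $A\gtrsim Q^4\tscale{T}^2$, and in that regime the $A$-terms are useless; moreover the $A$ chosen in \cref{prop:optimized-current} tends to $0$.

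In the paper the slow cutoff does not involve $A$ at all. It is $P=2\tscale{T}^{-1/2}$, imposed on \emph{both} $\abs{a}$ and $\abs{w}$, together with the far field $\abs{z}\ge\tscale{s}P/2$. $P$ is tuned so that $\tscale{T}^{-1/3}P^{4/3}$ and $\tscale{T}^{-2}P^{-2}$ are both $\asymp\tscale{T}^{-1}$, which yields the $A$-free term $Q/\tscale{T}$. The low-$\abs{w}$ cutoff is not optional. It underlies the comparisons in \cref{lem:freezing}, and it guarantees $\alpha=0$ where $d=0$ in \cref{lem:time-extension}.

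\emph{The near field.} The region $\abs{z}\le r$ cannot be handled by the crossing argument. \cref{lem:geometric-crossing} needs $\abs{Z}\ge r_0$ on $\Omega$, and a source passing the target with impact parameter $b$ contributes $\asymp1/(\abs{a}b)$, which is unbounded. The near field must instead be integrated at fixed time in phase space, using $\int_{\abs{z}\le r}\abs{z}^{-2}\dd z=4\pi r$. With a velocity-independent $r$ this leaves $\int\abs{w}r\dd v\sim Q^4r$, which is far too large.

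The missing idea is the velocity-dependent radius $r=A/\bigl(\abs{a}(\tscale{s}^{-1}+\abs{w}^2)d\bigr)$, which makes the near and crossing estimates dual.
\begin{itemize}
\item On the near set, $\int\abs{w}r\dd v$, split by which of $d_1,\dots,d_5$ is active, gives $AQ/\tscale{T}$, $A\Tlog^2/(Q\tscale{T})$ and $A\Tlog^{4/7}\tscale{T}^{-5/7}$. The factor $A$ comes from $r\propto A$, not from $\abs{a}\gtrsim Q/A$. Retaining $\abs{w}$ inside the integral matters here, in $\cB_4$ and $\cB_5$.
\item On the crossing set, $1/(\abs{a_t}r_td_t)=(\tscale{t}^{-1}+\abs{W_t}^2)/A$. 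Apply \cref{lem:time-extension} along each source characteristic, with $\alpha=\one_{\cU}/\abs{Z}^2$ and $d(s)=d(s,X(s),V(s))$. Then measure preservation, the crude bound $\abs{w}\le\sqrt2Q$, and \eqref{eq:w2} give $Q/(A\tscale{T})$.
\end{itemize}
Your attribution of terms is therefore essentially reversed. Also, the extension lemma cannot be applied to the integrated quantity $\abs{\cJ_*}$ with $\beta$ equal to the final bracket, because $d$ depends on the source.

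You are right that the $w$-branches of $d$ exist to control the impulse in $W$. But that is the short \cref{lem:source-stability}, not where the difficulty lies.
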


All the remaining subsections are devoted to the proof of
\cref{prop:local-current}. We first introduce the
\(\cG\)--\(\cB\)--\(\cU\) decomposition, then estimate the good region
\(\cG\), the five near-region branches of \(\cB\), and finally the crossing
region \(\cU\). The resulting bounds are assembled at the end of the section.

\subsection{The three-region mechanism}\label{subsec:three-regions}

By \eqref{eq:J-def}, the problem is reduced to estimating
\[
 \iiint_{[T-\delta,T]\times\R^3\times\R^3}
 \frac{\abs{w} f(s,x,v)}{\abs{x-X_*(s)}^2}
 \dd s\dd x\dd v.
\]
The decomposition below separates three mechanisms. The region \(\cG\)
contains low self-similar velocity, low relative velocity, or large spatial
separation; it is controlled directly by the dispersive density bounds and
mass conservation. The region \(\cB\) is spatially close to the target
trajectory, and the weight \(w\) is retained in its radial integration. The
remaining region \(\cU\) is controlled by the time during which two
characteristics can cross a small neighborhood of one another. The five
branches below encode the adaptive time scale of \cite{ChenLi}: they are five
realizations of the same near-region mechanism, not five separate ideas.

Choose the low-velocity cutoff
\begin{equation}
 P:=2\tscale{T}^{-1/2}. \label{eq:P-choice}
\end{equation}
Let \(A>0\) be a free parameter. For
\(s\in[T-\delta,T]\), set
\begin{equation*}
 z=x-X_*(s),
 \qquad
 a=v-V_*(s),
 \qquad
 w=v-\frac{x}{\tscale{s}}.
\end{equation*}

Fix \(0<\eps<1/20\) once and for all. Choose the constant \(c_d\) supplied
by \cref{lem:rescaling}, reducing it if necessary so that both
\eqref{eq:d-Delta} and \eqref{eq:d-global} hold with
\(C_w=\sqrt2\) and with the fixed \(\kappa\) from
\cref{lem:record-comparison}. Define the five local-time branches by
\begin{align*}
 d_1&=c_ds,
 &d_2&=c_d\frac{\tscale{s}\abs{a}^2}{\tlog{s}},
 &d_3&=c_d\frac{\tscale{s}^{5/7}\abs{a}}{\tlog{s}^{4/7}},\\
 d_4&=c_d\frac{\tscale{s}\abs{w}^2}{\tlog{s}},
 &d_5&=c_d\frac{\tscale{s}^{5/7}\abs{w}}{\tlog{s}^{4/7}}.
\end{align*}

With these branches, set
\[
 d(s,x,v):=\min_{1\le j\le5}d_j(s,x,v).
\]
When \(d>0\), define
\begin{equation*}
 r(s,x,v):=
 \frac{A}{\abs{a}\bigl(\tscale{s}^{-1}+\abs{w}^2\bigr)d(s,x,v)}.
\end{equation*}
When \(d=0\), set \(r=0\). The same formulas define \(d\) and \(r\) for every
\(0\le s\le T\); this extension is used only in the time-covering argument.

On the record cylinder \([T-\delta,T]\times\R^3\times\R^3\), set
\begin{align*}
 \cG:=\biggl\{(s,x,v):\;&\abs{w}\le P
 \ \text{or}\ \abs{a}\le P
 \ \text{or}\ \abs{z}\ge\frac12\tscale{s}P\biggr\},\\
 \cB&:=\bigl\{(s,x,v):\abs{z}\le r(s,x,v)\bigr\}\setminus\cG,\\
 \cU&:=\bigl([T-\delta,T]\times\R^3\times\R^3\bigr)
 \setminus(\cG\cup\cB).
\end{align*}
The near set is divided into five disjoint pieces: a point of \(\cB\) is
assigned to \(\cB_j\) if \(j\) is the smallest index for which
\(d=d_j\). We extend \(\one_{\cU}\) by zero outside the record interval.
Here and below, a triple integral over a subset of the record cylinder is
understood with respect to \(\dd s\dd x\dd v\).

The identity
\begin{equation}
 w_*(s)=w-a+\frac z{\tscale{s}} \label{eq:q-w-a-z}
\end{equation}
provides the separation needed in all near-region estimates.

\begin{lemma}[Separation on the near region]\label{lem:B-separation}
After increasing \(T_*\) and \(Q_*\), there exist constants
\(0<c_B<C_B\) such that on \(\cB\),
\begin{equation}
 \abs{a}+\abs{w}\ge c_BQ,
 \qquad
 P<\abs{a},\abs{w}\le C_BQ. \label{eq:B-separation}
\end{equation}
\end{lemma}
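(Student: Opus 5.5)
The plan is to read off each inequality in \eqref{eq:B-separation} from the definition of \(\cB\), the record comparison \eqref{eq:q-record-comparison}, the identity \eqref{eq:q-w-a-z}, and the envelope bound \eqref{eq:w-by-H}.

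\emph{Upper bounds.} A point of \(\cB\) at time \(s\in[T-\delta,T]\) satisfies \(\abs{z}\le r\), and we only need the source phase point to lie in \(\supp f(s)\), since the integrand of \(\cJ_*\) vanishes elsewhere. We may therefore restrict \(\cB\) to the support, or note that every estimate is weighted by \(f\). Since \(s\le T\), \eqref{eq:w-by-H} gives \(\abs{w}\le\sqrt2\,Q\). For \(a\), write \(a=w-w_*(s)+z/\tscale{s}\), which is \eqref{eq:q-w-a-z} rearranged. By \eqref{eq:q-record-comparison}, \(\abs{w_*(s)}\le C_qQ\). Because the point is not in \(\cG\), we have \(\abs{z}<\tfrac12\tscale{s}P\), so \(\abs{z}/\tscale{s}<P/2=\tscale{T}^{-1/2}\le1\le Q\) once \(Q\ge Q_*\ge1\). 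Hence \(\abs{a}\le(\sqrt2+C_q+1)Q=:C_BQ\).

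\emph{Lower bounds.} The bounds \(\abs{a}>P\) and \(\abs{w}>P\) hold by definition of the complement of \(\cG\). For the separation bound we use \eqref{eq:q-w-a-z} again:
\[
 c_qQ\le\abs{w_*(s)}\le\abs{w}+\abs{a}+\frac{\abs{z}}{\tscale{s}}\le\abs{w}+\abs{a}+\tscale{T}^{-1/2}.
\]
After increasing \(Q_*\) so that \(\tscale{T}^{-1/2}\le1\le c_qQ/2\), this gives \(\abs a+\abs w\ge\tfrac{c_q}{2}Q=:c_BQ\).

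The only delicate point is the support issue in the upper bound for \(\abs{w}\). The envelope bound \eqref{eq:w-by-H} applies only on \(\supp f(s)\), whereas \(\cB\) is defined on the whole cylinder. I would handle this by intersecting \(\cB\) with \(\{f>0\}\), which is harmless for every integral against \(f\), or by stating the lemma for points of \(\cB\cap\supp f\). Beyond that, the argument needs only the enlargement of \(Q_*\) (and \(T_*\)) described above and the ordering \(c_B=c_q/2<C_B\).
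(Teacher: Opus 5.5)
Your proof is correct and is essentially the paper's argument: the lower bound comes from the triangle inequality applied to \(w_*(s)=w-a+z/\tscale{s}\) together with \eqref{eq:q-record-comparison} and \(\abs{z}/\tscale{s}<P/2\), the bound \(\abs{w}\le\sqrt2Q\) comes from \eqref{eq:w-by-H}, and the bound on \(\abs{a}\) comes from the same identity rearranged. Your caveat is legitimate and is a point the paper's proof uses silently: \eqref{eq:w-by-H} only holds on \(\supp f(s)\), so the lemma should be read on \(\cB\cap\supp f\), which is harmless because every later integral carries the factor \(f\).
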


\begin{proof}
Since \(\cB\subset\cG^c\),
\[
 \frac{\abs{z}}{\tscale{s}}<\frac P2,
 \qquad
 \abs{a}>P,
 \qquad
 \abs{w}>P.
\]
Combining \eqref{eq:q-w-a-z} with
\eqref{eq:q-record-comparison},
\[
 \abs{a}+\abs{w}
 \ge\abs{w_*(s)}-\frac{\abs{z}}{\tscale{s}}
 \ge c_qQ-\frac P2.
\]
For large \(T\) and \(Q\ge Q_*\), this gives the first inequality. By
\eqref{eq:w-by-H}, \(\abs{w}\le\sqrt2Q\). Finally,
\[
 \abs{a}\le\abs{w}+\abs{w_*(s)}+\frac{\abs{z}}{\tscale{s}}
 \le C_BQ.
\]
\end{proof}

The dependence of one velocity variable on \(z\) in the other coordinate
system is the main bookkeeping issue in the radial estimates. We isolate it
now.

\begin{lemma}[Frozen complementary velocity]\label{lem:freezing}
Fix \(s\in[T-\delta,T]\). In \((z,a)\) coordinates define
\begin{equation*}
 \tw_s(a):=w_*(s)+a,
 \qquad
 w=\tw_s(a)-\frac z{\tscale{s}}.
\end{equation*}
In \((z,w)\) coordinates define
\begin{equation*}
 \ta_s(w):=w-w_*(s),
 \qquad
 a=\ta_s(w)+\frac z{\tscale{s}}.
\end{equation*}
On \(\cG^c\),
\begin{align}
 \frac12\abs{w}\le\abs{\tw_s(a)}\le\frac32\abs{w},
 &\qquad
 \tscale{s}^{-1}+\abs{\tw_s(a)}^2
 \asymp\tscale{s}^{-1}+\abs{w}^2, \label{eq:freeze-w-comparison}\\
 \frac12\abs{a}\le\abs{\ta_s(w)}\le\frac32\abs{a}.
 & \label{eq:freeze-a-comparison}
\end{align}
Moreover, suppose \(D\subset\cG^c\) is measurable and, in \((z,a)\) coordinates,
\(\abs{z}\le R_s(a)\) on \(D\), where
\(R_s:\R^3\to[0,\infty]\) is measurable. Then
\begin{equation}
 \iint_D\frac{\abs{w} f(s,x,v)}{\abs{z}^2}\dd z\dd a
 \le C\int_{\R^3}\abs{\tw_s(a)}R_s(a)\dd a. \label{eq:frozen-radial-a}
\end{equation}
If, in \((z,w)\) coordinates, \(\abs{z}\le R_s(w)\) on \(D\) for a
nonnegative measurable \(R_s\), then
\begin{equation}
 \iint_D\frac{\abs{w} f(s,x,v)}{\abs{z}^2}\dd z\dd w
 \le C\int_{\R^3}\abs{w} R_s(w)\dd w. \label{eq:frozen-radial-w}
\end{equation}
\end{lemma}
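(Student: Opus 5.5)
The comparisons follow from the definition of \(\cG^c\), and the two radial bounds follow by integrating \(\abs{z}^{-2}\) over a ball with the frozen velocity held fixed.

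\textbf{Comparisons.} On \(\cG^c\) we have \(\abs{z}/\tscale{s}<P/2\), \(\abs{w}>P\) and \(\abs{a}>P\). Hence
\[
 \abs{\tw_s(a)-w}=\frac{\abs{z}}{\tscale{s}}<\frac P2<\frac12\abs{w},
\]
which gives the first part of \eqref{eq:freeze-w-comparison}. The second part follows, since \(\abs{\tw_s(a)}^2\) and \(\abs{w}^2\) differ by at most a factor \(4\). The same computation, using \(\abs{\ta_s(w)-a}=\abs{z}/\tscale{s}<\abs{a}/2\), gives \eqref{eq:freeze-a-comparison}.

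\textbf{Bound \eqref{eq:frozen-radial-a}.} The changes of variables \((x,v)\mapsto(z,a)\) and \((x,v)\mapsto(z,w)\) at fixed \(s\) are affine with unit Jacobian, because \(w=v-x/\tscale{s}\) is a shear. Now work in \((z,a)\) coordinates.
\begin{itemize}
 \item On \(D\subset\cG^c\), the first comparison gives \(\abs{w}\le2\abs{\tw_s(a)}\).
 \item \cref{prop:known-estimates} gives \(f\le\norm{f}_{L^\infty}\le C\).
 \item The constraint \(\abs{z}\le R_s(a)\) holds on \(D\).
\end{itemize}
Therefore
\[
 \iint_D\frac{\abs{w}f}{\abs{z}^2}\dd z\dd a
 \le C\int_{\R^3}\abs{\tw_s(a)}\int_{\abs{z}\le R_s(a)}\abs{z}^{-2}\dd z\dd a
 =4\pi C\int_{\R^3}\abs{\tw_s(a)}R_s(a)\dd a.
\]
The key point is that, for fixed \(a\), the weight \(\abs{\tw_s(a)}\) no longer depends on \(z\). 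So the inner integral is the elementary radial integral \(\int_{\abs{z}\le R}\abs{z}^{-2}\dd z=4\pi R\). Measurability of the set \(\{\abs{z}\le R_s(a)\}\) is clear, and Tonelli's theorem justifies the iterated integration even when \(R_s=\infty\).

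\textbf{Bound \eqref{eq:frozen-radial-w}.} This is the same argument in \((z,w)\) coordinates, and it is easier, since \(\abs{w}\) is already a coordinate and needs no freezing. We bound \(f\le C\) and integrate \(\abs{z}^{-2}\) over the ball \(\abs{z}\le R_s(w)\).

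The only point needing care is the Jacobian bookkeeping together with the observation that the frozen velocity is \(z\)-independent; there is no real obstacle.
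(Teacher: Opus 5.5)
Your proposal is correct and matches the paper's proof essentially step for step. The comparisons come from \(\abs{z}/\tscale{s}<P/2\) together with \(\abs{a},\abs{w}>P\) on \(\cG^c\). The radial bounds follow from the unit-Jacobian shear, the \(L^\infty\) bound on \(f\), \(\abs{w}\le 2\abs{\tw_s(a)}\), and \(\int_{\abs{z}\le R}\abs{z}^{-2}\dd z=4\pi R\). Your Tonelli and measurability remarks are a harmless addition that the paper leaves implicit.
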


\begin{proof}
On \(\cG^c\), \(\abs{z}/\tscale{s}<P/2\), while
\(\abs{a}>P\) and \(\abs{w}>P\). Hence
\[
 \abs{\tw_s(a)-w}=\frac{\abs{z}}{\tscale{s}}<\frac12\abs{w},
\]
which proves the first comparison in \eqref{eq:freeze-w-comparison}; the
quadratic comparison follows immediately. The proof of
\eqref{eq:freeze-a-comparison} is identical.

Both affine maps
\((x,v)\mapsto(z,a)\) and
\((x,v)\mapsto(z,w)\) have unit Jacobian. Since
\(\norm{f(s)}_\infty\le\norm{f_0}_\infty\),
\[
 \int_{\abs{z}\le R}\frac{\dd z}{\abs{z}^2}=4\pi R.
\]
Using \(\abs{w}\asymp\abs{\tw_s(a)}\) on \(\cG^c\) gives
\eqref{eq:frozen-radial-a}; \eqref{eq:frozen-radial-w} is immediate.
\end{proof}

By \eqref{eq:J-def},
\begin{equation}
 \abs{\cJ_*(s)}
 \le C\iint_{\R^3\times\R^3}
 \frac{\abs{w} f(s,x,v)}{\abs{x-X_*(s)}^2}\dd x\dd v. \label{eq:J-upper}
\end{equation}
We estimate the three pieces of the decomposition separately.

\subsection{The good region}

\begin{lemma}[Contribution of \(\cG\)]\label{lem:G}
One has
\begin{equation*}
 \iiint_{\cG}\frac{\abs{w} f}{\abs{z}^2}
 \le C\delta\frac{Q}{\tscale{T}}.
\end{equation*}
\end{lemma}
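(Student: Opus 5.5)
The plan is to prove the estimate pointwise in time: for each \(s\in[T-\delta,T]\) I will show that the \((x,v)\)-integral of \(\abs{w}f/\abs{z}^2\) over the slice of \(\cG\) is at most \(CQ/\tscale{T}\). Integrating over the interval of length \(\delta\) then gives the claim. Since \(\cG\) is a union of three sets, I bound the integral by the sum of the integrals over
\[
\{\abs{w}\le P\},\qquad \{\abs{a}\le P\},\qquad \{\abs{z}\ge\tfrac12\tscale{s}P\},
\]
and treat each piece separately. Throughout, \eqref{eq:record-comparable-time} lets me replace \(\tscale{s}\) by \(\tscale{T}\).

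\emph{Low self-similar velocity.} On \(\{\abs{w}\le P\}\) I bound the weight by \(\abs{w}\le P\). I then introduce the partial density
\[
g(x):=\int_{\abs{v-x/\tscale{s}}\le P}f(s,x,v)\dd v .
\]
It satisfies \(g\le CP^3\) because \(f\) is bounded, and \(g\le\rho\), so that \(\norm{g}_{L^{5/3}}\le C\tscale{T}^{-3/5}\) by \eqref{eq:rho53}. \Cref{lem:coulomb-interpolation} with \(p=5/3\) gives
\[
\sup_y\int\frac{g(x)}{\abs{x-y}^2}\dd x\le C\tscale{T}^{-1/3}P^{4/3}=C\tscale{T}^{-1}.
\]
The slice contribution is therefore at most \(CP\tscale{T}^{-1}=C\tscale{T}^{-3/2}\), which is \(\le CQ/\tscale{T}\) since \(Q\ge Q_*\ge1\).

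\emph{Low relative velocity.} On \(\{\abs{a}\le P\}\) I use instead the uniform bound \(\abs{w}\le\sqrt2Q\) from \eqref{eq:w-by-H}. The partial density \(g(x):=\int_{\abs{v-V_*(s)}\le P}f\dd v\) again satisfies \(g\le CP^3\) and \(g\le\rho\). The same interpolation then gives a contribution of at most \(C\sqrt2\,Q\tscale{T}^{-1}\). This branch is the one that produces the factor \(Q\) in the final bound.

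\emph{Far field.} On \(\{\abs{z}\ge\frac12\tscale{s}P\}\) we have \(\abs{z}^{-2}\le 4\tscale{s}^{-2}P^{-2}=\tscale{s}^{-1}\). By Cauchy--Schwarz, \eqref{eq:w2} and mass conservation,
\[
\iint\abs{w}f\le\Bigl(\iint\abs{w}^2f\Bigr)^{1/2}\Bigl(\iint f\Bigr)^{1/2}\le C\tscale{T}^{-1/2}.
\]
The slice contribution is therefore \(\le C\tscale{T}^{-3/2}\le CQ/\tscale{T}\).

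There is no real obstacle in this argument. The only point needing care is that \(\rho\) has no useful \(L^\infty\) bound, so the interpolation must be applied to the velocity-truncated densities \(g\), whose \(L^\infty\) norm is \(O(P^3)\). The choice \(P=2\tscale{T}^{-1/2}\) in \eqref{eq:P-choice} is precisely what makes \(P^{4/3}\tscale{T}^{-1/3}=C\tscale{T}^{-1}\).
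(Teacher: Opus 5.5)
Your argument is correct and is essentially the paper's proof. The paper also applies \cref{lem:coulomb-interpolation} with $p=5/3$ to a velocity-truncated density whose $L^\infty$ norm is $O(P^3)$ (it merges both balls into a single $\bar\rho$), and it bounds the far field via $\abs{z}^{-2}\le 4\tscale{s}^{-2}P^{-2}$; the difference is that it uses the crude weight $\abs{w}\le\sqrt2Q$ with mass conservation on every piece, whereas your sharper treatment of the $\{\abs{w}\le P\}$ and far-field pieces improves them to $O(\tscale{T}^{-3/2})$ without changing the final bound, which is dictated by the $\{\abs{a}\le P\}$ piece. One harmless slip: $4\tscale{s}^{-2}P^{-2}=\tscale{T}\tscale{s}^{-2}$ rather than $\tscale{s}^{-1}$, but the two are comparable by \eqref{eq:record-comparable-time}.
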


\begin{proof}
For fixed \(s\), define
\[
 \bar\rho(s,x):=
 \int_{\{\abs{w}\le P\}\cup\{\abs{a}\le P\}}
 f(s,x,v)\dd v.
\]
At fixed \(x\), both velocity conditions describe balls of radius \(P\). Thus
\[
 \norm{\bar\rho(s)}_{L^\infty}\le CP^3,
 \qquad
 \norm{\bar\rho(s)}_{L^{5/3}}
 \le\norm{\rho(s)}_{L^{5/3}}.
\]
Applying \cref{lem:coulomb-interpolation} with \(p=5/3\) and using
\eqref{eq:rho53},
\begin{equation}
 \sup_y\int_{\R^3}
 \frac{\bar\rho(s,x)}{\abs{x-y}^2}\dd x
 \le C\tscale{s}^{-1/3}P^{4/3}. \label{eq:G-near}
\end{equation}
On the remaining part of \(\cG\),
\(\abs{z}\ge\tscale{s}P/2\), and mass conservation gives
\begin{equation}
 \iint_{\abs{z}\ge\tscale{s}P/2}
 \frac{f(s,x,v)}{\abs{z}^2}\dd x\dd v
 \le C\tscale{s}^{-2}P^{-2}. \label{eq:G-far}
\end{equation}
By \eqref{eq:w-by-H}, \(\abs{w}\le\sqrt2Q\) throughout the record interval. Combining
\eqref{eq:G-near}--\eqref{eq:G-far},
\[
 \iiint_{\cG}\frac{\abs{w} f}{\abs{z}^2}
 \le C\delta Q
 \left(\tscale{T}^{-1/3}P^{4/3}+\tscale{T}^{-2}P^{-2}\right).
\]
With \(P=2\tscale{T}^{-1/2}\), both terms in parentheses are comparable to \(\tscale{T}^{-1}\), proving the claim.
\end{proof}

\subsection{The five near-region branches}

We estimate the five branches in turn. The \(a\)-branches and the
\(w\)-branches occur in parallel; the decisive point is to retain the factor
\(\abs{w}\) in the two \(w\)-branch integrations.

\begin{lemma}[Contribution of \(\cB_1\)]\label{lem:B1}
\begin{equation}
 \iiint_{\cB_1}\frac{\abs{w} f}{\abs{z}^2}
 \le CA\delta\frac{Q}{\tscale{T}}. \label{eq:B1}
\end{equation}
\end{lemma}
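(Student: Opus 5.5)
On \(\cB_1\) one has \(d=d_1=c_ds\), and for \(s\in[T-\delta,T]\) with \(\delta\le T/5\) this gives \(d\asymp\tscale{T}\). Hence
\[
 r=\frac{A}{\abs{a}\bigl(\tscale{s}^{-1}+\abs{w}^2\bigr)c_ds}
 \le\frac{CA}{\abs{a}\bigl(\tscale{T}^{-1}+\abs{w}^2\bigr)\tscale{T}}.
\]
We work at fixed \(s\) and apply the frozen radial estimate of \cref{lem:freezing}, with \(\cB_1\subset\cG^c\) as the set \(D\). The choice of coordinates follows which factor we want to integrate.

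In \((z,w)\) coordinates we use \eqref{eq:freeze-a-comparison}, which replaces \(\abs{a}\) by \(\abs{\ta_s(w)}\) up to constants. This gives \(\abs{z}\le R_s(w)\) with
\[
 R_s(w):=\frac{CA}{\abs{\ta_s(w)}\bigl(\tscale{T}^{-1}+\abs{w}^2\bigr)\tscale{T}}.
\]
By \eqref{eq:frozen-radial-w},
\[
 \iint_{\cB_1(s)}\frac{\abs{w}f}{\abs{z}^2}
 \le \frac{CA}{\tscale{T}}\int_{P<\abs{w}\le C_BQ}
 \frac{\abs{w}}{\abs{w-w_*(s)}\bigl(\tscale{T}^{-1}+\abs{w}^2\bigr)}\dd w .
\]
The integrand is bounded by \(\abs{w}^{-1}\abs{w-w_*(s)}^{-1}\). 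The restriction \(\abs{w}\le C_BQ\) comes from \cref{lem:B-separation}. The remaining task is to show
\[
 \int_{\abs{w}\le C_BQ}\frac{\dd w}{\abs{w}\,\abs{w-w_*(s)}}\le CQ .
\]
This follows from scaling: substitute \(w=Qy\). We have \(\abs{w_*(s)}\asymp Q\) by \eqref{eq:q-record-comparison}, and the kernel \(\abs{y}^{-1}\abs{y-e}^{-1}\) is locally integrable in \(\R^3\), since each singularity is only of order one. The integral over the ball of radius \(C_B\) is therefore a bounded constant, uniformly in the unit-order vector \(e=w_*(s)/Q\). This yields a bound \(CAQ/\tscale{T}\) for each \(s\). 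Integrating over \(s\in[T-\delta,T]\) gives \eqref{eq:B1}.

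The step needing care is avoiding any loss from the low cutoffs \(\abs{w}>P\) and \(\abs{a}>P\). Dropping \(\tscale{T}^{-1}\) against \(\abs{w}^2\) leaves \(\abs{w}^{-1}\), which is integrable near \(0\). Likewise \(\abs{w-w_*}^{-1}\) is integrable. So no logarithm of \(P\) appears, and the retained factor \(\abs{w}\) is exactly what converts \(\abs{w}^{-2}\) into the integrable \(\abs{w}^{-1}\). The only other check is that \(\ta_s\) is comparable to \(a\) on \(\cG^c\), which \eqref{eq:freeze-a-comparison} supplies. If instead one wished to work in \((z,a)\) coordinates, the analogous computation with \(\tw_s(a)=w_*(s)+a\) and \eqref{eq:frozen-radial-a} gives the same bound.
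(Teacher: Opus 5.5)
Your proof is correct, but it organizes the velocity integral differently from the paper.

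The paper splits \(\cB_1\) into \(\{\abs{w}\le\abs{a}\}\) and \(\{\abs{a}<\abs{w}\}\). It uses \((z,w)\) coordinates on the first piece and \((z,a)\) coordinates on the second. On the first piece, \(\abs{a}\ge\abs{w}\) lets it replace \(\abs{a}\) by \(\abs{w}\) in the radius. On the second, \(\abs{\tw_s(a)}\ge c\abs{a}\) lets it dominate the weighted integrand by \(C\abs{a}^{-2}\). Each piece thus reduces to a one-center radial integral, \(\int\dd w/(\tscale{s}^{-1}+\abs{w}^2)\) or \(\int\dd a/\abs{a}^2\), both bounded by \(CQ\).

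You instead stay in \((z,w)\) coordinates throughout. You freeze \(a\) as \(\ta_s(w)=w-w_*(s)\) via \eqref{eq:freeze-a-comparison}, and then bound the two-center kernel \(\int_{\abs{w}\le C_BQ}\abs{w}^{-1}\abs{w-w_*(s)}^{-1}\dd w\le CQ\) by scaling. The paper's dichotomy is exactly the elementary proof of that kernel bound: on \(\abs{y}\le\abs{y-e}\) dominate by \(\abs{y}^{-2}\), otherwise by \(\abs{y-e}^{-2}\). That proof also shows the bound is uniform in \(e\), so your appeal to \(\abs{w_*(s)}\asymp Q\) is harmless but unnecessary. Your version is shorter and needs only one freezing. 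The paper's version keeps \(\cB_1\) in the same one-center format used for \(\cB_2\)--\(\cB_5\).

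One correction to your closing commentary. \(\abs{w}^{-2}\) is locally integrable in \(\R^3\), so in your two-center formulation the retained weight is not what prevents a logarithm. Replacing \(\abs{w}\) by \(\sqrt2Q\) still gives \(CQ\), because \(\abs{w_*(s)}\ge c_qQ\) keeps the two singularities apart. It is in the paper's one-center split that retaining \(\abs{w}\) matters: bounding it by \(CQ\) there would produce a logarithmic factor of order \(\Tlog\).
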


\begin{proof}
On \(\cB_1\),
\begin{equation}
 r\le\frac{CA}{s\abs{a}(\tscale{s}^{-1}+\abs{w}^2)}. \label{eq:B1-radius}
\end{equation}
Split \(\cB_1\) into the regions \(\abs{w}\le\abs{a}\) and
\(\abs{a}<\abs{w}\).

On the first region, use \((z,w)\) coordinates. Since
\(\abs{a}\ge\abs{w}\), \eqref{eq:B1-radius} implies
\[
 r\le\frac{CA}{s\abs{w}(\tscale{s}^{-1}+\abs{w}^2)}.
\]
Therefore, by \eqref{eq:frozen-radial-w},
\begin{align*}
 \iint_{\cB_1\cap\{\abs{w}\le\abs{a}\}}
 \frac{\abs{w} f}{\abs{z}^2}\dd x\dd v
 &\le\frac{CA}{s}
 \int_{P<\abs{w}\le C_BQ}
 \frac{\dd w}{\tscale{s}^{-1}+\abs{w}^2}\\
 &\le\frac{CAQ}{s}.
\end{align*}
Indeed, in radial variables the last integral is
\(C\int_P^{C_BQ}r^2(\tscale{s}^{-1}+r^2)^{-1}\dd r\le CQ\).

On the second region, use \((z,a)\) coordinates. By
\cref{lem:freezing}, \(\abs{\tw_s(a)}\asymp\abs{w}\), and the inequality
\(\abs{a}<\abs{w}\) gives
\(\abs{\tw_s(a)}\ge c\abs{a}\). Hence
\[
 r\le\frac{CA}{s\abs{a}(\tscale{s}^{-1}+\abs{\tw_s(a)}^2)}.
\]
By \eqref{eq:frozen-radial-a},
\begin{align*}
 \iint_{\cB_1\cap\{\abs{a}<\abs{w}\}}
 \frac{\abs{w} f}{\abs{z}^2}\dd x\dd v
 &\le\frac{CA}{s}
 \int_{P<\abs{a}\le C_BQ}
 \frac{\abs{\tw_s(a)}}
 {\abs{a}(\tscale{s}^{-1}+\abs{\tw_s(a)}^2)}\dd a\\
 &\le\frac{CA}{s}
 \int_{P<\abs{a}\le C_BQ}\frac{\dd a}{\abs{a}^2}
 \le\frac{CAQ}{s}.
\end{align*}
Since \(s\asymp\tscale{T}\) on the record interval, integration in time proves \eqref{eq:B1}.
\end{proof}

\begin{lemma}[Contribution of \(\cB_2\)]\label{lem:B2}
\begin{equation}
 \iiint_{\cB_2}\frac{\abs{w} f}{\abs{z}^2}
 \le CA\delta\frac{\Tlog^2}{Q\tscale{T}}. \label{eq:B2}
\end{equation}
\end{lemma}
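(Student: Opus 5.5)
The plan is to follow the proof of \cref{lem:B1}: make the radius $r$ explicit on $\cB_2$, split according to which of $\abs{a}$, $\abs{w}$ is larger, and integrate radially with \cref{lem:freezing}. The separation bound \cref{lem:B-separation} will force the larger of the two velocities to be of size $Q$. On $\cB_2$ we have $d=d_2=c_d\tscale{s}\abs{a}^2/\tlog{s}$, so
\[
 r=\frac{A\,\tlog{s}}{c_d\tscale{s}\abs{a}^3\bigl(\tscale{s}^{-1}+\abs{w}^2\bigr)}.
\]
By \eqref{eq:record-comparable-time} we may replace $\tlog{s}$ by $\Tlog$ and $\tscale{s}$ by $\tscale{T}$ throughout.

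\emph{First region: $\abs{w}\le\abs{a}$.} Here \eqref{eq:B-separation} gives $\abs{a}\ge c_BQ/2$. Hence
\[
 r\le\frac{CA\Tlog}{\tscale{T}Q^3(\tscale{s}^{-1}+\abs{w}^2)}.
\]
We work in $(z,w)$ coordinates and apply \eqref{eq:frozen-radial-w}. This reduces matters to
\[
 \int_{P<\abs{w}\le C_BQ}\frac{\abs{w}}{\tscale{s}^{-1}+\abs{w}^2}\dd w\le C\int_0^{C_BQ}\rho\dd\rho\le CQ^2 .
\]
The contribution at fixed $s$ is therefore at most $CA\Tlog/(Q\tscale{T})$, which is better than needed.

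\emph{Second region: $\abs{a}<\abs{w}$.} Now $\abs{w}\ge c_BQ/2$, and $\abs{w}\le\sqrt2Q$ by \eqref{eq:w-by-H}. By \eqref{eq:freeze-w-comparison}, $\abs{\tw_s(a)}\asymp\abs{w}\asymp Q$ on this piece. It follows that
\[
 r\le\frac{CA\Tlog}{\tscale{T}\abs{a}^3Q^2}.
\]
In $(z,a)$ coordinates, \eqref{eq:frozen-radial-a} bounds the contribution at fixed $s$ by
\[
 \frac{CA\Tlog}{\tscale{T}Q}\int_{P<\abs{a}\le C_BQ}\frac{\dd a}{\abs{a}^3}
 =\frac{CA\Tlog}{\tscale{T}Q}\,\log\frac{C_BQ}{P}.
\]

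\emph{The second logarithm.} The main point is to show that $\log(C_BQ/P)\le C\Tlog$. This follows from the a priori bound \eqref{eq:H-baseline}, which gives $Q\le C\tscale{T}^{2/15}\Tlog^{8/15}$, together with $P=2\tscale{T}^{-1/2}$. So $Q/P$ is at most a fixed power of $\tscale{T}$, and its logarithm is controlled by $\Tlog$. This is precisely the source of the squared logarithm in \eqref{eq:B2}.

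\emph{Conclusion.} Both regions give at most $CA\Tlog^2/(Q\tscale{T})$ per unit time. Integrating over $s\in[T-\delta,T]$ then yields \eqref{eq:B2}.
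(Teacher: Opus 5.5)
Your proof is correct and follows the paper's argument. The paper also works in $(z,a)$ coordinates, uses separation to get $\abs{w}\ge cQ$, and turns the $\int\abs{a}^{-3}\dd a$ radial integral into $\log(C_BQ/P)\le C\Tlog$ via \eqref{eq:H-baseline}. The only difference is that your first region is unnecessary: on $\cB_2$ the minimality $d_2\le d_4$ already forces $\abs{a}\le\abs{w}$, so the paper needs no case split, though your estimate for that region is valid.
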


\begin{proof}
Since \(d_2\le d_4\) on \(\cB_2\),
\(\abs{a}\le\abs{w}\). Together with
\eqref{eq:B-separation}, this gives \(\abs{w}\ge cQ\). In
\((z,a)\) coordinates, \cref{lem:freezing} yields
\[
 r\le\frac{CA\tlog{s}}
 {\tscale{s}\abs{a}^3(\tscale{s}^{-1}+\abs{\tw_s(a)}^2)},
 \qquad
 \abs{\tw_s(a)}\ge cQ.
\]
Therefore,
\begin{align*}
 \iint_{\cB_2}\frac{\abs{w} f}{\abs{z}^2}\dd x\dd v
 &\le\frac{CA\tlog{s}}{\tscale{s}}
 \int_{P<\abs{a}\le C_BQ}
 \frac{\abs{\tw_s(a)}}
 {\abs{a}^3(\tscale{s}^{-1}+\abs{\tw_s(a)}^2)}\dd a\\
 &\le\frac{CA\tlog{s}}{\tscale{s}Q}
 \int_{P<\abs{a}\le C_BQ}\frac{\dd a}{\abs{a}^3}\\
 &\le\frac{CA\tlog{s}}{\tscale{s}Q}\log\frac{C_BQ}{P}.
\end{align*}
By \eqref{eq:H-baseline} and \eqref{eq:P-choice},
\[
 \log\frac{C_BQ}{P}\le C\Tlog.
\]
Time integration proves \eqref{eq:B2}.
\end{proof}

\begin{lemma}[Contribution of \(\cB_3\)]\label{lem:B3}
\begin{equation}
 \iiint_{\cB_3}\frac{\abs{w} f}{\abs{z}^2}
 \le CA\delta\frac{\Tlog^{4/7}}{\tscale{T}^{5/7}}. \label{eq:B3}
\end{equation}
\end{lemma}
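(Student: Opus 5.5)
On \(\cB_3\) the minimum defining \(d\) is attained at \(d_3\), so
\[
 r\le\frac{CA\tlog{s}^{4/7}}{\tscale{s}^{5/7}\abs{a}^2\bigl(\tscale{s}^{-1}+\abs{w}^2\bigr)}.
\]
The plan is to follow the proof of \cref{lem:B1}. We split \(\cB_3\) into \(\{\abs{w}\le\abs{a}\}\) and \(\{\abs{a}<\abs{w}\}\), and on each piece we choose the coordinate system in which the radial bound depends only on the integration variable. The factor \(\abs{w}\) is kept inside the radial integral.

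On the region \(\abs{a}<\abs{w}\) we use \((z,a)\) coordinates. By \cref{lem:freezing}, \(\abs{\tw_s(a)}\asymp\abs{w}\), and hence \(\abs{\tw_s(a)}\ge c\abs{a}\). The radius is then bounded by
\[
 R_s(a)=\frac{CA\tlog{s}^{4/7}}{\tscale{s}^{5/7}\abs{a}^2(\tscale{s}^{-1}+\abs{\tw_s(a)}^2)}.
\]
Applying \eqref{eq:frozen-radial-a}, and using \(\abs{\tw_s(a)}/(\tscale{s}^{-1}+\abs{\tw_s(a)}^2)\le 1/\abs{\tw_s(a)}\le C/\abs{a}\), the integrand becomes \(CA\tlog{s}^{4/7}\tscale{s}^{-5/7}\abs{a}^{-3}\). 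Integrating over \(P<\abs{a}\le C_BQ\) produces \(\log(C_BQ/P)\), which is of size \(\Tlog\) and so loses a logarithm.

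To avoid this loss we instead pass to \((z,w)\) coordinates on this region. Since \(\abs{a}\ge P\), we have \(\abs{a}^{-2}\le\) something integrable in \(w\)? This does not work directly either. The better observation is that on \(\cB_3\) we also have \(d_3\le d_2\). This gives \(\tscale{s}^{5/7}\abs{a}\tlog{s}^{-4/7}\le\tscale{s}\abs{a}^2\tlog{s}^{-1}\), that is,
\[
 \abs{a}\ge a_0:=\tscale{s}^{-2/7}\tlog{s}^{3/7}.
\]
With this lower cutoff the \(a\)-integral \(\int_{\abs{a}>a_0}\abs{a}^{-3}\dd a\) still diverges logarithmically at large \(\abs{a}\) up to \(C_BQ\). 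So a different split is needed.

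We therefore separate the two regions according to the size of \(\abs{w}\). On \(\{\abs{a}<\abs{w}\}\), \cref{lem:B-separation} gives \(\abs{w}\ge cQ\). Then \(\abs{\tw_s(a)}/(\tscale{s}^{-1}+\abs{\tw_s(a)}^2)\le C/Q\), and the \(a\)-integral becomes \(Q^{-1}\int_{\abs{a}\le C_BQ}\abs{a}^{-2}\dd a\le C\), with no logarithm. This bounds the piece by \(CA\tscale{s}^{-5/7}\tlog{s}^{4/7}\).

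On the region \(\abs{w}\le\abs{a}\) we use \((z,w)\) coordinates with \(\abs{\ta_s(w)}\asymp\abs{a}\). Replacing \(\abs{a}^{-2}\) by \(\abs{w}^{-2}\) there, \eqref{eq:frozen-radial-w} reduces the estimate to
\[
 \int_{P<\abs{w}\le C_BQ}\frac{\abs{w}}{\abs{w}^2(\tscale{s}^{-1}+\abs{w}^2)}\dd w
 \le C\int_P^\infty\frac{r}{\tscale{s}^{-1}+r^2}\dd r.
\]
This integral diverges logarithmically, so this crude replacement must be refined. We keep one factor \(\abs{a}^{-1}\le\abs{w}^{-1}\) and, from \cref{lem:B-separation}, use \(\abs{a}\ge cQ\) for the other factor, since here \(\abs{a}\ge\frac12(\abs{a}+\abs{w})\). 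The integrand becomes \(CQ^{-1}\abs{w}^{-1}\abs{w}(\tscale{s}^{-1}+\abs{w}^2)^{-1}\). Its integral over \(\abs{w}\le C_BQ\) is at most \(CQ^{-1}\cdot Q=C\), again giving \(CA\tscale{s}^{-5/7}\tlog{s}^{4/7}\).

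The key point, and the main obstacle, is this use of the separation \(\abs{a}+\abs{w}\ge c_BQ\) to trade one inverse power of the larger velocity for \(Q^{-1}\), which removes the logarithm in both pieces. Integrating in \(s\) over \([T-\delta,T]\) and using \eqref{eq:record-comparable-time} then gives \eqref{eq:B3}.
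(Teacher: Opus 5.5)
Your final argument is correct and uses the paper's mechanism. On \(\{\abs{a}<\abs{w}\}\) you use \cref{lem:B-separation} to get \(\abs{\tw_s(a)}\ge cQ\), trade \(\abs{\tw_s(a)}/(\tscale{s}^{-1}+\abs{\tw_s(a)}^2)\) for \(CQ^{-1}\), and integrate \(\abs{a}^{-2}\) over \(\abs{a}\le C_BQ\) via \eqref{eq:frozen-radial-a}; the paper's extra split at \(\abs{a}=c_1Q\) is in fact not needed.

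The only real difference is that the paper notes at the outset that \(d_3\le d_5\) on \(\cB_3\) forces \(\abs{a}\le\abs{w}\). So your first argument, which works verbatim with \(\le\), already covers all of \(\cB_3\). Your \((z,w)\)-coordinate piece is correctly handled by the \(\cB_5\)-type computation but is superfluous. The abandoned attempts (e.g.\ the \(d_3\le d_2\) cutoff) should be deleted from a written proof.
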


\begin{proof}
On \(\cB_3\), \(d_3\le d_5\), so
\(\abs{a}\le\abs{w}\). Hence \(\abs{w}\ge cQ\) by
\eqref{eq:B-separation}. In \((z,a)\) coordinates,
\[
 r\le\frac{CA\tlog{s}^{4/7}}
 {\tscale{s}^{5/7}\abs{a}^2
 (\tscale{s}^{-1}+\abs{\tw_s(a)}^2)}.
\]
Choose a fixed constant \(c_1\in(0,c_B/4)\) and split at
\(\abs{a}=c_1Q\). After increasing \(Q_*\), we may assume
\(P<c_1Q\).

If \(\abs{a}\ge c_1Q\), then
\(\abs{\tw_s(a)}\asymp\abs{w}\) and
\(\abs{\tw_s(a)}\ge c\abs{a}\). Therefore,
\begin{align*}
 \iint_{\cB_3\cap\{\abs{a}\ge c_1Q\}}
 \frac{\abs{w} f}{\abs{z}^2}\dd x\dd v
 &\le\frac{CA\tlog{s}^{4/7}}{\tscale{s}^{5/7}}
 \int_{c_1Q\le\abs{a}\le C_BQ}
 \frac{\dd a}{\abs{a}^3}\\
 &\le\frac{CA\tlog{s}^{4/7}}{\tscale{s}^{5/7}}.
\end{align*}
If \(\abs{a}<c_1Q\), separation implies
\(\abs{\tw_s(a)}\asymp\abs{w}\ge cQ\). Hence
\begin{align*}
 \iint_{\cB_3\cap\{\abs{a}<c_1Q\}}
 \frac{\abs{w} f}{\abs{z}^2}\dd x\dd v
 &\le\frac{CA\tlog{s}^{4/7}}{\tscale{s}^{5/7}Q}
 \int_{P<\abs{a}<c_1Q}\frac{\dd a}{\abs{a}^2}\\
 &\le\frac{CA\tlog{s}^{4/7}}{\tscale{s}^{5/7}}.
\end{align*}
Integrating in time proves \eqref{eq:B3}.
\end{proof}

\begin{lemma}[Contribution of the quadratic \(w\)-branch]\label{lem:B4}
\begin{equation}
 \iiint_{\cB_4}\frac{\abs{w} f}{\abs{z}^2}
 \le CA\delta\frac{\Tlog^2}{Q\tscale{T}}. \label{eq:B4}
\end{equation}
\end{lemma}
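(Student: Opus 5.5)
On \(\cB_4\) we have \(d=d_4=c_d\tscale{s}\abs{w}^2/\tlog{s}\), so
\[
 r\le\frac{CA\tlog{s}}{\tscale{s}\abs{a}\abs{w}^2(\tscale{s}^{-1}+\abs{w}^2)}.
\]
Since \(d_4\le d_2\) on \(\cB_4\), we get \(\abs{w}\le\abs{a}\). Then \eqref{eq:B-separation} gives \(\abs{a}\ge c_BQ/2\), because \(2\abs{a}\ge\abs{a}+\abs{w}\ge c_BQ\). This mirrors \cref{lem:B2}, with the roles of \(a\) and \(w\) interchanged. The natural coordinates are now \((z,w)\): the weight \(\abs{w}\) is then exactly the integration variable, and no freezing loss occurs in it. The complementary velocity is \(a=\ta_s(w)+z/\tscale{s}\), and \eqref{eq:freeze-a-comparison} gives \(\abs{\ta_s(w)}\asymp\abs{a}\ge cQ\) on \(\cG^c\). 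Therefore, in \((z,w)\) coordinates,
\[
 r\le R_s(w):=\frac{CA\tlog{s}}{\tscale{s}\,Q\,\abs{w}^2(\tscale{s}^{-1}+\abs{w}^2)},
\]
and this bound depends only on \(w\), as required in \eqref{eq:frozen-radial-w}.

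Applying \eqref{eq:frozen-radial-w} on \(\cB_4\subset\cG^c\), where \(P<\abs{w}\le C_BQ\), yields
\[
 \iint_{\cB_4}\frac{\abs{w}f}{\abs{z}^2}\dd x\dd v
 \le\frac{CA\tlog{s}}{\tscale{s}Q}\int_{P<\abs{w}\le C_BQ}\frac{\abs{w}\dd w}{\abs{w}^2(\tscale{s}^{-1}+\abs{w}^2)}.
\]
Retaining \(\abs{w}\) is decisive here. In radial variables the integrand becomes \(\rho^{3}\rho^{-2}(\tscale{s}^{-1}+\rho^2)^{-1}\dd\rho\le\rho^{-1}\dd\rho\). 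This gives \(\log(C_BQ/P)\) rather than an inverse-cutoff term like \(P^{-1}\) or \(\tscale{s}^{1/2}\), which dropping \(\abs{w}\) would produce. As in \cref{lem:B2}, \eqref{eq:H-baseline} and \eqref{eq:P-choice} give \(\log(C_BQ/P)\le C\Tlog\).

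Combining these bounds, the spatial-velocity integral at time \(s\) is at most \(CA\tlog{s}^2/(Q\tscale{s})\). By \eqref{eq:record-comparable-time}, \(\tscale{s}\asymp\tscale{T}\) and \(\tlog{s}\asymp\Tlog\). Integrating over \(s\in[T-\delta,T]\) then gives \eqref{eq:B4}.

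The main point to check is that \(\abs{w}\le\abs{a}\) and the lower bound \(\abs{a}\gtrsim Q\) transfer to the frozen variable \(\ta_s(w)\) uniformly in \(z\), so that \(R_s\) is a function of \(w\) alone. This follows from \eqref{eq:freeze-a-comparison}. The only other care needed is that the tie-breaking rule defining \(\cB_4\) gives the non-strict inequality \(d_4\le d_2\), which is all the argument uses.
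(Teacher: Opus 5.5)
Your proof is correct and follows the paper's argument essentially line by line: \(d_4\le d_2\) gives \(\abs{w}\le\abs{a}\), separation then gives \(\abs{a}\ge c_BQ/2\), and \eqref{eq:frozen-radial-w} with the weight \(\abs{w}\) retained reduces everything to a logarithmic radial integral bounded by \(C\Tlog\). The only differences are cosmetic: you bound \(\rho/(\tscale{s}^{-1}+\rho^2)\le\rho^{-1}\) where the paper integrates exactly, and your appeal to \(\ta_s(w)\) is unnecessary, since \(\abs{a}\ge c_BQ/2\) holds pointwise on \(\cB_4\) and already makes the radius bound a function of \(w\) alone.
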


\begin{proof}
The inequality \(d_4\le d_2\) gives
\(\abs{w}\le\abs{a}\). By separation, \(\abs{a}\ge cQ\). Thus, in
\((z,w)\) coordinates,
\[
 r\le\frac{CA\tlog{s}}
 {\tscale{s}Q\abs{w}^2(\tscale{s}^{-1}+\abs{w}^2)}.
\]
Using \eqref{eq:frozen-radial-w},
\begin{align*}
 \iint_{\cB_4}\frac{\abs{w} f}{\abs{z}^2}\dd x\dd v
 &\le\frac{CA\tlog{s}}{\tscale{s}Q}
 \int_{P<\abs{w}\le C_BQ}
 \frac{\dd w}{\abs{w}(\tscale{s}^{-1}+\abs{w}^2)}\\
 &\le\frac{CA\tlog{s}}{\tscale{s}Q}
 \int_P^{C_BQ}\frac{r}{\tscale{s}^{-1}+r^2}\dd r\\
 &\le\frac{CA\tlog{s}}{\tscale{s}Q}
 \log\left(
 \frac{\tscale{s}^{-1}+C_B^2Q^2}
 {\tscale{s}^{-1}+P^2}\right)
 \le\frac{CA\tlog{s}^2}{\tscale{s}Q}.
\end{align*}
The last step follows from \eqref{eq:H-baseline} and \eqref{eq:P-choice}. Time integration proves \eqref{eq:B4}.
\end{proof}

\begin{lemma}[Contribution of the linear \(w\)-branch]\label{lem:B5}
\begin{equation}
 \iiint_{\cB_5}\frac{\abs{w} f}{\abs{z}^2}
 \le CA\delta\frac{\Tlog^{4/7}}{\tscale{T}^{5/7}}. \label{eq:B5}
\end{equation}
\end{lemma}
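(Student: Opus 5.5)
On \(\cB_5\), \(d=d_5\) is minimal. I will follow the pattern of \cref{lem:B4}: work in \((z,w)\) coordinates and keep the factor \(\abs{w}\) inside the radial integration.

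\emph{Step 1: reduce to large \(\abs{a}\).} Since \(d_5\le d_3\), we have \(\abs{w}\le\abs{a}\). By \eqref{eq:B-separation}, \(\abs{a}+\abs{w}\ge c_BQ\), so \(\abs{a}\ge c_BQ/2\ge cQ\).

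\emph{Step 2: bound the radius.} With \(d=d_5\), the definition of \(r\) gives
\[
 r\le\frac{CA\tlog{s}^{4/7}}{\tscale{s}^{5/7}\abs{a}\abs{w}\bigl(\tscale{s}^{-1}+\abs{w}^2\bigr)}
 \le\frac{CA\tlog{s}^{4/7}}{\tscale{s}^{5/7}Q\abs{w}\bigl(\tscale{s}^{-1}+\abs{w}^2\bigr)}.
\]
The second bound uses \(\abs{a}\ge cQ\), and it depends only on \(w\) and \(s\). So I can apply \eqref{eq:frozen-radial-w} directly, without the freezing of \(\ta_s\).

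\emph{Step 3: radial integration.} Applying \eqref{eq:frozen-radial-w}, the factor \(\abs{w}\) cancels the \(\abs{w}^{-1}\) in the radius. Restricting to \(P<\abs{w}\le C_BQ\) by \eqref{eq:B-separation}, the fixed-\(s\) contribution is at most
\[
 \frac{CA\tlog{s}^{4/7}}{\tscale{s}^{5/7}Q}\int_{P<\abs{w}\le C_BQ}\frac{\dd w}{\tscale{s}^{-1}+\abs{w}^2}
 \le\frac{CA\tlog{s}^{4/7}}{\tscale{s}^{5/7}Q}\int_P^{C_BQ}\frac{r^2}{\tscale{s}^{-1}+r^2}\dd r
 \le\frac{CA\tlog{s}^{4/7}}{\tscale{s}^{5/7}}.
\]
The radial integrand is at most \(1\) and the range has length at most \(C_BQ\). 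The factor \(Q\) produced here cancels the \(Q^{-1}\) from Step 2, and no logarithm appears. This is the "linear \(w\)-branch without logarithmic loss" mechanism described in the introduction.

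\emph{Step 4: time integration.} By \eqref{eq:record-comparable-time}, \(\tscale{s}\asymp\tscale{T}\) and \(\tlog{s}\asymp\Tlog\). Integrating over \([T-\delta,T]\) then gives \eqref{eq:B5}.

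The main point to check is Step 2. There I must make sure that replacing \(\abs{a}\) by \(Q\) in the denominator is legitimate, namely that the separation really forces \(\abs{a}\ge cQ\) on \(\cB_5\). I also must not use a tie-breaking rule in the wrong direction: \(\cB_5\) is assigned when \(d_5\) is the minimum, so the needed inequality \(d_5\le d_3\) holds regardless of the smallest-index convention. The remaining steps are routine and mirror \cref{lem:B4}.
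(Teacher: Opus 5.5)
Your proposal is correct and follows the paper's proof essentially step for step. In both, \(d_5\le d_3\) gives \(\abs{w}\le\abs{a}\) and separation then gives \(\abs{a}\ge cQ\); in \((z,w)\) coordinates the retained factor \(\abs{w}\) cancels the \(\abs{w}^{-1}\) in the radius, so the radial integral of size \(Q\) cancels the prefactor \(Q^{-1}\) and no logarithm appears.
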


\begin{proof}
Here \(d_5\le d_3\), so \(\abs{w}\le\abs{a}\), and
\eqref{eq:B-separation} gives \(\abs{a}\ge cQ\). In \((z,w)\) coordinates,
\[
 r\le\frac{CA\tlog{s}^{4/7}}
 {\tscale{s}^{5/7}Q\abs{w}(\tscale{s}^{-1}+\abs{w}^2)}.
\]
Therefore,
\begin{align*}
 \iint_{\cB_5}\frac{\abs{w} f}{\abs{z}^2}\dd x\dd v
 &\le\frac{CA\tlog{s}^{4/7}}{\tscale{s}^{5/7}Q}
 \int_{P<\abs{w}\le C_BQ}
 \frac{\dd w}{\tscale{s}^{-1}+\abs{w}^2}\\
 &\le\frac{CA\tlog{s}^{4/7}}{\tscale{s}^{5/7}Q}
 \int_0^{C_BQ}\frac{r^2}{\tscale{s}^{-1}+r^2}\dd r\\
 &\le\frac{CA\tlog{s}^{4/7}}{\tscale{s}^{5/7}}.
\end{align*}
The factor \(\abs{w}\) in the current is essential here: it cancels the factor
\(\abs{w}^{-1}\) in the branch radius before the radial integration. Integrating in time proves \eqref{eq:B5}.
\end{proof}

Combining \cref{lem:B1,lem:B2,lem:B3,lem:B4,lem:B5},
\begin{equation}
 \iiint_{\cB}\frac{\abs{w} f}{\abs{z}^2}
 \le C\delta\left(
 A\frac{Q}{\tscale{T}}
 +A\frac{\Tlog^2}{Q\tscale{T}}
 +A\frac{\Tlog^{4/7}}{\tscale{T}^{5/7}}
 \right). \label{eq:B-total}
\end{equation}

\subsection{The crossing region}

Let \((X(s),V(s))\) be a source characteristic issued from the support, so
that \((X(s),V(s))\in\supp f(s)\). Put
\begin{equation*}
 Z(s)=X(s)-X_*(s),
 \qquad
 a(s)=V(s)-V_*(s),
 \qquad
 W(s)=V(s)-\frac{X(s)}{\tscale{s}}.
\end{equation*}
At a terminal time \(t\), write
\[
 d_t=d(t,X(t),V(t)),
 \qquad
 r_t=r(t,X(t),V(t)),
 \qquad
 a_t=a(t),
 \qquad
 W_t=W(t).
\]

The next lemma records the local comparison statements used in the crossing
argument.

\begin{lemma}[Stability on the source-dependent interval]\label{lem:source-stability}
Assume \(d_t>0\). For every \(s\in[t-d_t,t]\),
\begin{equation}
 \frac12\abs{a_t}\le\abs{a(s)}\le\frac32\abs{a_t}, \label{eq:a-stability}
\end{equation}
and
\begin{equation}
 \frac12\abs{W_t}\le\abs{W(s)}\le\frac32\abs{W_t}. \label{eq:W-stability}
\end{equation}
Consequently,
\begin{equation}
 d(s,X(s),V(s))\asymp d_t,
 \qquad
 r(s,X(s),V(s))\asymp r_t. \label{eq:d-r-stability}
\end{equation}
All comparison constants are independent of the source characteristic and of \(t\).
\end{lemma}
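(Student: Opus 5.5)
We use the impulse bounds built into the definition of \(d\) through \cref{lem:rescaling}. Write the source characteristic as \((X(s),V(s))=(X(s;t,x,v),V(s;t,x,v))\) with \((x,v)=(X(t),V(t))\in\supp f(t)\). By \eqref{eq:d-Delta}, \(d_t\le\cD(t,\eps\abs{a_t})\) and \(d_t\le\cD(t,\eps\abs{W_t})\). By \cref{lem:Delta-closed}, for \(s\in[t-d_t,t]\),
\[
 \int_s^t\abs{E(u,X(u))}\dd u\le\eps\min\{\abs{a_t},\abs{W_t}\}.
\]
The target characteristic is not an arbitrary point of \(\supp f(t)\), but it does lie in the support, since the target is a record characteristic issued from the support. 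Hence the same maximum \(F_t(d_t)\) also controls its impulse:
\[
 \int_s^t\abs{E(u,X_*(u))}\dd u\le\eps\min\{\abs{a_t},\abs{W_t}\}.
\]

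\emph{Relative velocity.} Since \(a'(s)=E(s,X(s))-E(s,X_*(s))\),
\[
 \abs{a(s)-a_t}\le 2\eps\abs{a_t}\le\tfrac1{10}\abs{a_t}.
\]
This gives \eqref{eq:a-stability}.

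\emph{Self-similar velocity.} As in the proof of \cref{lem:record-comparison}, \((\tscale{s}W(s))'=\tscale{s}E(s,X(s))\). Hence
\[
 W(s)=\frac{1+t}{1+s}W_t-\frac1{1+s}\int_s^t\tscale{u}E(u,X(u))\dd u .
\]
Since \(d_t\le t/5\), we have \(t-d_t\ge 4t/5\), so \((1+t)/(1+s)\in[1,5/4]\), and the integral term is at most \(\tfrac54\eps\abs{W_t}\le\tfrac1{16}\abs{W_t}\). Thus
\[
 \tfrac{15}{16}\abs{W_t}\le\abs{W(s)}\le\bigl(\tfrac54+\tfrac1{16}\bigr)\abs{W_t},
\]
and this is within \([\tfrac12,\tfrac32]\), giving \eqref{eq:W-stability}.

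\emph{Stability of \(d\) and \(r\).} We have \(s\asymp t\), \(\tscale{s}\asymp\tscale{t}\) and \(\tlog{s}\asymp\tlog{t}\), with absolute constants. Together with \(\abs{a(s)}\asymp\abs{a_t}\) and \(\abs{W(s)}\asymp\abs{W_t}\), each branch \(d_j\) evaluated along the source characteristic at time \(s\) is comparable to its value at \(t\). Taking the minimum over \(j\) gives \(d(s,X(s),V(s))\asymp d_t\). For \(r\) we also need
\[
 \tscale{s}^{-1}+\abs{W(s)}^2\asymp\tscale{t}^{-1}+\abs{W_t}^2,
\]
which holds by the same comparisons. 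Since \(d>0\) along the interval, by the comparability of \(d\) just shown, the formula for \(r\) applies throughout and gives \(r\asymp r_t\). All constants depend only on \(\eps\) and numerical factors.

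The main obstacle is justifying that the impulse bound controls the target characteristic as well as the source. If \(\supp f(t)\) membership of the target is in doubt, the fallback is to note that \(\cD\) is defined via a maximum over \(\supp f(t)\), and the target point \((X_*(t),V_*(t))\) belongs to \(\supp f(t)\) by invariance of the support under the characteristic flow.
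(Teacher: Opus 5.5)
Your proposal is correct and follows the paper's proof essentially step for step. The shared steps are: impulse bounds for both source and target via \cref{lem:rescaling} and \cref{lem:Delta-closed}, with the target lying in $\supp f(t)$ by invariance of the support; the backward formula for $(1+s)W(s)$ with $(1+t)/(1+s)\le 5/4$; and branchwise comparability giving $d$ and $r$. The only cosmetic difference is that you bound $\abs{W(s)}$ directly between $\tfrac{15}{16}\abs{W_t}$ and $\tfrac{21}{16}\abs{W_t}$, whereas the paper estimates $\abs{W(s)-W_t}\le(\tfrac14+\tfrac54\eps)\abs{W_t}$; both are equally valid.
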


\begin{proof}
By \cref{lem:rescaling},
\[
 d_t\le\cD(t,\eps\abs{a_t}),
 \qquad
 d_t\le\cD(t,\eps\abs{W_t}),
 \qquad
 d_t\le t/5.
\]
Both the source and target curves end in \(\supp f(t)\). Therefore, by
\cref{lem:Delta-closed},
\begin{align*}
 \abs{a(s)-a_t}
 &\le\int_s^t\abs{E(u,X(u))}\dd u
 +\int_s^t\abs{E(u,X_*(u))}\dd u\\
 &\le2\eps\abs{a_t}.
\end{align*}
Since \(2\eps<1/10\), this proves \eqref{eq:a-stability}.

Next,
\[
 (\tscale{s}W(s))'=\tscale{s}E(s,X(s)),
\]
so
\begin{equation}
 W(s)=\frac{\tscale{t}}{\tscale{s}}W_t
 -\frac1{\tscale{s}}\int_s^t\tscale{u}E(u,X(u))\dd u. \label{eq:W-backward}
\end{equation}
Because \(t-s\le d_t\le t/5\),
\[
 1\le\frac{\tscale{t}}{\tscale{s}}\le\frac54
\]
for every \(t>0\). Furthermore,
\[
 \frac1{\tscale{s}}\int_s^t\tscale{u}\abs{E(u,X(u))}\dd u
 \le\frac{\tscale{t}}{\tscale{s}}\eps\abs{W_t}
 \le\frac54\eps\abs{W_t}.
\]
Subtracting \(W_t\) in \eqref{eq:W-backward},
\[
 \abs{W(s)-W_t}
 \le\frac{t-s}{\tscale{s}}\abs{W_t}
 +\frac54\eps\abs{W_t}
 \le\left(\frac14+\frac54\eps\right)\abs{W_t}.
\]
Since \(\eps<1/20\), the last coefficient is less than \(1/2\), which proves
\eqref{eq:W-stability}.

The ratios \(s/t\), \(\tscale{s}/\tscale{t}\), and
\(\tlog{s}/\tlog{t}\) are bounded above and below on the local interval.
Together with \eqref{eq:a-stability}--\eqref{eq:W-stability}, this makes each
of the five branches defining \(d(s,X(s),V(s))\) comparable with its
terminal value. Taking the minimum proves the first comparison in
\eqref{eq:d-r-stability}; the second then follows directly from the definition of \(r\).
\end{proof}

\begin{lemma}[Local source crossing]\label{lem:local-crossing}
Whenever \(d_t>0\),
\begin{equation}
 \int_{t-d_t}^{t}
 \frac{\one_{\cU}(s,X(s),V(s))}{\abs{Z(s)}^2}\dd s
 \le\frac CA\int_{t-d_t}^{t}
 \left(\tscale{s}^{-1}+\abs{W(s)}^2\right)\dd s. \label{eq:local-crossing}
\end{equation}
\end{lemma}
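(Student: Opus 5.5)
The approach is to apply the geometric crossing lemma, \cref{lem:geometric-crossing}, to the relative position \(Z(s)\) on the source interval \([t-d_t,t]\). On the set where \(\one_{\cU}=1\) the source point lies outside \(\cB\), so \(\abs{Z(s)}>r(s,X(s),V(s))\). By \cref{lem:source-stability}, this radius is comparable to \(r_t\), so \(\abs{Z(s)}\ge c\,r_t=:r_0\) there. If \(\one_{\cU}\) vanishes identically on the interval there is nothing to prove. Otherwise \(r_t>0\) automatically, since \(d_t>0\) and \(\abs{a_t}>0\); indeed \(d_t>0\) forces \(a_t\neq0\).

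We take \(h=d_t\) and note \(Z'=a\). The hypothesis of \cref{lem:geometric-crossing} requires \(\abs{a(s)-a_t}\le\eta\abs{a_t}\) with \(\eta<1/2\). This is exactly what the proof of \cref{lem:source-stability} gives: \(\abs{a(s)-a_t}\le2\eps\abs{a_t}\) with \(2\eps<1/10\), obtained from \cref{lem:rescaling} and \cref{lem:Delta-closed} applied to both the source and the target curves. With \(\Omega=\{s\in[t-d_t,t]:\one_{\cU}(s,X(s),V(s))=1\}\), which is measurable, the lemma yields
\[
 \int_{t-d_t}^t\frac{\one_{\cU}}{\abs{Z(s)}^2}\dd s\le\frac{C}{\abs{a_t}r_t}
 =\frac{C}{A}\bigl(\tscale{t}^{-1}+\abs{W_t}^2\bigr)d_t .
\]
Here we inserted the definition of \(r_t\).

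It remains to convert \(d_t(\tscale{t}^{-1}+\abs{W_t}^2)\) into the integral on the right of \eqref{eq:local-crossing}. By \eqref{eq:W-stability}, \(\abs{W(s)}\ge\frac12\abs{W_t}\) on \([t-d_t,t]\). Since \(t-s\le t/5\), we also have \(\tscale{s}^{-1}\ge\tscale{t}^{-1}\). The integrand \(\tscale{s}^{-1}+\abs{W(s)}^2\) is therefore bounded below by \(\frac14(\tscale{t}^{-1}+\abs{W_t}^2)\) throughout the interval. Integrating over an interval of length \(d_t\) gives the claim.

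The main point requiring care is the lower bound \(\abs{Z}\ge c\,r_t\) on \(\Omega\). Membership in \(\cU\) only says the point is in neither \(\cB\) nor \(\cG\), and \(\cB\) was defined using the pointwise radius \(r(s,\cdot)\) rather than \(r_t\). The uniform comparability \eqref{eq:d-r-stability} resolves this, with constants independent of the source and of \(t\). One must also check that \(\cU\) is restricted to the record interval; since \(\one_{\cU}\) is extended by zero outside it, \(\Omega\) is simply a subset of \([t-d_t,t]\), and \cref{lem:geometric-crossing} applies to any measurable subset. Everything else is a direct substitution.
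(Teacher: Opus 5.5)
Your proposal is correct and follows the paper's proof step by step. The steps are: the lower bound $\abs{Z}>r\ge c_r r_t$ on $\cU$ from \cref{lem:source-stability}; \cref{lem:geometric-crossing} applied to $Z'=a$ via \eqref{eq:a-stability}; the identity $1/(\abs{a_t}r_t)=d_t(\tscale{t}^{-1}+\abs{W_t}^2)/A$; and comparability of the last factor on $[t-d_t,t]$, which you spell out explicitly. In that last step, $s\le t$ alone gives $\tscale{s}^{-1}\ge\tscale{t}^{-1}$, so the $t/5$ condition is not needed there.
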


\begin{proof}
On \(\cU\),
\[
 \abs{Z(s)}>r(s,X(s),V(s)).
\]
By \cref{lem:source-stability}, this is bounded below by
\(c_rr_t\) for a fixed \(c_r>0\). Moreover,
\eqref{eq:a-stability} gives
\[
 \abs{a(s)-a_t}\le2\eps\abs{a_t}
\]
with \(2\eps<1/2\). Applying \cref{lem:geometric-crossing} to
\(Z'=a\),
\[
 \int_{t-d_t}^{t}
 \frac{\one_{\cU}(s,X(s),V(s))}{\abs{Z(s)}^2}\dd s
 \le\frac{C}{\abs{a_t}r_t}.
\]
By the definition of \(r_t\),
\[
 \frac1{\abs{a_t}r_t}
 =\frac{d_t}{A}
 \left(\tscale{t}^{-1}+\abs{W_t}^2\right).
\]
The last factor is comparable throughout \([t-d_t,t]\) by
\cref{lem:source-stability}, which proves \eqref{eq:local-crossing}.
\end{proof}

\begin{lemma}[Contribution of \(\cU\)]\label{lem:U}
One has
\begin{equation}
 \iiint_{\cU}\frac{\abs{w} f}{\abs{z}^2}
 \le C\delta\frac{Q}{A\tscale{T}}. \label{eq:U-estimate}
\end{equation}
\end{lemma}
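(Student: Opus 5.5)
The estimate will come from the local crossing bound \cref{lem:local-crossing}, converted into an integral over the whole record interval by Schaeffer's one-sided covering lemma \cref{lem:time-extension}, and then averaged over source characteristics by Liouville's theorem. Since $\abs{w}\le\sqrt2Q$ on the support by \eqref{eq:w-by-H}, it suffices to show
\[
 \iiint_{\cU}\frac{f}{\abs{z}^2}\dd s\dd x\dd v\le \frac{C\delta}{A\tscale{T}}.
\]
Write the integral in Lagrangian form. Let $(x,v)$ range over $\supp f(T)$ and let $(X(s),V(s))=(X(s;T,x,v),V(s;T,x,v))$ be the source characteristic. Measure preservation and \eqref{eq:f-characteristics} give
\[
 \iiint_{\cU}\frac{f}{\abs{z}^2}
 =\iint f(T,x,v)\int_{T-\delta}^T\frac{\one_{\cU}(s,X(s),V(s))}{\abs{Z(s)}^2}\dd s\dd x\dd v .
\]
Mass conservation then reduces everything to a uniform bound for one source:
\[
 \int_{T-\delta}^T\frac{\one_{\cU}(s,X(s),V(s))}{\abs{Z(s)}^2}\dd s\le \frac{C\delta}{A\tscale{T}} .
\]

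Fix a source characteristic and define
\[
 \alpha(s)=\one_{\cU}(s,X(s),V(s))\abs{Z(s)}^{-2},\qquad
 \beta(s)=\frac CA\bigl(\tscale{s}^{-1}+\abs{W(s)}^2\bigr),
\]
with $d(s)=d(s,X(s),V(s))$ for $0\le s\le T$. The branch $d_1=c_ds$ gives $d(s)\le s$, and continuity of $d$ follows from continuity of the flow. \cref{lem:local-crossing} is exactly the hypothesis of \cref{lem:time-extension} at every $s$ with $d(s)>0$. Where $d(s)=0$ we have $a=0$ or $W=0$, so the point lies in $\cG$ (since $\abs{a}\le P$ or $\abs{w}\le P$), not in $\cU$, and hence $\alpha(s)=0$; outside the record interval $\alpha=0$ by the convention $\one_\cU=0$. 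One technical point is integrability of $\alpha$. On $\cU$ we have $\abs{Z}\ge r>0$, but $r$ can degenerate. I would handle this by truncating $\alpha$ at a level $M$, noting that the covering lemma only uses the inequality on the intervals $[s-d(s),s]$, and then letting $M\to\infty$ by monotone convergence.

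Next, apply \cref{lem:time-extension} at $t=T$ with the given $\delta$; this needs $\delta\ge d(T)$. By \eqref{eq:d-global} with $C_w=\sqrt2$ and $\abs{W_T}\le\sqrt2 Q$ (from \eqref{eq:w-by-H}), we get $d(T)\le\min\{T/5,\cD(T,\kappa Q)\}=\delta$. This is precisely why $c_d$ was reduced in \cref{lem:rescaling}. The lemma then yields
\[
 \int_{T-\delta}^T\alpha\le \frac{2C}{A}\int_{T-\delta}^T\bigl(\tscale{s}^{-1}+\abs{W(s)}^2\bigr)\dd s .
\]

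The main obstacle is the $\abs{W(s)}^2$ term. Pointwise it is only bounded by $2Q^2$, which would give the useless bound $\delta Q^2/A$. The step that should work is to average before bounding: after the covering lemma, the bound for each source is an explicit integral, and integrating it against $f(T,x,v)$ and changing variables back gives
\[
 \frac CA\int_{T-\delta}^T\iint \bigl(\tscale{s}^{-1}+\abs{w}^2\bigr)f(s,x,v)\dd x\dd v\dd s
 \le \frac{C\delta}{A\tscale{T}} ,
\]
by mass conservation and the dispersive moment bound \eqref{eq:w2}, together with \eqref{eq:record-comparable-time}. Multiplying by the factor $\sqrt2Q$ from $\abs{w}$ then gives \eqref{eq:U-estimate}. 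So the order of operations matters: apply the covering lemma source by source, and only afterwards integrate over sources, so that the weight $\abs{W}^2$ is controlled in the average by \eqref{eq:w2} rather than pointwise by $Q^2$.
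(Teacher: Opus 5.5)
Your proposal is correct and follows the paper's proof essentially step for step. The shared steps are the Lagrangian rewriting via measure preservation, the same choice of $\alpha,\beta,d$, the check that $\alpha=0$ where $d=0$, the bound $d(T)\le\delta$ from \eqref{eq:d-global} with $C_w=\sqrt2$, the truncation $\min\{\alpha,N\}$ with monotone convergence, and averaging over sources so that \eqref{eq:w2} and mass conservation control $\tscale{s}^{-1}+\abs{W}^2$. The one thing to remove is your early claim that everything reduces to a uniform per-source bound $C\delta/(A\tscale{T})$. As you later note yourself, the crossing argument does not give this, and only the averaged bound is needed.
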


\begin{proof}
Fix a source characteristic ending at \((x,v)\in\supp f(T)\). Define
\[
 \alpha(s):=
 \frac{\one_{\cU}(s,X(s),V(s))}
 {\abs{X(s)-X_*(s)}^2},
 \qquad
 \beta(s):=\frac CA
 \left(\tscale{s}^{-1}+\abs{W(s)}^2\right),
\]
and
\[
 d(s):=d(s,X(s),V(s)).
\]
As usual, the quotient defining \(\alpha\) is extended by zero off
\(\cU\); in particular, it is assigned the value zero at collision times
that do not belong to \(\cU\).
The local estimate required in \cref{lem:time-extension} is
\eqref{eq:local-crossing}. If \(d(s)=0\), then either \(s=0\),
\(a(s)=0\), or \(W(s)=0\). On the record interval the last two alternatives
place the point in \(\cG\), while outside the record interval
\(\one_{\cU}=0\). Hence \(\alpha(s)=0\) whenever \(d(s)=0\).

At time \(T\), \eqref{eq:w-by-H} gives
\(\abs{W(T)}\le\sqrt2Q\). The choice of \(c_d\) fixed above uses
\(C_w=\sqrt2\) and the same \(\kappa\) as in
\eqref{eq:record-delta}. Hence
\[
 d(T,x,v)\le\min\left\{\frac T5,\cD(T,\kappa Q)\right\}=\delta.
\]
Strictly speaking, \(\alpha\) may be unbounded before the crossing estimate
is used. Apply \cref{lem:time-extension} first to
\(\alpha_N=\min\{\alpha,N\}\); the local inequality remains valid because
\(\alpha_N\le\alpha\). Letting \(N\to\infty\) by monotone convergence, we obtain
\begin{equation*}
 \int_{T-\delta}^{T}
 \frac{\one_{\cU}(s,X(s),V(s))}
 {\abs{X(s)-X_*(s)}^2}\dd s
 \le\frac CA\int_{T-\delta}^{T}
 \left(\tscale{s}^{-1}+\abs{W(s)}^2\right)\dd s.
\end{equation*}

Using phase-space measure preservation and
\eqref{eq:f-characteristics}, and recalling that
\(\abs{W}\le\sqrt2Q\) on the record interval,
\begin{align*}
 \iiint_{\cU}\frac{\abs{w} f}{\abs{z}^2}
 &\le CQ\iint f(T,x,v)
 \int_{T-\delta}^{T}
 \frac{\one_{\cU}(s,X(s),V(s))}
 {\abs{X(s)-X_*(s)}^2}\dd s\dd x\dd v\\
 &\le\frac{CQ}{A}
 \int_{T-\delta}^{T}\iint
 \left(\tscale{s}^{-1}+\abs{w(s,x,v)}^2\right)
 f(s,x,v)\dd x\dd v\dd s.
\end{align*}
Mass conservation and \eqref{eq:w2} imply that the inner integral is bounded by
\(C\tscale{s}^{-1}\). Using \eqref{eq:record-comparable-time} proves
\eqref{eq:U-estimate}.
\end{proof}

\begin{proof}[Proof of \cref{prop:local-current}]
Combine \eqref{eq:J-upper}, \cref{lem:G},
\eqref{eq:B-total}, and \cref{lem:U}; this gives
\eqref{eq:local-current} and completes the proof.
\end{proof}

\section{Closing the labeled energy and long-time consequences}\label{sec:conclusion}

The purpose of this section is to convert the local current estimate of
\cref{prop:local-current} into a global bound for the labeled-energy envelope.
The central conclusion is the following proposition.

\begin{proposition}[Growth of the labeled-energy envelope]\label{prop:H-growth}
For every \(t\ge0\),
\begin{equation}
 \cH(t)\le C\tscale{t}^{4/21}\tlog{t}^{8/21}. \label{eq:H-growth}
\end{equation}
\end{proposition}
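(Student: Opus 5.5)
The plan is to combine \cref{prop:local-current}, with an optimized choice of \(A\), and the differential inequality \eqref{eq:labeled-inequality} on a single record interval. Using the record property \(\LE_*(T-\delta)\le\cH(T)\), the record interval closes on itself, so no genuine iteration over many intervals should be needed. Fix \(t\ge0\). By \cref{lem:H-continuity} there is a latest time \(T\in[0,t]\) with \(\cH(t)=\cH(T)=\LE(T,x_*,v_*)\). If \(T<T_*\) or \(Q=\cH(T)^{1/2}<Q_*\), then \(\cH(t)\le C\) by \eqref{eq:H-baseline} (or trivially), and \eqref{eq:H-growth} holds. Since the right-hand side of \eqref{eq:H-growth} is increasing in \(t\) and \(T\le t\), it suffices to show \(Q^2\le C\tscale{T}^{4/21}\Tlog^{8/21}\) when \(T\ge T_*\) and \(Q\ge Q_*\).

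\emph{Step 1: optimize \(A\).} The two terms in \eqref{eq:local-current} that compete are \(A\,\Tlog^{4/7}\tscale{T}^{-5/7}\) and \(Q/(A\tscale{T})\). I choose
\[
 A:=Q^{1/2}\tscale{T}^{-1/7}\Tlog^{-2/7},
\]
which makes both equal to \(Q^{1/2}\tscale{T}^{-6/7}\Tlog^{2/7}\). The remaining terms should then be lower order, using the a priori bound \(Q\le C\tscale{T}^{2/15}\Tlog^{8/15}\) from \eqref{eq:H-baseline} (enlarging \(T_*\) if needed):
\begin{itemize}
\item \(Q/\tscale{T}\) relative to the main term is \(Q^{1/2}\tscale{T}^{-1/7}\), which is bounded since \(1/15<1/7\).
\item \(AQ/\tscale{T}\) relative to the main term is \(Q\tscale{T}^{-2/7}\Tlog^{-4/7}\), which is bounded since \(2/15<2/7\).
\item \(A\Tlog^2/(Q\tscale{T})\) is at most \(A\Tlog^{4/7}\tscale{T}^{-5/7}\) as soon as \(Q\ge\Tlog^{10/7}\tscale{T}^{-2/7}\), which holds for large \(T\) because \(Q\ge Q_*\).
\end{itemize}
This gives
\[
 \int_{T-\delta}^T\abs{\cJ_*(s)}\dd s\le C\delta\,Q^{1/2}\tscale{T}^{-6/7}\Tlog^{2/7}.
\]

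\emph{Step 2: integrate the labeled inequality.} Inequality \eqref{eq:labeled-inequality} is equivalent to \(\frac{\dd}{\dd s}\bigl(\tscale{s}\LE_*(s)\bigr)\le\tscale{s}\abs{\cJ_*(s)}\). I integrate this over \([T-\delta,T]\), using \(\LE_*(T)=Q^2\) and \(\LE_*(T-\delta)\le\cH(T-\delta)\le\cH(T)=Q^2\); the latter holds because the characteristic stays in the support. With \(\tscale{s}\asymp\tscale{T}\) from \eqref{eq:record-comparable-time}, this yields
\[
 \tscale{T}Q^2\le\tscale{T-\delta}Q^2+C\tscale{T}\delta\,Q^{1/2}\tscale{T}^{-6/7}\Tlog^{2/7}.
\]
Thus \(\delta Q^2\le C\delta\,Q^{1/2}\tscale{T}^{1/7}\Tlog^{2/7}\). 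Provided \(\delta>0\), dividing by \(\delta\) gives \(Q^{3/2}\le C\tscale{T}^{1/7}\Tlog^{2/7}\), that is,
\[
 Q^2\le C\tscale{T}^{4/21}\Tlog^{8/21}.
\]

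\emph{Step 3: positivity of \(\delta\), the main point to check.} The argument depends on \(\delta>0\). This follows from \cref{prop:Delta}, since \(\kappa Q>0\) and \(T>0\) give \(\cD(T,\kappa Q)>0\), and \(T/5>0\). The main obstacle is therefore the bookkeeping of the lower-order terms in Step 1. In particular, the term \(A\Tlog^2/(Q\tscale{T})\) needs a lower bound on \(Q\), and the thresholds \(T_*\) and \(Q_*\) may have to be enlarged, depending only on \(f_0\), so that every comparison holds uniformly. If the third comparison fails for moderate \(Q\), I would instead absorb that term directly: it contributes at most \(Q^2\lesssim\Tlog^{c}\), which is again compatible with \eqref{eq:H-growth}.
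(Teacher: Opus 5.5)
Your argument is correct. Step 1 is the paper's optimization of $A$ (\cref{prop:optimized-current}), but your closing step takes a genuinely different route. The paper integrates \eqref{eq:labeled-inequality} after \emph{discarding} the damping term $\LE_*/\tscale{t}$, which leaves only the increment bound $\cH(T_n)\le\cH(T_n-\delta_n)+C\delta_n\cH(T_n)^{1/4}g(T_n)$. It must then build a descending chain of latest record times with disjoint record intervals, prove termination via a uniform lower bound $\delta_n\ge c_\delta$, linearize with $x^{3/4}-y^{3/4}\le(x-y)x^{-1/4}$, and sum $\sum_n\delta_n g(T_n)\lesssim\int_1^t g$.

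You instead keep the damping through the integrating factor $\tscale{s}$. The record property $\LE_*(T-\delta)\le\cH(T)=\LE_*(T)$ turns the integrated damping into the lower bound $\delta Q^2$, which you compare directly with $\delta\,\tscale{T}\,Q^{1/2}\tscale{T}^{-6/7}\Tlog^{2/7}$, and $\delta$ cancels. So a single record interval suffices, and only $\delta>0$ is needed rather than a uniform lower bound or a termination argument.

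The two routes give the same exponent because $\int_1^t g\asymp\tscale{t}g(t)$ for $g(s)=\tscale{s}^{-6/7}\tlog{s}^{2/7}$. The factor $\tscale{T}$ supplied by the damping plays exactly the role of the paper's time summation. Your route is shorter. The paper's is more robust: it uses only $\frac{\dd}{\dd t}\LE_*\le\abs{\cJ_*}$, so it would survive without the dissipative term.

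Two minor points. The first ratio in Step 1 is $Q^{1/2}\tscale{T}^{-1/7}\Tlog^{-2/7}$, not $Q^{1/2}\tscale{T}^{-1/7}$; the missing factor only makes it smaller. The fallback in your last paragraph is unnecessary, since $Q\ge Q_*\ge1$ already bounds the third ratio by $\sup_T\Tlog^{10/7}\tscale{T}^{-2/7}<\infty$.
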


Once \cref{prop:H-growth} is known, the velocity-support and field estimates in
\cref{thm:main} follow directly. The remainder of the section proves this
proposition in two steps: we first optimize the free geometric parameter in
\cref{prop:local-current}, and then iterate the resulting estimate over
those record intervals. The short proof of \cref{thm:main} is given at the
end.

\subsection{Optimization of the geometric parameter}

\begin{proposition}[Optimized record-time estimate]\label{prop:optimized-current}
At every sufficiently large record time with \(Q\ge Q_*\), with \(\delta\)
defined by \eqref{eq:record-delta},
\begin{equation*}
 \int_{T-\delta}^{T}\abs{\cJ_*(s)}\dd s
 \le C\delta\cH(T)^{1/4}\tscale{T}^{-6/7}\Tlog^{2/7}.
\end{equation*}
\end{proposition}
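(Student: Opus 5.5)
The plan is to apply \cref{prop:local-current} with one specific choice of the free parameter \(A>0\), chosen to balance the two dominant terms. Then I check that every remaining term in \eqref{eq:local-current} is bounded by the same target quantity. For this check I use the a priori bound \eqref{eq:H-baseline} together with \(Q\ge Q_*\ge1\) and \(T\ge T_*\).

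Write \(\tau=\tscale{T}\) and \(L=\Tlog\). Since \(\cH(T)^{1/4}=Q^{1/2}\), the target bound is \(C\delta Q^{1/2}\tau^{-6/7}L^{2/7}\). The two terms to balance are \(A L^{4/7}\tau^{-5/7}\) and \(Q/(A\tau)\). Equating them gives \(A^2=Q\tau^{-2/7}L^{-4/7}\), so I take
\[
 A:=Q^{1/2}\tau^{-1/7}L^{-2/7}.
\]
With this choice each of the two balanced terms equals \(Q^{1/2}\tau^{-6/7}L^{2/7}\), which is exactly the claimed rate.

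The three remaining terms are compared with \(Q^{1/2}\tau^{-6/7}L^{2/7}\) one at a time.
\begin{itemize}
\item The term \(Q/\tau\) is admissible iff \(Q^{1/2}\le C\tau^{1/7}L^{2/7}\). By \eqref{eq:H-baseline}, \(Q^{1/2}\le C\tau^{1/15}L^{4/15}\), and \(\tau^{1/15}L^{4/15}\le C\tau^{1/7}L^{2/7}\) because the power gap \(1/7-1/15>0\) absorbs the log discrepancy.
\item The term \(AQ/\tau=Q^{3/2}\tau^{-8/7}L^{-2/7}\) is admissible iff \(Q\le C\tau^{2/7}L^{4/7}\). This follows from \(Q\le C\tau^{2/15}L^{8/15}\) and \(2/15<2/7\).
\item The term \(AL^2/(Q\tau)=Q^{-1/2}\tau^{-8/7}L^{12/7}\) is admissible iff \(Q^{-1}\le C\tau^{2/7}L^{-10/7}\). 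Here \(Q\ge1\), and \(\tau^{2/7}L^{-10/7}\) is bounded below by a positive constant for \(T\ge T_*\), since it tends to infinity and is continuous and positive.
\end{itemize}
Summing all the terms then gives the claimed estimate with a constant depending only on \(f_0\).

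No real obstacle is expected beyond bookkeeping. The only point needing care is that the lower-order terms require the a priori envelope bound \eqref{eq:H-baseline}: this is the one place where the Chen--Li exponent \(2/15\) enters, and it must be strictly below the exponents \(2/7\) and \(1/7\) that arise. One should also confirm that the resulting constants are uniform over all record times \(T\ge T_*\), after enlarging \(T_*\) if necessary, and that the chosen \(A\) is admissible: \cref{prop:local-current} holds for every \(A>0\), so no further restriction on \(A\) is needed.
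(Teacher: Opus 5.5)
Your proposal is correct and follows the paper's proof. It uses the same choice \(A=Q^{1/2}\tscale{T}^{-1/7}\Tlog^{-2/7}\) to balance the near-region term against the crossing term, and the same three ratio checks: \eqref{eq:H-baseline} handles the first two, and \(Q\ge Q_*\ge1\) together with the growth of \(\tscale{T}^{2/7}\Tlog^{-10/7}\) handles the third.
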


\begin{proof}
Choose
\[
 A:=Q^{1/2}\tscale{T}^{-1/7}\Tlog^{-2/7}.
\]
Then the two dominant terms balance:
\begin{equation}
 A\frac{\Tlog^{4/7}}{\tscale{T}^{5/7}}
 =\frac{Q}{A\tscale{T}}
 =Q^{1/2}\tscale{T}^{-6/7}\Tlog^{2/7}. \label{eq:dominant-balance}
\end{equation}
Divide the remaining three terms in \eqref{eq:local-current} by the quantity
on the right-hand side of \eqref{eq:dominant-balance}. The ratios are
\begin{equation}\label{eq:lower-order-ratios}
\begin{aligned}
 Q^{1/2}\tscale{T}^{-1/7}\Tlog^{-2/7},
 \qquad&
 Q\tscale{T}^{-2/7}\Tlog^{-4/7},
 \\
 &Q^{-1}\tscale{T}^{-2/7}\Tlog^{10/7}.
\end{aligned}
\end{equation}
By \eqref{eq:H-baseline},
\[
 Q\le C\tscale{T}^{2/15}\Tlog^{8/15}.
\]
Hence the first two ratios in \eqref{eq:lower-order-ratios} are bounded, and in fact tend to zero:
\[
 Q^{1/2}\tscale{T}^{-1/7}\Tlog^{-2/7}
 \le C\tscale{T}^{-8/105}\Tlog^{-2/105},
\]
\[
 Q\tscale{T}^{-2/7}\Tlog^{-4/7}
 \le C\tscale{T}^{-16/105}\Tlog^{-4/105}.
\]
The third ratio is bounded by
\(Q_*^{-1}\tscale{T}^{-2/7}\Tlog^{10/7}\), which also tends to zero. Since
\(Q^{1/2}=\cH(T)^{1/4}\), the result follows.
\end{proof}

\begin{proof}[Proof of \cref{prop:H-growth}]

We use the concavity inequality
\begin{equation}
 x^{3/4}-y^{3/4}
 \le (x-y)x^{-1/4},
 \qquad x\ge y\ge0,
 \quad x>0. \label{eq:concavity}
\end{equation}

Fix \(t\ge0\). By \cref{lem:H-continuity}, choose the latest record time
\(T_0\le t\) and \((x_0,v_0)\in\supp f(T_0)\) such that
\[
 \LE(T_0,x_0,v_0)=\cH(T_0)=\cH(t).
\]
If either \(T_0<T_*\) or \(\cH(T_0)<Q_*^2\), then
\(\cH(t)\) is bounded by a constant depending only on the initial datum, and
the conclusion follows. We therefore assume that both thresholds are
exceeded.

Let \((X_*,V_*)\) be the characteristic passing through
\((x_0,v_0)\) at time \(T_0\), and let \(\delta_0\) be given by
\eqref{eq:record-delta}. Integrating
\eqref{eq:labeled-inequality} along this characteristic and dropping the nonnegative damping term,
\[
 \cH(T_0)=\LE_*(T_0)
 \le\LE_*(T_0-\delta_0)
 +\int_{T_0-\delta_0}^{T_0}\abs{\cJ_*(s)}\dd s.
\]
The first term is bounded by
\[
 \LE_*(T_0-\delta_0)
 \le\cH(T_0-\delta_0).
\]
By \cref{prop:optimized-current},
\begin{equation*}
 \cH(T_0)
 \le\cH(T_0-\delta_0)
 +C\delta_0\cH(T_0)^{1/4}
 \tscale{T_0}^{-6/7}\tlog{T_0}^{2/7}.
\end{equation*}

Proceed recursively. Once \(T_n\) is chosen, set
\(S_n=T_n-\delta_n\), and let
\(T_{n+1}\le S_n\) be the latest record time given by
\cref{lem:H-continuity}; thus, for some
\((x_{n+1},v_{n+1})\in\supp f(T_{n+1})\),
\[
 \LE(T_{n+1},x_{n+1},v_{n+1})
 =\cH(T_{n+1})=\cH(S_n).
\]
Put
\[
 M_n:=\cH(T_n),
 \qquad
 g(s):=\tscale{s}^{-6/7}\tlog{s}^{2/7}.
\]
As long as \(T_n\ge T_*\) and \(M_n\ge Q_*^2\), the preceding argument gives
\begin{equation}
 M_n\le M_{n+1}+C\delta_nM_n^{1/4}g(T_n). \label{eq:record-recursion}
\end{equation}
The interiors of record intervals
\([T_n-\delta_n,T_n]\) are pairwise disjoint because
\(T_{n+1}\le T_n-\delta_n\).

We also verify that the construction terminates after finitely many steps. From
\eqref{eq:Delta-lower}, for \(T_n\ge T_*\) and
\(M_n^{1/2}\ge Q_*\),
\begin{align*}
 \delta_n
 &\ge c\min\left\{
 T_n,
 \frac{M_n\tscale{T_n}}{\tlog{T_n}},
 \frac{M_n^{1/2}\tscale{T_n}^{5/7}}
 {\tlog{T_n}^{4/7}}
 \right\}\\
 &\ge c_\delta>0,
\end{align*}
where \(c_\delta\) depends only on the fixed thresholds. Since these intervals have pairwise disjoint interiors and are contained in $[0,t]$, only finitely many such
steps are possible before either the time or the energy enters the bounded
regime.

Since \(T_{n+1}\le T_n\) and \(\cH\) is nondecreasing, one has
\(M_{n+1}\le M_n\). We may therefore apply \eqref{eq:concavity} to
\eqref{eq:record-recursion}:
\begin{equation}
 M_n^{3/4}-M_{n+1}^{3/4}
 \le C\delta_ng(T_n). \label{eq:concave-step}
\end{equation}
Because \(\delta_n\le T_n/5\), one has
\(g(s)\asymp g(T_n)\) on
\([T_n-\delta_n,T_n]\). Summing
\eqref{eq:concave-step} over these record intervals and absorbing the bounded terminal value gives
\begin{align*}
 \cH(t)^{3/4}
 &\le C+C\sum_n\delta_ng(T_n)\\
 &\le C+C\int_1^t\tscale{s}^{-6/7}\tlog{s}^{2/7}\dd s\\
 &\le C\tscale{t}^{1/7}\tlog{t}^{2/7}.
\end{align*}
Raising to the power \(4/3\) proves \eqref{eq:H-growth}.
\end{proof}

\subsection{Velocity support and field decay}

\begin{proof}[Proof of \cref{thm:main}]
By the definitions of \(\LE\) and \(\cH\),
\[
 \frac12\theta(t)^2
 \le\max_{(x,v)\in\supp f(t)}\LE(t,x,v)
 \le\cH(t).
\]
Using \cref{prop:H-growth},
\begin{equation}
 \theta(t)\le C\tscale{t}^{2/21}\tlog{t}^{4/21}. \label{eq:theta-final}
\end{equation}
Along every characteristic,
\begin{equation*}
 \frac{\dd}{\dd t}\left(\frac{X(t)}{\tscale{t}}\right)
 =\frac1{\tscale{t}}
 \left(V(t)-\frac{X(t)}{\tscale{t}}\right).
\end{equation*}
The compact support of the initial datum and
\eqref{eq:theta-final} therefore imply
\begin{align*}
 \frac{\abs{X(t)}}{\tscale{t}}
 &\le C+C\int_0^t
 \tscale{s}^{-1+2/21}\tlog{s}^{4/21}\dd s\\
 &\le C\tscale{t}^{2/21}\tlog{t}^{4/21}.
\end{align*}
Consequently,
\[
 \abs{V(t)}
 \le\abs{V(t)-\frac{X(t)}{\tscale{t}}}
 +\frac{\abs{X(t)}}{\tscale{t}}
 \le C\tscale{t}^{2/21}\tlog{t}^{4/21}.
\]
Taking the supremum over the support proves
\eqref{eq:main-bound}.

Finally, the compact velocity support and the \(L^\infty\) bound for \(f\) give
\begin{equation}
 \norm{\rho(t)}_{L^\infty}\le CR_f(t)^3. \label{eq:rho-infty}
\end{equation}
Applying \cref{lem:coulomb-interpolation} with \(p=5/3\), and then using
\eqref{eq:rho53} and \eqref{eq:rho-infty},
\begin{align*}
 \norm{E(t)}_{L^\infty}
 &\le C\norm{\rho(t)}_{L^{5/3}}^{5/9}
 \norm{\rho(t)}_{L^\infty}^{4/9}\\
 &\le C\tscale{t}^{-1/3}R_f(t)^{4/3}.
\end{align*}
Substituting \eqref{eq:main-bound} and observing that
\[
 -\frac13+\frac43\frac{2}{21}=-\frac{13}{63},
 \qquad
 \frac43\frac{4}{21}=\frac{16}{63},
\]
we obtain \eqref{eq:field-decay}.
\end{proof}

\begin{remark}[Origin of the logarithmic exponent]\label{rem:log}
If one first bounds \(\abs{w}\) by \(CQ\) in the \(\cB_5\) estimate, the
remaining unweighted radial integral introduces an additional logarithm. In
\cref{lem:B5}, the factor \(\abs{w}\) is kept inside the integral and cancels
the factor \(\abs{w}^{-1}\) in the branch radius. The resulting radial
integral is of size \(Q\), which cancels the prefactor \(Q^{-1}\). This is why
the final logarithmic exponent is \(4/21\). The algebraic exponent \(2/21\)
is determined by the balance between the
\(\tscale{T}^{-5/7}\) near-region term and the \(\tscale{T}^{-1}\) crossing term.
\end{remark}

\begin{remark}[Logical dependencies]\label{rem:dependencies}
The proof uses three substantive estimates from the earlier theory: the
dispersive bounds \eqref{eq:w2}--\eqref{eq:rho53}, the velocity-support estimate \eqref{eq:chen-li-support}, and the adaptive-time
lower bound \eqref{eq:Delta-lower}. It also follows the
Chen--Li/Schaeffer characteristic-crossing architecture, including the
one-sided extension lemma \cite{ChenLi,SchaefferAsymptotic}. The ingredients
specific to the present argument
are the Coulomb-corrected labeled-energy identity
\eqref{eq:labeled-identity} and its weighted implementation: the factor
\(w\) is retained in the near-region integrations, the target is selected
through the running envelope, and the resulting estimate is closed by the
record-time iteration.
\end{remark}

\paragraph{Data/Code Availability.}
The manuscript has no associated data or code.

\paragraph{Conflict of interest.}
The author declares that he has no conflict of interest.


\begin{thebibliography}{99}

\bibitem{ChaeHa}
M.~S. Chae and S.-Y. Ha,
\newblock New Lyapunov functionals of the Vlasov--Poisson system,
\newblock \emph{SIAM J. Math. Anal.} \textbf{37} (2006), 1709--1731.

\bibitem{ChenLi}
Z.~Chen and X.~Li,
\newblock Asymptotic growth of support and uniform decay of moments for the
Vlasov--Poisson system,
\newblock \emph{SIAM J. Math. Anal.} \textbf{50} (2018), 4180--4202.

\bibitem{ChenZhang}
Z.~Chen and X.~Zhang,
\newblock Sub-linear estimate of large velocities in a collisionless plasma,
\newblock \emph{Commun. Math. Sci.} \textbf{12} (2014), 279--291.

\bibitem{ChenZhangMoments}
Z.~Chen and X.~Zhang,
\newblock Global existence to the Vlasov--Poisson system and propagation of
moments without assumption of finite kinetic energy,
\newblock \emph{Comm. Math. Phys.} \textbf{343} (2016), 851--879.

\bibitem{Horst}
E.~Horst,
\newblock On the asymptotic growth of the solutions of the Vlasov--Poisson
system,
\newblock \emph{Math. Methods Appl. Sci.} \textbf{16} (1993), 75--85.

\bibitem{IllnerRein}
R.~Illner and G.~Rein,
\newblock Time decay of the solutions of the Vlasov--Poisson system in the
plasma physical case,
\newblock \emph{Math. Methods Appl. Sci.} \textbf{19} (1996), 1409--1413.

\bibitem{LionsPerthame}
P.-L.~Lions and B.~Perthame,
\newblock Propagation of moments and regularity for the three-dimensional
Vlasov--Poisson system,
\newblock \emph{Invent. Math.} \textbf{105} (1991), 415--430.

\bibitem{PallardNote}
C.~Pallard,
\newblock A note on the growth of velocities in a collisionless plasma,
\newblock \emph{Math. Methods Appl. Sci.} \textbf{34} (2011), 803--806.

\bibitem{PallardKRM}
C.~Pallard,
\newblock Growth estimates and uniform decay for a collisionless plasma,
\newblock \emph{Kinet. Relat. Models} \textbf{4} (2011), 549--567.

\bibitem{PallardJDE}
C.~Pallard,
\newblock Large velocities in a collisionless plasma,
\newblock \emph{J. Differential Equations} \textbf{252} (2012), 2864--2876.

\bibitem{Perthame}
B.~Perthame,
\newblock Time decay, propagation of low moments and dispersive effects for
kinetic equations,
\newblock \emph{Comm. Partial Differential Equations} \textbf{21} (1996),
659--686.

\bibitem{Pfaffelmoser}
K.~Pfaffelmoser,
\newblock Global classical solutions of the Vlasov--Poisson system in three
dimensions for general initial data,
\newblock \emph{J. Differential Equations} \textbf{95} (1992), 281--303.

\bibitem{ReinGrowth}
G.~Rein,
\newblock Growth estimates for the solutions of the Vlasov--Poisson system
in the plasma physics case,
\newblock \emph{Math. Nachr.} \textbf{191} (1998), 269--278.

\bibitem{ReinSurvey}
G.~Rein,
\newblock Collisionless kinetic equations from astrophysics---the
Vlasov--Poisson system,
\newblock in: C.~M. Dafermos and E. Feireisl (eds.),
\emph{Handbook of Differential Equations: Evolutionary Equations}, Vol.~3,
Elsevier/North-Holland, Amsterdam, 2007, pp.~383--476.

\bibitem{SchaefferGlobal}
J.~Schaeffer,
\newblock Global existence of smooth solutions to the Vlasov--Poisson system
in three dimensions,
\newblock \emph{Comm. Partial Differential Equations} \textbf{16} (1991),
1313--1335.

\bibitem{SchaefferAsymptotic}
J.~Schaeffer,
\newblock Asymptotic growth bounds for the Vlasov--Poisson system,
\newblock \emph{Math. Methods Appl. Sci.} \textbf{34} (2011), 262--277.

\end{thebibliography}
\end{document}